\newtheorem{theorem}{Theorem}[section]
\newtheorem{lemma}[theorem]{Lemma}
\newtheorem{proposition}[theorem]{Proposition}
\newtheorem{corollary}[theorem]{Corollary}
\newtheorem{definition}[theorem]{Definition}
\newtheorem{observation}[theorem]{Observation}
\newtheorem{rmks}[theorem]{Remarks}
\renewcommand{\leq}{\leqslant}
\renewcommand{\geq}{\geqslant}
\renewcommand*\env@matrix[1][\arraystretch]{%
\edef\arraystretch{#1}%
\hskip -\arraycolsep
\let\@ifnextchar\new@ifnextchar
\array{*\c@MaxMatrixCols c}}
\date{\today}
\title{The H\"older exponent of Anosov limit maps}
\date{\today}
\author{Konstantinos Tsouvalas}
\begin{document}

\frenchspacing

\maketitle

\begin{abstract} Let $\Gamma$ be a non-elementary hyperbolic group and $d_{a}, a>1,$ a visual metric on its Gromov boundary $\partial_{\infty}\Gamma$. For a $1$-Anosov representation $\rho:\Gamma \rightarrow \mathsf{GL}_{d}(\mathbb{K})$, where $\mathbb{K}=\mathbb{R}$ or $\mathbb{C}$, we calculate the H\"older exponent of the Anosov limit map $\xi_{\rho}^1:\big(\partial_{\infty}\Gamma, d_{a}\big)\rightarrow \big(\mathbb{P}(\mathbb{K}^d),d_{\mathbb{P}}\big)$ of $\rho$ in terms of the moduli of eigenvalues of elements in $\rho(\Gamma)$ and the stable translation length on $\Gamma$. If $\xi_{\rho}^1(\partial_{\infty}\Gamma)$ spans $\mathbb{K}^d$ and $\rho$ is either irreducible or $\{1,2\}$-Anosov, then we prove that $\xi_{\rho}^1$ attains its H\"older exponent. We also provide an analogous calculation for the exponent of the inverse of the Anosov limit map of $(1,1,2)$-hyperconvex representations, including the class of Hitchin representations. Finally, we exhibit examples of (non semisimple) $1$-Anosov representations of surface groups in $\mathsf{SL}_4(\mathbb{R})$ whose Anosov limit map in $\mathbb{P}(\mathbb{R}^4)$ does not attain its H\"older exponent.\end{abstract}

\section{Introduction}
Anosov representations form a rich and stable class of discrete subgroups of linear reductive Lie groups with special dynamical properties, introduced by Labourie \cite{Labourie} in his work on the Hitchin component. Labourie's dynamical definition was further generalized by Guichard--Wienhard \cite{GW} for arbitrary word hyperbolic groups. Anosov representations have been extensively studied by Kapovich--Leeb--Porti \cite{KLP2}, Gu\'eritaud--Guichard--Kassel--Wienhard \cite{GGKW}, Bochi--Potrie--Sambarino \cite{BPS} and others, and today are recognized as the correct higher rank analogue of convex cocompact subgroups of rank one Lie groups.

Let $\mathbb{K}=\mathbb{R}$ or $\mathbb{C}$. The definition of an Anosov representation into a reductive Lie group $G$ is defined with respect to a pair of opposite parabolic subgroups $(P^{+},P^{-})$ of $G$. Every $P^{\pm}$-Anosov representation $\rho:\Gamma\rightarrow G$ of a word hyperbolic group $\Gamma$ admits a unique pair of $\rho$-equivariant bi-H\"older continuous embeddings of the Gromov boundary of $\Gamma$ (denoted by $\partial_{\infty}\Gamma$) into the homogeneous spaces $G/P^{+}$ and $G/P^{-}$, called the {\em Anosov limit maps of $\rho$}. An Anosov representation into $\mathsf{GL}_{d}(\mathbb{K})$ is called $1$-{\em Anosov} (or {\em projective Anosov}) if it is Anosov with respect to the pair of opposite parabolic subgroups, defined as the stabilizer of a line and a complementary hyperplane in $\mathbb{K}^d$. Guichard--Wienhard (see \cite[Prop. 4.3]{GW}) showed that for every \hbox{$P^{\pm}$-Anosov} representation \hbox{$\rho:\Gamma \rightarrow G$}, there exists $d\in \mathbb{N}$, depending on $G$ and $P^{\pm}$, and a continuous representation $\tau:G \rightarrow \mathsf{GL}_{d}(\mathbb{K})$ such that the representation $\tau\circ \rho:\Gamma \rightarrow \mathsf{GL}_{d}(\mathbb{K})$ is  1-Anosov. The Anosov limit maps of $\tau \circ \rho$ are obtained by composing the limit maps of $\rho$ with the \hbox{$\tau$-equivariant} generalized Pl\"ucker embeddings of $G/P^{+}$ into the projective space $\mathbb{P}(\mathbb{K}^d)$ and of $G/P^{-}$ into the Grassmannian of codimension one hyperplanes $\mathsf{Gr}_{d-1}(\mathbb{K}^d)$. Thus, from this perspective, the class of 1-Anosov representations is the most general class of Anosov representations into real reductive Lie groups.

 The H\"older exponent of a continuous map between two metric spaces is defined as follows. \begin{definition} \label{defexp} Let $(Z,\mathsf{d})$ and $(Z',\mathsf{d}')$ be metric spaces and $f:(Z,\mathsf{d}) \rightarrow (Z',\mathsf{d}')$ a H\"older continuous map. The H\"older exponent of $f$, denoted by $\alpha_{f}(\mathsf{d},\mathsf{d}')$, is defined as follows: $$\alpha_{f}(\mathsf{d},\mathsf{d}'):=\sup \big \{ \alpha>0 \ |\ \exists \  C>0: \mathsf{d}'(f(x),f(y)) \leqslant C \mathsf{d}(x,y)^{\alpha} \ \forall \ x,y \in Z \big\}.$$ \end{definition}

 In this paper, we establish an explicit formula for the H\"older exponent of the Anosov limit maps of a $1$-Anosov representation in terms of the moduli of the eigenvalues of elements in its image and a fixed visual metric on the Gromov boundary of $\Gamma$ (see Theorem \ref{exponent2}). In addition, we also obtain similar formulas for the inverse of the $1$-Anosov limit map of a $(1,1,2)$-hyperconvex representation, as well as we calculate the optimal H\"older exponent for the map conjugating the action of two $(1,1,2)$-hyperconvex representations on their limit sets respectively (see Theorem \ref{conjugation}). Before we state the main results, let us make the following necessary conventions. Throughout this paper, $\Gamma$ denotes a non-elementary hyperbolic group equipped with a $\Gamma$-{\em model space}, i.e. a proper geodesic Gromov hyperbolic space $(X,d_X)$ on which $\Gamma$ acts isometrically, properly discontinuously and cocompactly. Fix $x_0\in X$ and denote by $|\cdot|_{X}:\Gamma \rightarrow \mathbb{N}$ the associated length function defined as $|\gamma|_{X}=d_X(\gamma x_0,x_0)$, $\gamma \in \Gamma$. The {\em stable translation length of $\gamma \in \Gamma$} acting on $X$ is $$|\gamma|_{X,\infty}=\lim_{n\rightarrow \infty}\frac{|\gamma^n|_{X}}{n}.$$ Consider the Gromov product $(\ \ \cdot \ \ )_{x_0}:\partial_{\infty}X \times \partial_{\infty}X \rightarrow [0,\infty]$ and equip $\partial_{\infty}X$ with a visual metric $d_a:\partial_{\infty}X \times \partial_{\infty}X \rightarrow [0,\infty)$, $a>1$, i.e. such that there is $r>1$ (see \cite{Gromov}) with $$r^{-1}a^{-(x\cdot y)_{x_0}}\leq d_a(x,y) \leq r a^{-(x \cdot y)_{x_0}} \ \forall  x,y \in \partial_{\infty}X.$$ 

Let $\mathbb{K}=\mathbb{R}$ or $\mathbb{C}$. For $w \in \mathsf{GL}_d(\mathbb{K})$ denote by $\sigma_1(w)\geq \ldots \geq \sigma_d(w)$ (resp. $\ell_1(w)\geq \ldots \geq \ell_d(w)$) the singular values (resp. moduli of the eigenvalues) of the matrix $w$ in non-increasing order. To simplify notation set $\frac{\sigma_{i}}{\sigma_{j}}(w):=\frac{\sigma_i(w)}{\sigma_{j}(w)}$ and $\frac{\ell_{i}}{\ell_{j}}(w):=\frac{\ell_i(w)}{\ell_{j}(w)}$, $1\leq i ,j \leq d$. The projective space $\mathbb{P}(\mathbb{K}^d)$ is equipped with the metric $d_{\mathbb{P}}$ given by the formula $$d_{\mathbb{P}}\big([v_1],[v_2]\big)=\cos^{-1}\Bigg(\frac{|\langle v_1,v_2\rangle|}{||v_1||\cdot||v_2||}\Bigg) \ \ v_1,v_2\in \mathbb{K}^d\smallsetminus \{{\bf 0}\}.$$

For the precise definition of an Anosov representation into $\mathsf{GL}_d(\mathbb{K})$ see subsection \ref{definitionAnosov}. We recall that a representation $\rho:\Gamma \rightarrow \mathsf{GL}_d(\mathbb{K})$ is called $1$-Anosov if there exist $R,\varepsilon>0$ such that $$\frac{\sigma_1(\rho(\gamma))}{\sigma_2(\rho(\gamma))}\geq e^{\varepsilon|\gamma|_X-R} , \  \ \forall \ \gamma \in \Gamma.$$

Denote by $\Gamma_{\infty}$ the set of infinite order elements of $\Gamma$. Our first main result is a formula for the H\"older exponent of the limit map of a $1$-Anosov representation in terms of the gap between the first and second moduli of eigenvalues of elements in $\rho(\Gamma)$.

\begin{theorem} \label{exponent} Let $\rho:\Gamma \rightarrow \mathsf{GL}_d(\mathbb{K})$ be a $1$-Anosov representation with Anosov limit map \hbox{$\xi_{\rho}^{1}:(\partial_{\infty}X, d_{a}) \rightarrow (\mathbb{P}(\mathbb{K}^d),d_{\mathbb{P}})$}. Let $V:=\langle \xi_{\rho}^1(\partial_{\infty}\Gamma)\rangle$ be the vector subspace spanned by the image of $\xi_{\rho}^1$ and $\rho|_{V}$ the restriction of $\rho$ on $V$. The map $\xi_{\rho}^1$ is H\"older continuous and $$\alpha_{\xi_{\rho}^{1}}(d_{a},d_{\mathbb{P}})=\frac{1}{\log a} \inf_{\gamma \in \Gamma_{\infty}} \frac{\log\ell_1(\rho(\gamma))-\log\ell_2(\rho|_{V}(\gamma))}{|\gamma|_{X,\infty}}.$$ Moreover, if $\rho$ is either irreducible or $2$-Anosov, then $\xi_{\rho}^1$ is $\alpha_{\xi_{\rho}^{1}}(d_{a},d_{\mathbb{P}})$-H\"older. \end{theorem}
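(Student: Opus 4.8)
The plan is to prove the equality $\alpha_{\xi_{\rho}^{1}}(d_a,d_{\mathbb{P}})=A$, where $A:=\frac{1}{\log a}\inf_{\gamma\in\Gamma_{\infty}}\frac{\log\frac{\ell_1}{\ell_2}(\rho(\gamma))}{|\gamma|_{X,\infty}}$, by establishing the two inequalities separately and then analysing when the supremum in Definition \ref{defexp} is attained. For $\alpha_{\xi_{\rho}^{1}}\le A$ I would test the H\"older condition against orbits of loxodromic elements. Fix $\gamma\in\Gamma_{\infty}$ with attracting and repelling points $\gamma^{\pm}\in\partial_{\infty}X$; as $\rho$ is $1$-Anosov, $\rho(\gamma)$ is proximal, $\ell_1(\rho(\gamma))>\ell_2(\rho(\gamma))$, and $\xi_{\rho}^{1}(\gamma^+)$ is its attracting line. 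Since $\xi_{\rho}^{1}$ is spanning and injective, choose $q\in\partial_{\infty}X\smallsetminus\{\gamma^+,\gamma^-\}$ with $\xi_{\rho}^{1}(q)$ outside the repelling hyperplane $\xi_{\rho}^{d-1}(\gamma^-)$ and outside the proper $\rho(\gamma)$-invariant subspace spanned by the attracting line and the generalised eigenspaces of modulus $<\ell_2(\rho(\gamma))$; then, up to polynomial factors in $n$ coming from Jordan blocks, $d_{\mathbb{P}}\big(\rho(\gamma)^{n}\xi_{\rho}^{1}(q),\xi_{\rho}^{1}(\gamma^+)\big)\asymp \big(\tfrac{\ell_2}{\ell_1}(\rho(\gamma))\big)^{n}$, while $\gamma^{n}q\to\gamma^+$ with $(\gamma^{n}q\cdot\gamma^+)_{x_0}=n|\gamma|_{X,\infty}+O(1)$, so $d_a(\gamma^{n}q,\gamma^+)\asymp a^{-n|\gamma|_{X,\infty}}$. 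Feeding $x=\gamma^{n}q$, $y=\gamma^+$ into Definition \ref{defexp} and letting $n\to\infty$ yields $\alpha_{\xi_{\rho}^{1}}(d_a,d_{\mathbb{P}})\le \frac{1}{\log a}\frac{\log\frac{\ell_1}{\ell_2}(\rho(\gamma))}{|\gamma|_{X,\infty}}$, and taking the infimum over $\gamma\in\Gamma_{\infty}$ gives $\alpha_{\xi_{\rho}^{1}}\le A$.

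For the reverse inequality I would combine a distortion estimate with a periodic approximation. Given $x\ne y$, put $t:=(x\cdot y)_{x_0}$ and let $\gamma\in\Gamma$ follow the common initial segment of $[x_0,x)$ and $[x_0,y)$, so $|\gamma|_X=t+O(1)$. Telescoping the Anosov contraction estimate $d_{\mathbb{P}}(U^{+}(\rho(\gamma')),U^{+}(\rho(\gamma' s)))\lesssim \tfrac{\sigma_2}{\sigma_1}(\rho(\gamma'))$ (where $U^{+}$ denotes the most expanded line) along the rays towards $x$ and towards $y$, and summing the geometric series afforded by the dominated splitting, gives the crude bound $d_{\mathbb{P}}(\xi_{\rho}^{1}(x),\xi_{\rho}^{1}(y))\lesssim \tfrac{\sigma_2}{\sigma_1}(\rho(\gamma))$ — which only produces the exponent governed by the first Lyapunov gap. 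The technical heart is to sharpen this: $\xi_{\rho}^{1}(x)$ and $\xi_{\rho}^{1}(y)$ leave $U^{+}(\rho(\gamma))$ in a correlated way, so the relevant contraction of $\rho(\gamma)$ is the rate of its restriction to the directions occupied by $\xi_{\rho}^{1}(\partial_{\infty}X)$, not the worst-case rate. Approximating the common geodesic direction by axes of loxodromic elements — using minimality of the $\Gamma$-action on $\partial_{\infty}X$ and closing-type arguments in the coarse geometry of $X$ — one bounds this refined rate below by $e^{-(1+o(1))\,t\,A\log a}$, because along each period of a tracking axis $\mathrm{Ax}(\eta)$ the contraction transverse to $\xi_{\rho}^{1}(\eta^+)$ is controlled by $\tfrac{\ell_2}{\ell_1}(\rho(\eta))$ and $\tfrac{\log\frac{\ell_1}{\ell_2}(\rho(\eta))}{|\eta|_{X,\infty}}\ge A\log a$. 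With $r^{-1}a^{-(x\cdot y)_{x_0}}\le d_a(x,y)$ this gives $d_{\mathbb{P}}(\xi_{\rho}^{1}(x),\xi_{\rho}^{1}(y))\lesssim_{\varepsilon} d_a(x,y)^{A-\varepsilon}$ for all $\varepsilon>0$, hence $\alpha_{\xi_{\rho}^{1}}\ge A$; H\"older continuity of $\xi_{\rho}^{1}$ follows a fortiori (and is in any case part of the Anosov package).

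Attainment amounts to upgrading the last estimate from $d_a^{\,A-\varepsilon}$ to $d_a^{\,A}$, i.e.\ to running the periodic approximation with an $O(1)$ loss rather than an $o(|\gamma|_X)$ one; concretely one needs the loxodromic elements $\eta$ realising values near the infimum to carry no Jordan block between their first and second eigenvalues and to satisfy $\tfrac{\ell_1}{\ell_2}(\rho(\eta))\asymp \tfrac{\sigma_1}{\sigma_2}(\rho(\gamma))$ up to a uniform constant when $\eta$ tracks $\gamma$. If $\rho$ is $2$-Anosov this is automatic: $\wedge^{2}\rho$ is projective Anosov, so $\wedge^{2}\rho(\eta)$ is proximal, which forces $\ell_1(\rho(\eta))\ell_2(\rho(\eta))$ to be a simple eigenvalue of $\wedge^{2}\rho(\eta)$ with a spectral gap; hence $\rho(\eta)$ is diagonalisable on its top two eigenvalues, $\ell_1(\rho(\eta))\ell_2(\rho(\eta))\asymp\sigma_1(\rho(\eta))\sigma_2(\rho(\eta))$, and with $\ell_1\asymp\sigma_1$ this gives the control and kills the polynomial factor. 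If $\rho$ is irreducible one argues instead that the Zariski closure of $\rho(\Gamma)$ is reductive — equivalently, uses Burnside density of $\rho(\Gamma)$ in $\mathrm{End}(\mathbb{K}^{d})$ — to pick the closing elements so that the resulting $\rho(\eta)$ are proximal with a comparable eigenvalue gap and no polynomial loss. The main obstacle is this identification of the refined contraction rate with the eigenvalue quantity $\inf_{\eta}\tfrac{\log\frac{\ell_1}{\ell_2}(\rho(\eta))}{|\eta|_{X,\infty}}$, and then isolating non-semisimplicity as the sole remaining obstruction to attainment — which is precisely why the non-semisimple $\mathsf{SL}_{4}(\mathbb{R})$ examples of the abstract can fail to attain.
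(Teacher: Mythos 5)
Your upper-bound argument (testing the H\"older condition along $\gamma^n q\to\gamma^+$ for $\gamma\in\Gamma_\infty$, with $\xi_\rho^1(q)$ chosen off the repelling hyperplane and off the span of the attracting line and the slow generalized eigenspaces, using the spanning hypothesis) is sound and is essentially the paper's Lemma \ref{doublebound} specialized to the sequence $(\gamma^n)_n$. The genuine gap is in the reverse inequality $\alpha_{\xi_\rho^1}\geq \frac{\alpha_\rho}{\log a}$. You correctly identify that the PSW-type estimate only gives $d_{\mathbb{P}}(\xi_\rho^1(x),\xi_\rho^1(y))\lesssim \frac{\sigma_2}{\sigma_1}(\rho(\gamma))$ with $|\gamma|_X\approx(x\cdot y)_{x_0}$, so that everything hinges on comparing the singular value gap $\frac{\sigma_1}{\sigma_2}(\rho(\gamma))$ (or your ``refined contraction rate'') with the eigenvalue quantity $\alpha_\rho$ --- but at exactly this point you assert the conclusion rather than prove it. The statement that ``along each period of a tracking axis the contraction transverse to $\xi_\rho^1(\eta^+)$ is controlled by $\frac{\ell_2}{\ell_1}(\rho(\eta))$'' and that these contributions compose with only $o(|\gamma|_X)$ total loss is precisely the hard part: for non-semisimple $\rho$ the losses at the junctions do \emph{not} multiply away (the paper's own $\mathsf{SL}_4(\mathbb{R})$ examples show a polynomial discrepancy between $\frac{\sigma_2}{\sigma_1}(\rho(\gamma))$ and $e^{-\alpha_\rho|\gamma|_X}$, so the sharp bound is $\frac{\sigma_2}{\sigma_1}(\rho(\gamma))\leq Ce^{-\alpha_\rho|\gamma|_X}|\gamma|_X^{m}$). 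The paper proves this (Theorem \ref{semisimple-bound}) by passing to the semisimplification, invoking Abels--Margulis--Soifer/Benoist and Kassel--Potrie to handle $\rho^{ss}$ and $(\wedge^2\rho)^{ss}$, and then running an induction on the block upper-triangular structure of $\wedge^2\rho$, crucially using the almost-multiplicativity of $\sigma_1$ along geodesic words for $1$-Anosov representations to control the off-diagonal blocks. Nothing in your ``closing-type arguments'' sketch substitutes for this; without it the claimed bound $e^{-(1+o(1))\,t\,A\log a}$ is unsupported.

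A second, related gap is in the attainment part. What is actually needed there is a \emph{uniform, simultaneous} approximation: for every $\gamma$ a tracking element $\gamma f$, $f$ in a fixed finite set, with both $\big||\gamma|_X-|\gamma f|_{X,\infty}\big|$ and $\big|\log\frac{\sigma_1}{\sigma_2}(\rho(\gamma))-\log\frac{\ell_1}{\ell_2}(\rho(\gamma f))\big|$ bounded by a constant. This is the content of Proposition \ref{12finitesubset}, resting on the Kassel--Potrie strengthening of Abels--Margulis--Soifer (and, in the $\{1,2\}$-Anosov non-semisimple case, on the comparison of $\sigma_1,\sigma_2$ of $\rho$ with those of $\rho^{ss}$). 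Your argument instead asserts $\ell_1\ell_2(\rho(\eta))\asymp\sigma_1\sigma_2(\rho(\gamma))$ and $\ell_1\asymp\sigma_1$ for tracking elements; pointwise over $\Gamma$ such comparisons are false (take $\gamma$ with $|\gamma|_{X,\infty}$ small and $|\gamma|_X$ large), and in the regime where they do hold they are exactly the AMS/Kassel--Potrie statement, which you would need to invoke or reprove --- diagonalisability of $\rho(\eta)$ on its top two eigenvalues (which your $\wedge^2$ proximality remark does give) is not by itself a quantitative, uniform-in-$\eta$ comparison. So the overall architecture is reasonable, but the two load-bearing estimates (the polynomial-loss control of Theorem \ref{semisimple-bound} and the simultaneous eigenvalue--singular value approximation) are missing.
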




We also establish the following double inequality, which is used to prove Theorem \ref{exponent},  for the decay between the second and first singular value of elements in the image of a 1-Anosov representation.

\begin{theorem}\label{semisimple-bound} Let $\rho:\Gamma \rightarrow \mathsf{GL}_d(\mathbb{K})$ be a $1$-Anosov representation. There exists $C>1$ and $m\in \mathbb{Z}$\footnote{For $d\geq 3$ we may take $m=\ell-1$, where $\ell$ is the number of irreducible components of a semisimpification of the second exterior power $\wedge^2\rho:\Gamma \rightarrow \mathsf{GL}(\wedge^2 \mathbb{K}^d)$. In the case where $\rho$ is irreducible then we can take $m=0$.}, $0\leq m \leq \max\big\{0,\frac{d(d-1)}{2}-2\big\}$, with the property that for every $\gamma \in \Gamma$ we have: $$C^{-1}e^{-\beta_{\rho}|\gamma|_X}\leq \frac{\sigma_2(\rho(\gamma))}{\sigma_1(\rho(\gamma))} \leq Ce^{-\alpha_{\rho} |\gamma|_X} |\gamma|_X^{m}$$ where $\alpha_{\rho}:= \underset{\gamma \in \Gamma_{\infty}}{\inf} \frac{\log \frac{\ell_1}{\ell_2} (\rho(\gamma))}{|\gamma|_{X,\infty}}$ and $\beta_{\rho}:= \underset{\gamma \in \Gamma_{\infty}}{\sup}\frac{\log \frac{\ell_1}{\ell_2} (\rho(\gamma))}{|\gamma|_{X,\infty}}$.\end{theorem}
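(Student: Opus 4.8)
The plan is to control $\sigma_2/\sigma_1$ of $\rho(\gamma)$ through the top singular value gap of the exterior power representation. Observe that for any $w \in \mathsf{GL}_d(\mathbb{K})$ one has $\sigma_1(\wedge^2 w) = \sigma_1(w)\sigma_2(w)$ and, when $d \geq 3$, $\sigma_2(\wedge^2 w) = \sigma_1(w)\sigma_3(w)$, so that $\frac{\sigma_2}{\sigma_1}(w) = \frac{\sigma_1(\wedge^2 w)}{\sigma_1(w)^2}$ while the first gap of $\wedge^2 w$ is $\frac{\sigma_2}{\sigma_1}(\wedge^2 w) = \frac{\sigma_3}{\sigma_2}(w)$. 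Since $\rho$ is $1$-Anosov, the representation $\wedge^2\rho$ is itself $1$-Anosov (its limit map being $\tau \mapsto \xi^1_\rho(\tau) \wedge \xi^2_\rho(\tau)$, where $\xi^2_\rho$ is the second limit map of $\rho$, which exists because projective Anosov implies Anosov in rank two after a suitable symmetric power — or more directly one uses that $\wedge^2\rho$ is projective Anosov, a standard fact). Thus both $\log\frac{\sigma_1}{\sigma_2}(\rho(\gamma))$ and $\log\frac{\sigma_1}{\sigma_2}(\wedge^2\rho(\gamma))$ grow linearly in $|\gamma|_X$, with multiplicative constants and additive error $O(1)$, by the basic quasi-isometric estimates for Anosov representations (Kapovich--Leeb--Porti, Bochi--Potrie--Sambarino).

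The lower bound $C^{-1}e^{-\beta_\rho|\gamma|_X} \leq \frac{\sigma_2}{\sigma_1}(\rho(\gamma))$ is the easier half: by the characterization of the Anosov property, $\log\frac{\sigma_1}{\sigma_2}(\rho(\gamma))$ is comparable to $|\gamma|_X$ up to additive constants, and one must show the optimal linear rate is exactly $\beta_\rho = \sup_{\gamma\in\Gamma_\infty} \frac{\log(\ell_1/\ell_2)(\rho(\gamma))}{|\gamma|_{X,\infty}}$. For this I would run the standard argument relating singular values along a word to eigenvalues of periodic (conjugacy-class) data: pick $\gamma$ with $\frac{\log(\ell_1/\ell_2)(\rho(\gamma))}{|\gamma|_{X,\infty}}$ close to the supremum, pass to powers $\gamma^n$ where $\log\frac{\sigma_1}{\sigma_2}(\rho(\gamma^n)) = n\log\frac{\ell_1}{\ell_2}(\rho(\gamma)) + O(1)$ and $|\gamma^n|_X = n|\gamma|_{X,\infty} + O(1)$, and observe that an upper estimate $\frac{\sigma_2}{\sigma_1}(\rho(w)) \leq C e^{-c|w|_X}$ valid for all $w$ forces $c \leq \beta_\rho$; conversely the uniform Anosov gap gives the matching lower bound for all $\gamma$ after adjusting $C$. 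Here one also needs the standard fact that $\liminf$ and $\limsup$ over all of $\Gamma_\infty$ of these ratios agree with the corresponding quantities over the whole group, which follows from the fellow-traveling and the density of axes.

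The upper bound is where the polynomial term $|\gamma|_X^m$ enters, and this is the main obstacle. The idea is to apply the lower bound already proved to the representation $\wedge^2\rho$: writing $\sigma := \wedge^2\rho$ restricted to the span $W \subseteq \wedge^2\mathbb{K}^d$ of its limit map, which is $1$-Anosov, we get $\frac{\sigma_2}{\sigma_1}(\sigma(\gamma)) \geq C'^{-1} e^{-\beta_{\sigma}|\gamma|_X}$; combined with $\frac{\sigma_1(\sigma(\gamma))}{\sigma_1(\rho(\gamma))^2} = \frac{\sigma_2}{\sigma_1}(\rho(\gamma))$, this converts an \emph{upper} bound on $\frac{\sigma_2}{\sigma_1}(\rho(\gamma))$ into an \emph{upper} bound on $\sigma_1(\sigma(\gamma))$, i.e. into a \emph{lower} bound on $\frac{\sigma_2}{\sigma_1}(\sigma(\gamma))\cdot(\text{stuff})$ — so the recursion must be organized correctly. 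The clean route is instead to iterate a comparison between $\log\sigma_1$ of an Anosov representation and its top eigenvalue data: for \emph{semisimple} $\rho$ one has $\log\frac{\sigma_1}{\sigma_2}(\rho(\gamma)) \geq \alpha_\rho|\gamma|_X - O(1)$ with no polynomial loss, because the singular value gap can be read off a metric on the symmetric space and the semisimplification does not change eigenvalues; the polynomial correction $|\gamma|_X^m$ measures precisely the failure of semisimplicity of $\wedge^2\rho$, and $m$ is bounded by (number of Jordan-type blocks) $-1$, i.e. $\ell-1$ where $\ell$ counts the irreducible factors of a semisimplification of $\wedge^2\rho$. Concretely, decomposing $(\wedge^2\mathbb{K}^d, \wedge^2\rho)$ into a block-upper-triangular form with $\ell$ irreducible diagonal blocks, the top singular value of $\wedge^2\rho(\gamma)$ is governed by a product of at most $\ell$ unipotent-type transition matrices each contributing a factor growing at most linearly in $|\gamma|_X$ (Jordan block estimates / the standard bound $\|N^k\| \lesssim k^{b}$), while the diagonal blocks contribute the sharp exponential rate $\alpha_\rho$; multiplying $\ell$ factors each $O(|\gamma|_X)$ against $O(1)$-many of them actually appearing yields the exponent $m \leq \ell - 1$, and the crude universal bound $\ell \leq \binom{d}{2}-1$ (for $d\geq 3$; $m=0$ for $d\leq 2$) gives $m \leq \frac{d(d-1)}{2}-2$. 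The technical heart is thus the estimate, for a block-triangular tuple with prescribed exponential diagonal growth, of $\log\sigma_1$ in terms of the diagonal rates plus a logarithmic-in-$|\gamma|_X$ (hence polynomial after exponentiating) defect, which is a linear-algebra lemma one proves by induction on $\ell$ using submultiplicativity of operator norm and the Anosov gap on each diagonal block to absorb the off-diagonal contributions.
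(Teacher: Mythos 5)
Your high-level architecture for the upper bound is the same as the paper's (pass to a semisimplification, triangularize $\wedge^2\rho$ with $\ell$ irreducible diagonal blocks, induct over the blocks to get $m=\ell-1\le \frac{d(d-1)}{2}-2$), but several of the steps you lean on would fail as stated. First, $\wedge^2\rho$ is \emph{not} $1$-Anosov for a general $1$-Anosov $\rho$: that requires a $\sigma_2/\sigma_3$ gap, i.e.\ that $\rho$ be $\{1,2\}$-Anosov, and the representations $\rho_{s,t}$ of Theorem \ref{1-Holder} in this very paper have $\ell_2=\ell_3$, so your opening claim (and the attempted route through ``$\wedge^2\rho$ restricted to the span of its limit map'') is unfounded. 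Second, in your ``clean route'' the substitutes you propose for the technical heart do not apply: the diagonal blocks $\rho_i$ of the triangularization of $\wedge^2\rho$ need not be Anosov, so there is no ``Anosov gap on each diagonal block'' to absorb off-diagonal terms, and Jordan-block estimates $\|N^k\|\lesssim k^b$ concern powers of a fixed matrix, not the value of the cocycle at an arbitrary $\gamma$. What is actually needed (and what the paper does) is: (a) the sharp exponential bound $\frac{\sigma_2}{\sigma_1}(\rho^{ss}(\gamma))\le C e^{-\alpha_\rho|\gamma|_X}$ for the semisimplification, which is not ``read off the symmetric space'' but comes from the Abels--Margulis--Soifer/Benoist approximation of singular values by eigenvalues in the strengthened Kassel--Potrie form (Theorem \ref{AMS-KP}), since $\alpha_\rho$ is defined via eigenvalues over stable translation length; (b) quasi-multiplicativity of $\sigma_1(\rho^{ss})$ along geodesic words (Lemma \ref{gen-set-1}, a consequence of the $1$-Anosov hypothesis) together with $\sigma_1(\rho_i(\gamma))\le\sigma_1((\wedge^2\rho)^{ss}(\gamma))\asymp\sigma_1(\rho^{ss}(\gamma))\sigma_2(\rho^{ss}(\gamma))$; and (c) the expansion of the off-diagonal block of $\wedge^2\rho(\gamma)$, for $\gamma=h_1\cdots h_p$ written along a geodesic as in Observation \ref{gen-set-0}, as a sum of $p\approx|\gamma|_X$ products $\psi_q(h_1\cdots h_j)\,u_q(h_{j+1})\,\rho_{q+1}(h_{j+2}\cdots h_p)$. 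The factor $|\gamma|_X$ gained at each of the $\ell-1$ induction steps comes from summing these $p$ terms, not from unipotent norm growth; without (a)--(c) your linear-algebra lemma has no proof.

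The lower bound is also not established by your argument. Choosing $\gamma$ near the supremum and passing to powers only shows that any uniform decay rate $c$ with $\frac{\sigma_2}{\sigma_1}(\rho(w))\le Ce^{-c|w|_X}$ for all $w$ satisfies $c\le\beta_\rho$; it does not give the pointwise inequality $\frac{\sigma_2}{\sigma_1}(\rho(\gamma))\ge C^{-1}e^{-\beta_\rho|\gamma|_X}$ for \emph{every} $\gamma$, and ``the uniform Anosov gap gives the matching lower bound after adjusting $C$'' is backwards, since the Anosov gap is an upper bound on $\sigma_2/\sigma_1$. Moreover, the ``standard fact'' that the eigenvalue rates over $\Gamma_\infty$ coincide with the singular-value rates over all of $\Gamma$ is precisely Corollary \ref{equality-exp}, which the paper \emph{deduces} from this theorem; invoking it here is circular, and for semisimple data it is exactly the content of Theorem \ref{AMS-KP}, which you never use. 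The paper's route is: apply Theorem \ref{AMS-KP} to $\rho^{ss}$ to get $\frac{\sigma_2}{\sigma_1}(\rho^{ss}(\gamma))\ge C^{-1}e^{-\beta_\rho|\gamma|_X}$, then transfer back to $\rho$ via $\sigma_1(\rho(\gamma))\sigma_2(\rho(\gamma))=\sigma_1(\wedge^2\rho(\gamma))\ge\sigma_1((\wedge^2\rho)^{ss}(\gamma))\gtrsim\sigma_1(\rho^{ss}(\gamma))\sigma_2(\rho^{ss}(\gamma))$ together with $\sigma_1(\rho(\gamma))\asymp\sigma_1(\rho^{ss}(\gamma))$; this last comparison is itself a nontrivial consequence of the $1$-Anosov hypothesis (\cite{Ts20}, Lemma 2.10), not a constant adjustment, and it is also why the lower bound carries no polynomial loss --- a point your sketch does not address.
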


For an irreducible $1$-Anosov representation $\rho$ the expression of the H\"older exponent of $\xi_{\rho}^1$ in Theorem \ref{exponent} is the optimal exponential decay constant of the ratio between the second and first singular value of elements in $\rho(\Gamma)$. Note that in Theorem \ref{semisimple-bound}, where $\rho$ is not necessarily assumed to be semisimple (i.e with reductive Zariski closure), we have a polynomial term of the length function in the upper bound.

As a corollary of Theorem \ref{semisimple-bound}, we obtain the following upper bound for the restriction of the metric $d_{\mathbb{P}}$ on the projective limit set of a $1$-Anosov representation \hbox{in terms of the visual metric $d_a$.}

\begin{corollary}\label{log-bound} Suppose that $\rho:\Gamma \rightarrow \mathsf{GL}_d(\mathbb{K})$ is a $1$-Anosov representation with Anosov limit map \hbox{$\xi_{\rho}^{1}:(\partial_{\infty}X, d_{a}) \rightarrow (\mathbb{P}(\mathbb{K}^d),d_{\mathbb{P}})$}. There exists $C>1$ and $m\in \mathbb{Z}$, $0\leq m\leq \max\big\{0, \frac{d(d-1)}{2}-2\big\}$, with the property that for every pair of distinct points $x,y\in \partial_{\infty}X$ \begin{equation*} d_{\mathbb{P}}\big(\xi_{\rho}^1(x),\xi_{\rho}^1(y)\big)\leq C d_a(x,y)^{\frac{\alpha_{\rho}}{\log a}}\big|\log d_a(x,y)\big|^{m}\end{equation*} where $\alpha_{\rho}>0$ is defined as in Theorem \ref{semisimple-bound}.\end{corollary}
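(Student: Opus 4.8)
The plan is to deduce Corollary~\ref{log-bound} directly from Theorem~\ref{semisimple-bound} by combining the singular value estimate for $\rho(\gamma)$ with two standard comparison principles for $1$-Anosov representations: first, the comparison between the Fubini--Study distance of the images $\xi_\rho^1(x),\xi_\rho^1(y)$ and the singular value ratio $\frac{\sigma_2}{\sigma_1}$ of a group element $\gamma$ whose axis (or quasi-geodesic) ``separates'' $x$ from $y$; and second, the comparison between the stable translation length $|\gamma|_{X,\infty}$ (equivalently $|\gamma|_X$ up to bounded error) and the Gromov product $(x\cdot y)_{x_0}$, hence the visual distance $d_a(x,y)$.

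First I would fix distinct $x,y\in\partial_\infty X$ and choose a point $p$ on a geodesic of $X$ (or a geodesic in $\Gamma$) that is ``between'' $x$ and $y$; more precisely, using cocompactness pick $\gamma\in\Gamma$ with $\gamma x_0$ within bounded distance of such a point, so that $|\gamma|_X$ is comparable to $(x\cdot y)_{x_0}$ up to an additive constant, which by the visual metric inequality $r^{-1}a^{-(x\cdot y)_{x_0}}\leq d_a(x,y)\leq r\, a^{-(x\cdot y)_{x_0}}$ translates into $e^{-|\gamma|_X\log a}\asymp d_a(x,y)$, i.e. $|\gamma|_X \asymp \frac{1}{\log a}\big|\log d_a(x,y)\big|$ up to multiplicative and additive constants. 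Next I would invoke the standard contraction estimate for projective Anosov representations: since $\rho(\gamma)$ moves a neighborhood of $\xi_\rho^1(\eta^-)$ (the repelling point) into a small ball around $\xi_\rho^1(\eta^+)$ of radius $\asymp\frac{\sigma_2}{\sigma_1}(\rho(\gamma))$, and $x,y$ both lie in the appropriate contracted region, one gets $d_{\mathbb{P}}\big(\xi_\rho^1(x),\xi_\rho^1(y)\big)\lesssim \frac{\sigma_2}{\sigma_1}(\rho(\gamma))$. Feeding in Theorem~\ref{semisimple-bound}, namely $\frac{\sigma_2}{\sigma_1}(\rho(\gamma))\leq C e^{-\alpha_\rho|\gamma|_X}|\gamma|_X^m$, and substituting the comparison $|\gamma|_X\asymp\frac{1}{\log a}|\log d_a(x,y)|$ yields $d_{\mathbb{P}}\big(\xi_\rho^1(x),\xi_\rho^1(y)\big)\leq C' d_a(x,y)^{\alpha_\rho/\log a}\big|\log d_a(x,y)\big|^m$, which is exactly the claimed bound (after absorbing constants and handling the range $d_a(x,y)$ near its diameter by compactness, where the logarithm is bounded).

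The main obstacle, and the step requiring the most care, is making the choice of the separating element $\gamma$ precise and uniform: one must guarantee that $x$ and $y$ lie in the basin on which $\rho(\gamma)$ (or $\rho(\gamma^{-1})$) contracts at the advertised rate, with the implied constants independent of the pair $(x,y)$. This is where one genuinely uses that $\xi_\rho^1$ is spanning only implicitly through Theorem~\ref{semisimple-bound} (whose hypotheses are already in force); the geometric input is the quasi-geodesic automaticity of Gromov hyperbolic groups together with the uniform transversality of $\xi_\rho^1$ and its dual map $\xi_\rho^{d-1}$, which bounds the relevant ``angle'' between $\xi_\rho^1(x)$, $\xi_\rho^1(y)$ and the hyperplane $\xi_\rho^{d-1}$ at the repelling fixed point from below. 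Once this uniformity is secured, the rest is bookkeeping with the visual metric estimates and the elementary inequality $\big|\log(\lambda t)\big|^m\leq C_m\big(|\log t|^m+1\big)$ to handle the additive constants appearing in $|\gamma|_X\asymp\frac{1}{\log a}|\log d_a(x,y)|$.
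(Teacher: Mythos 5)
Your overall strategy is the same as the paper's: dominate $d_{\mathbb{P}}\big(\xi_{\rho}^1(x),\xi_{\rho}^1(y)\big)$ by $\frac{\sigma_2}{\sigma_1}(\rho(\gamma))$ for a group element $\gamma$ whose orbit point tracks a geodesic $(x,y)$, compare $|\gamma|_X$ with $(x\cdot y)_{x_0}$ and hence with $\frac{1}{\log a}|\log d_a(x,y)|$, and then feed in the upper bound of Theorem \ref{semisimple-bound}. The implementation differs only in packaging: the paper works locally on a ball $B_{\varepsilon}(w_0)$, chooses $\gamma$ through the cocompact action on distinct triples (so that $\gamma^{-1}(x,y,z)\in\mathcal{F}$ for a fixed auxiliary point $z$), gets the length comparison from Lemma \ref{triple} and the contraction bound from Lemma \ref{mainlemma} (i), and finally globalizes with three elements $h_1,h_2,h_3$ and bi-Lipschitz estimates; you choose $\gamma$ by nearest-point projection of $x_0$ to a geodesic with endpoints $x,y$, which makes the estimate global at once and removes the auxiliary point and the patching step. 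That choice is legitimate: it gives $\big||\gamma|_X-(x\cdot y)_{x_0}\big|$ bounded, and moreover $(\gamma^{-1}x\cdot\gamma^{-1}y)_{x_0}$, $(\gamma^{-1}x\cdot\gamma^{-1}x_0)_{x_0}$ and $(\gamma^{-1}y\cdot\gamma^{-1}x_0)_{x_0}$ are all uniformly bounded, which is exactly the configuration needed for the contraction estimate.

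One correction at the step you yourself flag as delicate: the hyperplane governing the contraction is not ``$\xi_{\rho}^{d-1}$ at the repelling fixed point'' of $\gamma$ --- your $\gamma$ need not be loxodromic, and even when it is, its fixed points bear no uniform relation to $x,y$. The relevant object is the hyperplane determined by the Cartan decomposition of $\rho(\gamma)$ (which, up to an error controlled by the Anosov property, is $\xi_{\rho}^{d-1}$ evaluated at the boundary direction tracked by $\gamma^{-1}x_0$). Uniform transversality of $\xi_{\rho}^1(\gamma^{-1}x)$ and $\xi_{\rho}^1(\gamma^{-1}y)$ to that hyperplane is what yields $d_{\mathbb{P}}\big(\xi_{\rho}^1(x),\xi_{\rho}^1(y)\big)\lesssim\frac{\sigma_2}{\sigma_1}(\rho(\gamma))$, and it does hold for your choice of $\gamma$ by the Gromov-product bounds above; this is precisely what Lemma \ref{mainlemma} (i) packages (via \cite[Lem. 5.3]{PSW}), so you should invoke it in that form rather than fixed-point dynamics. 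With that substitution the argument is complete; note also that no spanning hypothesis is needed (neither Theorem \ref{semisimple-bound} nor the corollary assumes it), and the remaining bookkeeping with the visual metric, the additive constants, and the regime where $d_a(x,y)$ is bounded below is as you describe and as in the paper.
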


Andr\'es Sambarino in \cite{Sambarino} used the H\"older exponent of an irreducible $1$-Anosov representation \hbox{$\rho:\Gamma \rightarrow \mathsf{PGL}_d(\mathbb{R})$}, where $\Gamma$ is a torsion free hyperbolic group acting convex cocompactly on a complete $\mathsf{CAT}(-1)$ space $(\mathcal{M},d_\mathcal{M})$, in order to establish upper bounds for the Hilbert and spectral entropy of $\rho$. In this setting, in \cite{Sambarino} the following upper bound for the H\"older exponent of $\xi_{\rho}^1$ was proved: $$\alpha_{\xi_{\rho}^{1}}(d_{a},d_{\mathbb{P}}) \leq \frac{1}{\log a}\inf_{\gamma \in \Gamma_{\infty}}\frac{\log \frac{\ell_1}{\ell_2}(\rho(\gamma))}{|\gamma|_{\mathcal{M},\infty}}.$$ Theorem \ref{exponent} shows that the previous upper estimate is in fact an equality.

\subsection{H\"older exponent of $\theta$-Anosov limit maps}\label{generalcase} Let $G$ be a non-compact real algebraic reductive Lie group and $K$ a maximal compact subgroup of $G$. Let also $\mathfrak{a}$ be a Cartan subspace of the Lie algebra $\textup{Lie}(G)$ and $\overline{\mathfrak{a}}^{+}$ a dominant Weyl chamber of $\mathfrak{a}$ so that there is a Cartan decomposition $G=K\exp(\overline{\mathfrak{a}}^{+})K$. Denote by $\mu:G\rightarrow \overline{\mathfrak{a}}^{+}$ the associated Cartan projection. The {\em Lyapunov projection} $\lambda:G\rightarrow \overline{\mathfrak{a}}^{+}$ is defined as follows for $ g\in G$, $$\lambda(g)=\lim_{n\rightarrow \infty}\frac{1}{n}\mu(g^n).$$ 

Fix $\Delta \subset \mathfrak{a}^{\ast}$ a set of simple restricted positive roots and $\theta \subset \Delta$ a subset defining a pair of opposite parabolic subgroups $(P_{\theta}^{+}, P_{\theta}^{-})$ of $G$, see \cite[\S2]{GGKW} for the more background. We fix an irreducible $\theta$-proximal representation $\tau_{\theta}:G\rightarrow \mathsf{GL}_d(\mathbb{K})$ inducing the {\em Pl\"ucker embeddings} $\iota_{\theta}^{+}:G/P_{\theta}^{+} \xhookrightarrow{} \mathbb{P}(\mathbb{R}^d)$ and $\iota_{\theta}^{-}:G/P_{\theta}^{-} \xhookrightarrow{} \mathsf{Gr}_{d-1}(\mathbb{R}^d)$. The homogeneous space $G/P_{\theta}^{\pm}$ is equipped with the metric $d_{\theta^\pm}$, where $d_{\theta^{\pm}}(x,y)=d_{\mathbb{P}}(\iota_{\theta}^{\pm}(x),\iota_{\theta}^{\pm}(y))$ for $x,y\in G/P_{\theta^{\pm}}$. For more background \hbox{we refer to \cite[\S 3 \& \S 5]{GGKW}.}

A representation $\rho:\Gamma \rightarrow G$ is $\theta$-Anosov if and only if $\tau_{\theta}\circ \rho:\Gamma \rightarrow \mathsf{GL}_d(\mathbb{R})$ is $1$-Anosov. In this case, the limit maps of $\tau_{\theta}\circ \rho$ are obtained by precomposing the limit maps of $\rho$ with the Pl\"ucker embedding $\iota_{\theta}^{\pm}$. As a consequence of Theorem \ref{exponent}, for a Zariski dense ${\theta}$-Anosov representation into $G$ we deduce the following general formula for the H$\ddot{\textup{o}}$lder exponent of its Anosov limits maps in terms of the Lyapunov projection on $G$. 

\begin{theorem} \label{Exp-general} Let $G$ be a non-compact real algebraic reductive Lie group and $\theta \subset \Delta$ a subset of simple restricted roots of $G$. Suppose that $\rho:\Gamma \rightarrow G$ is a Zariski dense \hbox{${\theta}$-Anosov representation and} $$\xi_{\rho}^{+}\times \xi_{\rho}^{-}:(\partial_{\infty}X, d_{a}) \rightarrow \big(G/P_{\theta}^{+},d_{\theta^{+}}\big)\times \big(G/P_{\theta}^{-},d_{\theta^{-}}\big)$$ are the Anosov limit maps of $\rho$. Then $\xi_{\rho}^{\pm}$ is $\alpha_{\xi_{\rho}^{\pm}}(d_{a},d_{\theta^{\pm}})$-H\"older and $$\alpha_{\xi_{\rho}^{\pm}}(d_{a},d_{\theta^{\pm}})=\frac{1}{\log a}\inf_{\gamma \in \Gamma_{\infty}} \Bigg(\min_{\varphi\in \theta} \frac{ \varphi(\lambda(\rho(\gamma)))}{|\gamma|_{X,\infty}}\Bigg).$$  \end{theorem}

\subsection{Anosov limit maps not attaining their H\"older exponent.} Denote by $\Sigma_g$ the closed topological surface of genus $g\geq 2$. We equip the hyperbolic plane $\mathbb{H}_{\mathbb{R}}^2$ with the standard hyperbolic Riemannian distance and its Gromov boundary with the visual metric $$d_\textup{v}(x,y)=e^{-(x\cdot y)_{x_0}},  \ x,y\in \partial_{\infty}\mathbb{H}_{\mathbb{R}}^2$$ where $x_0\in \mathbb{H}_{\mathbb{R}}^2$ is a fixed basepoint. As a consequence of Theorem \ref{exponent}, for every $1$-Anosov representation \hbox{$\rho:\Gamma \rightarrow \mathsf{GL}_3(\mathbb{K})$} (which is also $2$-Anosov) and any visual metric $d_a$ on $\partial_{\infty}X$, the limit map $\xi_{\rho}^1:(\partial_{\infty}X,d_a)\rightarrow (\mathbb{P}(\mathbb{K}^3),d_{\mathbb{P}})$ attains its H\"older exponent. However, this is not the case for representations which are neither irreducible nor $\{1,2\}$-Anosov in higher dimension. More precisely, we construct examples in dimension 4:
\begin{theorem}\label{1-Holder} Let \hbox{$\rho_1:\pi_1(\Sigma_g) \rightarrow \mathsf{SL}_2(\mathbb{R})$} be a discrete faithful representation realizing $(\mathbb{H}_{\mathbb{R}}^2,d_{\mathbb{H}^2})$ as a model space for $\pi_1(\Sigma_g)$. There is $\varepsilon>0$ and a continuous family of \hbox{$1$-Anosov} representations $\big\{\rho_{s,t}:\pi_1(\Sigma_g) \rightarrow \mathsf{SL}_4(\mathbb{R})\big\}_{(s,t)\in \mathcal{O}}, \mathcal{O}:=(s,t)\in \mathbb{R}\times (-\varepsilon,\varepsilon),$ such that for $s,t\neq 0$:\\
\noindent \textup{(i)} the image of the $(\rho_1,\rho_{s,t})$-equivariant Anosov limit map $\xi_{s,t}^1:(\partial_{\infty}\mathbb{H}_{\mathbb{R}}^2,d_\textup{v})\rightarrow (\mathbb{P}(\mathbb{R}^4),d_{\mathbb{P}})$ of $\rho_{s,t}$ is spanning $\langle \xi_{s,t}^1(\partial_{\infty}\mathbb{H}_{\mathbb{R}}^2)\rangle =\mathbb{R}^4$,

\noindent \textup{(ii)} $\alpha_{\xi_{s,t}^1}(d_\textup{v},d_{\mathbb{P}})=\frac{1}{2}$ and $\xi_{s,t}^1$ is not $\frac{1}{2}$-H\"older.\end{theorem}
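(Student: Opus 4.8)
The plan is to produce $\rho_{s,t}$ as a two‑parameter family of non‑semisimple block upper triangular deformations of the reducible representation $\rho_1\oplus\mathbf 1\oplus\mathbf 1\colon\pi_1(\Sigma_g)\to\mathsf{SL}_4(\mathbb R)$, arranged so that the $\rho_1$–block sits at the \emph{top} of a three–step invariant flag. Writing $\mathbb R^4=\mathbb Re_1\oplus\mathbb Re_2\oplus\mathbb R^2$, I would take
$$\rho_{s,t}(\gamma)=\begin{pmatrix}1 & c_t(\gamma) & a_{s,t}(\gamma)\\ 0 & 1 & b_s(\gamma)\\ 0 & 0 & \rho_1(\gamma)\end{pmatrix},$$
where $c_t\colon\pi_1(\Sigma_g)\to\mathbb R$ is a homomorphism, nonzero when $t\neq0$; $b_s$ is a $1$–cocycle valued in $\mathrm{Hom}(\mathbb R^2,\mathbb R)$ for the twisted action $\ell\mapsto\ell\circ\rho_1(\gamma)$, whose class lies in a space isomorphic to $H^1(\pi_1(\Sigma_g),\rho_1)\neq0$ and is nonzero when $s\neq0$; and $a_{s,t}$ is the cochain valued in $\mathrm{Hom}(\mathbb R^2,\mathbb R)$ forced (up to the relevant coboundary) by the multiplicativity requirement $a(\gamma\delta)=a(\delta)+c(\gamma)b(\delta)+a(\gamma)\circ\rho_1(\delta)$, which is solvable since the obstruction lies in $H^2(\pi_1(\Sigma_g),\rho_1)=0$. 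As $\det\rho_{s,t}(\gamma)=\det\rho_1(\gamma)=1$ this is $\mathsf{SL}_4(\mathbb R)$–valued and depends real‑analytically on $(s,t)$.

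Since the diagonal blocks are $1,1,\rho_1(\gamma)$, the eigenvalue moduli of $\rho_{s,t}(\gamma)$ are $\{\ell_1(\rho_1(\gamma)),\ell_1(\rho_1(\gamma))^{-1},1,1\}$, so $\tfrac{\ell_1}{\ell_2}(\rho_{s,t}(\gamma))=\ell_1(\rho_1(\gamma))=e^{|\gamma|_{\mathbb H^2,\infty}/2}$ for every $\gamma\in\Gamma_\infty$. At $(0,0)$ one gets $\rho_1\oplus\mathbf 1\oplus\mathbf 1$, which is $1$–Anosov because $\log\sigma_1(\rho_1(\gamma))\asymp|\gamma|_{\mathbb H^2}$ dominates the bounded contribution of the trivial summands; hence $\rho_{s,t}$ is $1$–Anosov for $(s,t)$ near $0$ by openness of the Anosov condition, and for all $s\in\mathbb R$ with $|t|<\varepsilon$ after conjugating by $\mathrm{diag}(\lambda,\lambda,I_2)$, which rescales $(a_{s,t},b_s)$ while fixing $c_t$ and so shows that all representations with a fixed $t$ and $s\neq0$ are conjugate. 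The limit maps are equivariant for the boundary action of $\rho_1$, identifying $(\mathbb H^2_\mathbb R,d_{\mathbb H^2})$ as a model space. Granting (i), Theorem \ref{exponent} applies and gives $\alpha_{\xi_{s,t}^1}(d_{\textup v},d_{\mathbb P})=\inf_\gamma\tfrac{|\gamma|_{\mathbb H^2,\infty}/2}{|\gamma|_{\mathbb H^2,\infty}}=\tfrac12$; note $\rho_{s,t}$ is neither irreducible nor $2$–Anosov (the gap $\sigma_2/\sigma_3$ grows only polynomially in the word length because of the unipotent block), so the ``moreover'' clause of Theorem \ref{exponent} does not apply.

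For (i) I would analyze invariant subspaces. The only candidates for a proper invariant subspace are the line $\mathbb Re_1$, the plane $V_2=\mathbb Re_1\oplus\mathbb Re_2$, and an invariant hyperplane $H$. The limit curve avoids $\mathbb P(\mathbb Re_1)$ and $\mathbb P(V_2)$ since the attracting eigenline of $\rho_{s,t}(\gamma)$ projects onto the attracting line of $\rho_1(\gamma)$ and hence has nonzero $\mathbb R^2$–component. To exclude an invariant hyperplane $H$: $H\cap V_2$ is invariant of dimension $\geq 1$; it cannot equal $V_2$ (else $H/V_2\subset\mathbb R^2$ would be a line invariant under $\rho_1$), so it is a line, and when $c_t\not\equiv0$ the only invariant line of $V_2$ is $\mathbb Re_1$; thus $\mathbb Re_1\subset H$ and $H/\mathbb Re_1$ is a $2$–dimensional invariant subspace of the quotient $\bar\rho=\left(\begin{smallmatrix}1 & b_s\\ 0 & \rho_1\end{smallmatrix}\right)$, which does not exist because $[b_s]\neq0$ makes this extension non‑split and $\rho_1$ is irreducible. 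Hence $\xi_{s,t}^1$ is spanning whenever $s,t\neq0$ (and in particular $\rho_{s,t}$ has no invariant complement, so it is genuinely non‑semisimple).

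The crux is (ii). Here the three–step shape is essential: a two–step deformation $\left(\begin{smallmatrix}I_2 & \ast\\ 0 & \rho_1\end{smallmatrix}\right)$ would already fail, having $\sigma_2/\sigma_1(\rho(\gamma))\asymp e^{-|\gamma|_{\mathbb H^2}/2}$ with no polynomial correction. Fix a primitive hyperbolic $h$ with $c_t(h)\neq0$ and with the $f_-$–component of $b_s(h)$ nonzero, where $f_+,f_-\in\mathbb R^2$ are the eigenvectors of $\rho_1(h)$ for the eigenvalues $\mu^{\pm1}$, $\mu=\ell_1(\rho_1(h))>1$, and with $x_0$ on the axis of $h$. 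Writing out $\rho_{s,t}(h^n)$ one sees $c_t(h^n)=n\,c_t(h)$, the entry $b_s(h^n)$ has size $\asymp\mu^n$ along $f_+$ and $O(1)$ along $f_-$, and the $f_-$–component of $a_{s,t}(h^n)$ satisfies a linear recursion whose cross term $c_t(h^{n-1})\,b_s(h)$ forces it to grow like $n$. Consequently $\sigma_1(\rho_{s,t}(h^n))\asymp\mu^n$ (top singular direction $\approx f_+$), while the restriction of $\rho_{s,t}(h^n)$ to the orthogonal complement of $f_+$ has operator norm $\asymp n$ (both $e_2$ and $f_-$ are sent to vectors of size $\asymp n$ along $e_1$), so $\sigma_2(\rho_{s,t}(h^n))\asymp n$ and
$$\frac{\sigma_2}{\sigma_1}\bigl(\rho_{s,t}(h^n)\bigr)\asymp n\,\mu^{-n}\asymp |h^n|_{\mathbb H^2}\;e^{-|h^n|_{\mathbb H^2}/2}.$$
Taking $x_n,y_n\in\partial_{\infty}\mathbb H^2_\mathbb R$ to be the endpoints of the geodesic through $\rho_1(h^n)x_0$ meeting the axis of $h$ orthogonally there, one has $(x_n\cdot y_n)_{x_0}=|h^n|_{\mathbb H^2}$, so $d_{\textup v}(x_n,y_n)\asymp e^{-|h^n|_{\mathbb H^2}}$, while the lower bound for the modulus of continuity of a spanning Anosov limit map — the same estimate that produces the lower bound in the proof of Theorem \ref{exponent} — gives $d_{\mathbb P}(\xi_{s,t}^1(x_n),\xi_{s,t}^1(y_n))\gtrsim\tfrac{\sigma_2}{\sigma_1}(\rho_{s,t}(h^n))$. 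Combining, $d_{\mathbb P}(\xi_{s,t}^1(x_n),\xi_{s,t}^1(y_n))/d_{\textup v}(x_n,y_n)^{1/2}\gtrsim|h^n|_{\mathbb H^2}\to\infty$, so $\xi_{s,t}^1$ is not $\tfrac12$–Hölder. I expect the main obstacle to be exactly this transfer step: in the non‑semisimple setting one must use the spanning hypothesis to establish the two–sided comparison $d_{\mathbb P}(\xi^1(x),\xi^1(y))\asymp\sigma_2/\sigma_1(\rho(\gamma_{x,y}))$ — with $\gamma_{x,y}$ a group element whose geodesic segment fellow‑travels $(x,y)$ at length $\approx(x\cdot y)_{x_0}$ — and to verify that $h^n$ is legitimately of this form so that the polynomial factor is not absorbed; a secondary difficulty is that the three simultaneous requirements ($1$–Anosov, spanning, Hölder exponent exactly $\tfrac12$) rigidly force the three–step flag shape above.
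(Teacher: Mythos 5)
Your construction is essentially the paper's, in a more cohomological packaging: the paper also takes a reducible, non-semisimple deformation preserving the flag $\mathbb{R}e_1\subset\mathbb{R}e_1\oplus\mathbb{R}e_2\subset\mathbb{R}^4$, with a unipotent action on the $2$-plane (the homomorphism $a_1\mapsto J_t$ playing the role of your $c_t$), $\rho_1$ on the quotient, and an explicit coupling cocycle (the matrices $\mathcal{X}$, $\mathcal{X}_{s,t}$ chosen by hand to satisfy the surface relation); Anosovness and $\alpha_{\xi^1_{s,t}}=\tfrac12$ come from the eigenvalue list $\{\ell_1(\rho_1(\gamma)),1,1,\ell_1(\rho_1(\gamma))^{-1}\}$ and Theorem \ref{exponent}, and the failure of $\tfrac12$-H\"older from linear growth of $\sigma_2$ along powers of one element plus the estimate of Lemma \ref{doublebound}. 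Your use of $[b_s]\neq 0$ in $H^1$, $H^2=0$ to solve for $a_{s,t}$, and the classification of invariant subspaces to get spanning is a clean alternative to the paper's eigenvector computation for $\rho_{s,t}(b_1)$ and $\rho_{s,t}(a_1b_1)$ (which is what forces $|t|<\varepsilon$ there; your argument gives spanning for all $t\neq 0$).

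Two steps of your write-up need repair, both fixable. First, ``both $e_2$ and $f_-$ are sent to vectors of size $\asymp n$ along $e_1$'' does not bound $\sigma_2$ from below: two orthonormal vectors can have large nearly parallel images while $\sigma_2$ stays bounded. The correct route is $\sigma_1\sigma_2(\rho_{s,t}(h^n))=\big\|\wedge^2\rho_{s,t}(h^n)\big\|\geq \big\|\rho_{s,t}(h^n)e_2\wedge\rho_{s,t}(h^n)f_+\big\|\geq |c_t(h^n)|\,\mu^n\asymp n\mu^n$ together with $\sigma_1(\rho_{s,t}(h^n))\lesssim\mu^n$ (comparison with the semisimplification), which only needs $c_t(h)\neq0$; alternatively do what the paper does and take $h=a_1$, on which the coupling cocycle vanishes, so $\rho_{s,t}(a_1^n)$ is block diagonal and $\sigma_2(\rho_{s,t}(a_1^n))=\|J_{tn}\|\geq|t|n$ is immediate. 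Second, the lower bound $d_{\mathbb{P}}(\xi^1_{s,t}(x_n),\xi^1_{s,t}(y_n))\gtrsim\frac{\sigma_2}{\sigma_1}(\rho_{s,t}(h^n))$ cannot be asserted for your prescribed pair (endpoints of the geodesic through $h^nx_0$ orthogonal to the axis): the ``two-sided comparison $d_{\mathbb{P}}\asymp\sigma_2/\sigma_1(\rho(\gamma_{x,y}))$ for all fellow-travelling pairs'' you invoke is exactly the hyperconvexity estimate of Lemma \ref{mainlemma}\,(ii) and is not available for the non-hyperconvex $\rho_{s,t}$ (indeed, if it held for all pairs the polynomial term would contradict nothing, but it is simply unproved here). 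What is available, and what the paper uses, is the argument of Lemma \ref{doublebound}: for the fixed sequence $(h^n)_n$ and using the spanning property, one finds \emph{some} pair $x,y\neq h^{-}$ (chosen so that the relevant second coordinate of the limit map in the Cartan frames of $\rho_{s,t}(h^n)$ separates them) with $d_{\mathbb{P}}(\xi^1_{s,t}(h^nx),\xi^1_{s,t}(h^ny))\geq\epsilon\,\frac{\sigma_2}{\sigma_1}(\rho_{s,t}(h^n))$ for all $n$, while Lemma \ref{Gromovproduct1} gives $d_\textup{v}(h^nx,h^ny)\lesssim e^{-|h^n|_{\mathbb{H}_{\mathbb{R}}^2}}$. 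With these two substitutions your proposal coincides with the paper's proof of Theorem \ref{1-Holder}.
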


In the previous statement $d_{\mathbb{H}^2}$ denotes the standard Riemannian distance on the real hyperbolic plane $\mathbb{H}_{\mathbb{R}}^2$. For $s,t\neq 0$, the representation $\rho_{s,t}$ in Theorem \ref{1-Holder} is reducible and preserves the $2$-dimensional subspace $W:= \mathbb{R}^2\times \{0\}$, on which the restriction $\rho|_W$ has unipotent image in $\mathsf{GL}(V)$. More precisely, there exists $C_{s,t}>1$ such that for every $\gamma\in \pi_1(\Sigma_g)$, \hbox{$\ell_1(\rho_{s,t}(\gamma))=\ell_1(\rho_1(\gamma))$ and} $$\sigma_2(\rho_{s,t}(\gamma))\leq C_{s,t}\log \sigma_1(\rho_1(\gamma)).$$ 

\subsection{$(1,1,2)$-hyperconvex representations.} The inverse of the limit map of an Anosov represention $\rho:\Gamma \rightarrow \mathsf{GL}_d(\mathbb{K})$ is also H\"older continuous. In a more general setting, we establish that if a representation of $\rho:\Gamma \rightarrow \mathsf{GL}_d(\mathbb{K})$ admits an equivariant continuous injective map $\xi_{\rho}:\partial_{\infty}X \rightarrow \mathbb{P}(\mathbb{K}^d)$, then its inverse is H\"older continuous. 

\begin{theorem} \label{exponent-inverse2} Let $\rho:\Gamma \rightarrow \mathsf{GL}_d(\mathbb{K})$ be a representation admitting a continuous, $\rho$-equivariant, injective map $\xi_{\rho}:(\partial_{\infty}X,d_a) \rightarrow (\mathbb{P}(\mathbb{K}^d),d_{\mathbb{P}})$. The inverse of $\xi_{\rho}$ is H\"older continuous and there exist $c,\beta>0$ such that \begin{equation*} d_{\mathbb{P}}\big(\xi_{\rho}(x),\xi_{\rho}(y)\big)\geq c d_{a}(x,y)^{\beta} \ \ \forall \ x,y\in \partial_{\infty}X.\end{equation*}\end{theorem}

Let us note that a representation $\rho$ admitting an injective limit map $\xi_{\rho}$ might be irreducible but fail to be Anosov with respect to any pair of parabolic subgroups, e.g. see \cite[Ex. 10.1]{Ts20}.

A general formula, similar to the one in Theorem \ref{exponent}, is not known for the inverse of the Anosov limit map of $1$-Anosov representations. However, it is possible to obtain an explicit formula, in terms of the eigenalue of elements, for the particular class of $(1,1,2)$-hyperconvex representations. Pozzetti--Sambarino--Wienhard in \cite{PSW} introduced the notion of $(p,q,r)$-hyperconvex Anosov representation, enjoying common transversality properties with Hitchin representations. We provide here the definition of $(1,1,2)$-hyperconvex representations.

\begin{definition} \textup{(}\textup{Pozzetti--Sambarino--Wienhard} \cite{PSW}\textup{)} Let $\rho:\Gamma \rightarrow \mathsf{GL}_{d}(\mathbb{K})$ be a $\{1,2\}$-Anosov representation and $\xi_{\rho}^{i}:\partial_{\infty}X \rightarrow \mathsf{Gr}_{i}(\mathbb{K}^d)$, $i\in \{1,2,d-2,d-1\}$, the Anosov limit maps of $\rho$. The representation $\rho$ is called $(1,1,2)$-hyperconvex if for every triple $x_1,x_2,x_3 \in \partial_{\infty}X$ of distinct points we have: $$\mathbb{K}^d=\xi_{\rho}^{1}(x_1) \oplus \xi_{\rho}^{1}(x_2)\oplus \xi_{\rho}^{d-2}(x_3).$$\end{definition}

The set of $(1,1,2)$-hyperconvex representations of $\Gamma$ into $\mathsf{GL}_d(\mathbb{K})$ forms an open subset of the space of representations $\textup{Hom}(\Gamma,\mathsf{GL}_d(\mathbb{K}))$ (see \cite[Prop. 6.2]{PSW}). Examples of $(1,1,2)$-hyperconvex representations include Hitchin representations \cite{Labourie} and their exterior powers \cite[Prop. 9.6]{PSW}.

For a representation $\rho:\Gamma \rightarrow \mathsf{GL}_d(\mathbb{K})$ a $\rho$-equivariant map into $\mathbb{P}(\mathbb{K}^d)$ is called {\em spanning} if its image spans $\mathbb{K}^d$. For a $(1,1,2)$-hyperconvex representation with spanning Anosov limit map $\xi_{\rho}^1$, we calculate the H\"older exponent of the inverse in terms of the ratio between the moduli of the first and second eigenvalue of elements.

\begin{theorem} \label{exponent2} Let $\rho:\Gamma \rightarrow \mathsf{GL}_d(\mathbb{K})$ be a $(1,1,2)$-hyperconvex representation with spanning limit map $\xi_{\rho}^1:(\partial_{\infty}X,d_a)\rightarrow (\mathbb{P}(\mathbb{K}^d),d_{\mathbb{P}})$. The H\"older exponent of the inverse $\eta_{\rho}^1:(\xi_{\rho}^1(\partial_{\infty}X),d_{\mathbb{P}})\rightarrow (\partial_{\infty}X,d_a)$ of $\xi_{\rho}^1$ is attained \hbox{and is equal to} $$(\log a)\inf_{\gamma \in  \Gamma_{\infty}}\frac{|\gamma|_{X,\infty}}{\log \frac{\ell_1}{\ell_2}(\rho(\gamma))}.$$ \end{theorem}

Given a $(1,1,2)$-hyperconvex representation $\rho_1:\Gamma \rightarrow \mathsf{GL}_{m}(\mathbb{K})$ and an irreducible $1$-Anosov representation $\rho_2:\Gamma \rightarrow \mathsf{GL}_{r}(\mathbb{K})$, we also obtain a similar formula for the H\"older exponent of the map conjugating the action of $\rho_1(\Gamma)$ on $\xi_{\rho_1}^{1}(\partial_{\infty}X)$ with the action of $\rho_2(\Gamma)$ on $\xi_{\rho_2}^{1}(\partial_{\infty}X)$

\begin{theorem} \label{conjugation} Let $\rho_1:\Gamma \rightarrow \mathsf{GL}_{m}(\mathbb{K})$ be a $(1,1,2)$-hyperconvex representation and $\rho_2:\Gamma \rightarrow \mathsf{GL}_{r}(\mathbb{K})$ a $1$-Anosov representation. Suppose that the limit maps $\xi_{\rho_1}^{1}:\partial_{\infty}X \rightarrow \mathbb{P}(\mathbb{K}^{m})$ and \hbox{$\xi_{\rho_2}^{1}:\partial_{\infty}X \rightarrow \mathbb{P}(\mathbb{K}^{r})$} of $\rho_1$ and $\rho_2$ respectively are spanning. The map conjugating the action of $\rho_1(\Gamma)$ on $\xi_{\rho_1}^1(\partial_{\infty}X)$ with the action of $\rho_2(\Gamma)$ on $\xi_{\rho_2}^1(\partial_{\infty}X)$,  $$\xi_{\rho_2}^{1}\circ (\xi_{\rho_1}^{1})^{-1}:(\xi_{\rho_1}^{1}(\partial_{\infty}X),d_{\mathbb{P}}) \rightarrow (\xi_{\rho_2}^{1}(\partial_{\infty}X),d_{\mathbb{P}}),$$ is H\"older continuous and its exponent is equal to $$\alpha_{\rho_1,\rho_2}:=\underset{\gamma \in \Gamma_{\infty}}{\inf}\frac{\log \frac{\ell_1}{\ell_2}(\rho_2(\gamma))}{\log \frac{\ell_1}{\ell_2}(\rho_1(\gamma))}.$$ Moreover, if $\rho_2$ is either irreducible or $\{1,2\}$-Anosov, then $\xi_{\rho_2}^{1}\circ (\xi_{\rho_1}^{1})^{-1}$ is $\alpha_{\rho_1,\rho_2}$-H\"older. \end{theorem}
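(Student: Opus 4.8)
\emph{The bound $\alpha_F\le\alpha_{\rho_1,\rho_2}$.} Write $\xi^{i}:=\xi_{\rho_i}^{1}$, $F:=\xi^{2}\circ(\xi^{1})^{-1}$, and $\alpha_F$ for the H\"older exponent of $F$; the plan is to show $\alpha_F=\alpha_{\rho_1,\rho_2}$ and then decide when the supremum is attained. For the upper bound I would test $F$ near the attracting fixed point of an arbitrary $\gamma\in\Gamma_{\infty}$, with fixed points $\gamma^{\pm}\in\partial_{\infty}X$. Since $\rho_i$ is $1$-Anosov, $\rho_i(\gamma)$ is proximal with attracting line $\xi^{i}(\gamma^{+})$, $\rho_i(\gamma)$-invariant complementary hyperplane $\xi_{\rho_i}^{d_i-1}(\gamma^{-})$ (here $d_1=m$, $d_2=r$), and $\frac{\ell_1}{\ell_2}(\rho_i(\gamma))>1$. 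As $\xi^{2}(\partial_{\infty}X)$ spans $\mathbb{K}^{r}$ and $\partial_{\infty}X$ is perfect, there is $y\in\partial_{\infty}X\smallsetminus\{\gamma^{+},\gamma^{-}\}$ with $\xi^{2}(y)$ not contained in $\xi^{2}(\gamma^{+})\oplus W$, where $W$ is the span of the generalized eigenspaces of $\rho_2(\gamma)$ for eigenvalues of modulus $<\ell_2(\rho_2(\gamma))$. Put $x_n:=\gamma^{n}y$; then $x_n\to\gamma^{+}$, $\xi^{i}(x_n)=\rho_i(\gamma)^{n}\xi^{i}(y)$, and decomposing along the Jordan forms of the $\rho_i(\gamma)$ yields, for constants $c,C,k,K>0$ depending on $\gamma$,
\[
c\,n^{-K}\big(\tfrac{\ell_2}{\ell_1}(\rho_2(\gamma))\big)^{n}\le d_{\mathbb{P}}\big(\xi^{2}(x_n),\xi^{2}(\gamma^{+})\big),\qquad d_{\mathbb{P}}\big(\xi^{1}(x_n),\xi^{1}(\gamma^{+})\big)\le C\,n^{k}\big(\tfrac{\ell_2}{\ell_1}(\rho_1(\gamma))\big)^{n}.
\]
Applying the $\alpha$-H\"older inequality for $F$ to the pair $\xi^{1}(x_n),\xi^{1}(\gamma^{+})$ (using $F\xi^{1}(x_n)=\xi^{2}(x_n)$, $F\xi^{1}(\gamma^{+})=\xi^{2}(\gamma^{+})$), then taking logarithms, dividing by $n$ and letting $n\to\infty$, removes the polynomial factors and leaves $\log\frac{\ell_1}{\ell_2}(\rho_2(\gamma))\ge\alpha\log\frac{\ell_1}{\ell_2}(\rho_1(\gamma))$. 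Hence $\alpha\le\frac{\log\frac{\ell_1}{\ell_2}(\rho_2(\gamma))}{\log\frac{\ell_1}{\ell_2}(\rho_1(\gamma))}$ for every $\gamma\in\Gamma_{\infty}$, so $\alpha_F\le\alpha_{\rho_1,\rho_2}$.

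\emph{H\"older continuity, reduced to a comparison of singular gaps.} For the reverse inequality I would run the scheme behind Theorems \ref{exponent} and \ref{exponent2}, but with one contracting element serving both representations. Given distinct $x,y\in\partial_{\infty}X$, choose $\gamma=\gamma_{x,y}\in\Gamma$ with $d_a(x,y)\asymp a^{-|\gamma|_X}$ adapted to $\rho_1$; since $\rho_1$ is $(1,1,2)$-hyperconvex — hence $\{1,2\}$-Anosov, and by Theorem \ref{exponent2}(ii) the inverse of $\xi^1$ attains a clean H\"older exponent — the proof of Theorem \ref{exponent2}(ii) yields $d_{\mathbb{P}}(\xi^{1}(x),\xi^{1}(y))\asymp\frac{\sigma_2}{\sigma_1}(\rho_1(\gamma))$ with uniform constants. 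For the same $\gamma$, the contraction property of the $1$-Anosov representation $\rho_2$ (as in the proof of Theorem \ref{exponent}) gives the matching one-sided bound $d_{\mathbb{P}}(\xi^{2}(x),\xi^{2}(y))\le C\,\frac{\sigma_2}{\sigma_1}(\rho_2(\gamma))$. Using $\frac{\sigma_2}{\sigma_1}(\rho_1(\gamma))\le Ce^{-c|\gamma|_X}$ (domination of $1$-Anosov representations), the $\alpha$-H\"older inequality for $F$ at $(x,y)$ then follows from a comparison of top singular gaps,
\[
\log\tfrac{\sigma_1}{\sigma_2}(\rho_2(g))\ \ge\ \alpha_{\rho_1,\rho_2}\,\log\tfrac{\sigma_1}{\sigma_2}(\rho_1(g))-m\log(2+|g|_X)-O(1)\qquad(g\in\Gamma),
\]
where by Theorem \ref{semisimple-bound} the defect $m$ may be taken to be $0$ when $\rho_2$ is irreducible or $\{1,2\}$-Anosov and in general is at most $\max\{0,\tfrac{r(r-1)}{2}-2\}$: the exponential factor $\big(\tfrac{\sigma_2}{\sigma_1}(\rho_1(\gamma))\big)^{\alpha_{\rho_1,\rho_2}-\alpha}$ absorbs $(2+|\gamma|_X)^{m}$ for every $\alpha<\alpha_{\rho_1,\rho_2}$, and when $m=0$ it can be dropped, which handles $\alpha=\alpha_{\rho_1,\rho_2}$. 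With the previous step this gives $\alpha_F=\alpha_{\rho_1,\rho_2}$, and $\alpha_{\rho_1,\rho_2}$-H\"older continuity of $F$ when $\rho_2$ is irreducible or $\{1,2\}$-Anosov.

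\emph{The comparison of singular gaps, and the main obstacle.} Both $\log\frac{\sigma_1}{\sigma_2}(\rho_i(g))$ are comparable to $|g|_X$ by domination, so the displayed inequality only needs to be checked for $|g|_X$ large. When $g\in\Gamma_{\infty}$ is uniformly quasi-geodesic in $X$ it is essentially immediate: the Cartan and Jordan projections of $\rho_i(g)$ agree up to an additive constant (see \cite{BPS,GGKW}), so it reduces to the defining estimate $\frac{\log\frac{\ell_1}{\ell_2}(\rho_2(g))}{\log\frac{\ell_1}{\ell_2}(\rho_1(g))}\ge\alpha_{\rho_1,\rho_2}$, which is legitimate because $\log\frac{\ell_1}{\ell_2}(\rho_1(g))$ is bounded below by a positive constant (uniform positivity of stable translation lengths in $\Gamma$, together with Theorem \ref{exponent} applied to $\rho_1$). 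The hard part — the only essentially new ingredient beyond Theorems \ref{exponent} and \ref{exponent2} — is passing from uniformly quasi-geodesic $g$ to an arbitrary $g$: a general $g$ may be a \emph{fat} conjugate $g=hwh^{-1}$ of a short element $w$, with $\log\frac{\ell_1}{\ell_2}(\rho_i(g))$ bounded but $\log\frac{\sigma_1}{\sigma_2}(\rho_i(g))$ of order $|g|_X$, and one must control the ratio of the two singular gaps for such $g$ while losing no more than the stated polynomial. This is exactly where Theorem \ref{semisimple-bound} (and the vanishing of its defect when $\rho_2$ is irreducible or $\{1,2\}$-Anosov) intervenes, and where one uses that $\gamma_{x,y}$ can be selected among the uniformly quasi-geodesic contracting elements produced in the proof of Theorem \ref{exponent}, so that in the end only such $g$ need to be handled.

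\emph{Summary of dependencies.} In brief: the upper bound on $\alpha_F$ is a soft computation at fixed points using only that $\rho_1,\rho_2$ are $1$-Anosov with spanning limit sets; the lower bound needs (a) the clean two-sided comparison $d_{\mathbb{P}}(\xi^{1}(x),\xi^{1}(y))\asymp\frac{\sigma_2}{\sigma_1}(\rho_1(\gamma_{x,y}))$ for the $(1,1,2)$-hyperconvex representation $\rho_1$ (from the proof of Theorem \ref{exponent2}(ii)), (b) the one-sided contraction bound for $\rho_2$ (from the proof of Theorem \ref{exponent}), and (c) the joint singular-to-eigenvalue comparison above, whose polynomial defect, governed by Theorem \ref{semisimple-bound}, is precisely what distinguishes the two conclusions of the statement.
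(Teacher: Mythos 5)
Your upper bound $\alpha_F\le\alpha_{\rho_1,\rho_2}$ is fine: testing the H\"older inequality along the orbit $\gamma^n y$ at the attracting fixed point, with $y$ chosen (by the spanning hypothesis) to have a nonzero component in the modulus-$\ell_2$ generalized eigenspace of $\rho_2(\gamma)$, is a correct argument and is a genuinely different route from the paper, which instead runs a Lemma \ref{doublebound}-type computation along an arbitrary sequence and then invokes Corollary \ref{equality-exp-1}. Likewise, your reduction of the lower bound and of the attainment statement to the contraction estimates of Lemma \ref{mainlemma} (two-sided for the hyperconvex $\rho_1$, one-sided for $\rho_2$) together with a comparison of the form $\frac{\sigma_2}{\sigma_1}(\rho_2(g))\le C\big(\frac{\sigma_2}{\sigma_1}(\rho_1(g))\big)^{\alpha_{\rho_1,\rho_2}}|g|_X^{m}$ is exactly the skeleton of the paper's proof.

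The genuine gap is that this comparison inequality is the whole point, and your justification of it does not work. It does not follow from Theorem \ref{semisimple-bound}: combining the upper bound of that theorem for $\rho_2$ with its lower bound for $\rho_1$ only yields the exponent $\alpha_{\rho_2}/\beta_{\rho_1}=\big(\inf_{\gamma}\frac{\log\frac{\ell_1}{\ell_2}(\rho_2(\gamma))}{|\gamma|_{X,\infty}}\big)\big/\big(\sup_{\gamma}\frac{\log\frac{\ell_1}{\ell_2}(\rho_1(\gamma))}{|\gamma|_{X,\infty}}\big)$, which is in general strictly smaller than $\alpha_{\rho_1,\rho_2}=\inf_{\gamma}\frac{\log\frac{\ell_1}{\ell_2}(\rho_2(\gamma))}{\log\frac{\ell_1}{\ell_2}(\rho_1(\gamma))}$, so you would prove H\"older continuity only with the wrong exponent. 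Your alternative way out --- that the elements $\gamma_{x,y}$ with $\gamma_{x,y}^{-1}(x,y,z)\in\mathcal{F}$ are ``uniformly quasi-geodesic'' so that Cartan and Jordan projections agree up to a constant --- is also unjustified: these coset representatives can be fat conjugates exactly as you describe, and if this claim were true the polynomial defect (and the dichotomy in the statement) would be superfluous. What is actually needed, and what the paper uses, is the \emph{simultaneous} approximation of singular values by eigenvalues after a bounded correction $\gamma\mapsto\gamma f$, $f$ in a fixed finite set, valid for both $\rho_1$ and $\rho_2$ at once and compatible with $|\cdot|_{X,\infty}$ (Theorem \ref{AMS-KP}, Proposition \ref{12finitesubset}); this gives the comparison with $m=0$ when $\rho_2$ is irreducible or $\{1,2\}$-Anosov (hence attainment), and for a general $1$-Anosov $\rho_2$ one still has to run the block-triangular induction on $\wedge^2\rho_2$ (the unnamed theorem of \S\ref{pf-semisimple-bd} and Corollary \ref{equality-exp-1}) to get the polynomial-defect version needed for the exact value $\alpha_F=\alpha_{\rho_1,\rho_2}$. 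As written, your proposal asserts these inputs rather than proving them, and attributes them to a statement that does not imply them.
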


Recently, Theorem \ref{conjugation} was used by Alex Nolte in order to define asymmetric metrics with interesting completeness properties \cite[Thm. B, C \& D]{Nolte} on the Hitchin component of a closed surface group.

\subsection{Related notions} The H\"older regularity of the projective limit set of particular classes of Anosov representations was studied in \cite{Guichard, Zhang-Zimmer}. For a strictly convex domain $\Omega\subset \mathbb{P}(\mathbb{R}^d)$ and a discrete group $\Gamma<\mathsf{PGL}_d(\mathbb{R})$ preserving and acting cocompactly on $\Omega$, Guichard in \cite[Thm. 11]{Guichard} established an explicit expression for the optimal H\"older regularity and $\beta$-convexity of the boundary $\partial \Omega$ in terms of the eigenvalue data of the dividing group\footnote{The inclusion $\Gamma \xhookrightarrow{} \mathsf{SL}_d(\mathbb{R})$ of the dividing group acting cocompactly on $\Omega$ is $1$-Anosov by \cite{Benoist-div}.}. Following \cite[Def. 7]{Guichard}, the notion of optimal $\beta$-convexity for the boundary $\partial\Omega$ is defined as follows \begin{equation}\label{beta-convex}\beta_{\Omega}:=\inf \Big \{\beta>0 \ | \ \exists \ C>0:  d_{\mathbb{P}}(x,y)\leq C\textup{dist}_{\mathbb{P}(\mathbb{K}^d)}(x,\xi^{-}(y))^{\frac{1}{\beta}} \ \forall \ x,y\in \partial\Omega\Big\}\end{equation} where $\xi^{-}(y)=T_{y}\partial\Omega$ is the tangent hyperplane associated to $y\in \partial \Omega$ and $\xi^{-}:\partial\Omega \rightarrow \mathsf{Gr}_{d-1}(\mathbb{R}^d)$ is the Anosov limit map of the $1$-Anosov subgroup $\Gamma<\mathsf{PGL}_d(\mathbb{R})$. Guichard proved the following formula in \cite[Thm. 22 (2)]{Guichard}: \begin{equation}\label{beta-eigenvalue}\beta_{\Omega}=\sup_{\gamma \in \Gamma_{\infty}}\frac{\log \frac{\ell_1}{\ell_d}(\gamma)}{\log \frac{\ell_1}{\ell_2}(\gamma)}.\end{equation} Let $d_{\Omega}$ be the Hilbert metric on $\Omega$ and $d_a$, $a>0$, a visual metric on the Gromov boundary $\partial \Omega$ of the  hyperbolic space $(\Omega,d_{\Omega})$, such that $d_{a}(x,y)\asymp a^{-(x\cdot y)_{\Omega}}$ and $(\ \ \cdot \ \ )_{\Omega}$ is the Gromov product with respect to $d_{\Omega}$. By using the calculation in \cite[Prop. 3.6.2]{thesis}, there is $c>1$ with the property: $$c^{-1}e^{-2(x\cdot y)}\leq \textup{dist}_{\mathbb{P}(\mathbb{K}^d)}(x,\xi^{-}(y))\textup{dist}_{\mathbb{P}(\mathbb{K}^d)}(y,\xi^{-}(x))\leq ce^{-2(x \cdot y)},\ \forall \ x,y\in \partial\Omega.$$ Hence, as a consequence of the previous estimate, the definition (\ref{beta-convex}) and Definition \ref{defexp}, we have $$\beta_{\Omega}^{-1}=(\log a)\alpha_{\xi}(d_{a},d_{\mathbb{P}}),$$ where $\xi:(\partial\Omega,d_a)\rightarrow (\partial \Omega,d_{\mathbb{P}})$ is the identity map (which coincides with the limit map of the inclusion $\Gamma\xhookrightarrow{} \mathsf{PGL}_d(\mathbb{R})$). Therefore, the formula for $\beta_{\Omega}$ in (\ref{beta-eigenvalue}) coincides with the formula provided by Theorem \ref{exponent} for the model space $(\Omega,d_{\Omega})$ of $\Gamma$ and the visual metric $d_a$ on $\partial \Omega$.

The optimal H\"older regularity for $\partial\Omega$ in the definition \cite[Def. 5]{Guichard} coincides with the H\"older exponent of the identity map $\xi^{-1}:(\partial\Omega,d_{\mathbb{P}})\rightarrow (\partial \Omega,d_{a})$, seen as the inverse of the Anosov limit map $\xi$. The formula in \cite[Thm. 22 (1)]{Guichard} provides the optimal \hbox{value of the H\"older exponent of $\xi^{-1}$.}
 
\par  Zhang--Zimmer in \cite{Zhang-Zimmer} established conditions under which the proximal limit set of an $1$-Anosov representation is a $C^{1+\alpha}$-submanifold of the ambient real projective space and provided a formula for the optimal value of $\alpha>0$ in terms of the eigenvalue data of the representation \cite[Thm. 1.12 \& Thm. 1.14]{Zhang-Zimmer}. In the case where the image of an $1$-Anosov representation $\rho(\Gamma)$ preserves a strictly convex domain $\Omega\subset \mathbb{P}(\mathbb{R}^d)$), they calculate the optimal value $\alpha>0$ such that the boundary $\partial \Omega$ is of class $C^{1+\alpha}$ along $\xi_{\rho}^1(\partial_{\infty}\Gamma)\subset \partial \Omega$, generalizing the result from \cite[Thm. 22 (2)]{Guichard}.


\medskip

\noindent \textbf{Acknowledgements.} I would like to thank Richard Canary, Sami Douba, Fran\c{c}ois Gu\'eritaud, Fanny Kassel, Alex Nolte, Rafael Potrie, Nicolas Tholozan and Anna Wienhard for interesting discussions, as well as Gregorio Baldi for his help with tex files. I would also like to to thank the referee for carefully reading the paper and their comments and suggestions. This project received funding from the European Research Council (ERC) under the European's Union Horizon 2020 research and innovation programme (ERC starting grant DiGGeS, grant agreement No 715982). 

\section{Background} \label{background}

\subsection{Gromov products.} Let $(X,d_X)$ be a proper geodesic metric space. The {\em Gromov product with respect to $x_0\in X$} is the map \hbox{$(\  \ \cdot \ \ )_{x_0}:X\times X \rightarrow [0,\infty)$} defined as follows: $$(x\cdot y)_{x_0}:=\frac{1}{2}\big(d_X(x,x_0)+d_X(y,y_0)-d_X(x,y)\big) \ \  x,y \in X.$$ For an isometry $\gamma:X \rightarrow X$ we set $|\gamma|_{X}:=d_X(\gamma x_0,x_0)$. The {\em stable translation length of $\gamma$} is $$|\gamma|_{X, \infty}=\lim_{n \rightarrow \infty}\frac{|\gamma^n|_{X}}{n}.$$  The metric space $(X,d_X)$ is called $\delta$-{\em Gromov hyperbolic} (see \cite{Gromov}) if $$(x\cdot y)_{x_0}\geq \min \big\{ (x \cdot z)_{x_0},(z \cdot y)_{x_0}\big \}-\delta \ \ \forall \ x_0,x,y,z \in X.$$ In this case the Gromov product extends to a map on the Gromov boundary of $X$ (denoted by $\partial_{\infty}X$), $( \ \cdot \ )_{x_0}:\partial_{\infty}X\times \partial_{\infty}X\rightarrow [0,\infty]$, defined as follows $$(x\cdot y)_{x_0}:=\sup \big\{\varliminf_{n,m \rightarrow \infty} (x_m \cdot y_n )_{x_0}\ \big |\ x=\lim_{m \rightarrow \infty} x_m, \ y=\lim_{n \rightarrow \infty} y_n \big\}.$$ Moreover, the boundary $\partial_{\infty}X$ is a compact metrizable space, e.g. see \cite{Gromov} and  \cite[Prop. 3.21, III]{BH}: there exist $r,a>1$ and a {\em visual metric} $d_a:\partial_{\infty}X\times \partial_{\infty}X \rightarrow [0,\infty)$ satisfying \begin{equation} \label{visualineq} r^{-1}a^{-(x\cdot y)_{x_0}}\leq d_a(x,y) \leq r a^{-(x\cdot y)_{x_0}} \ \ \forall \ x,y \in \partial_{\infty}X.\end{equation}

\noindent {\bf Convention:} Throughout this paper, $\Gamma$ is assumed to be a non-elementary word hyperbolic group. From now on, $(X,d_X)$ denotes a proper geodesic $\Gamma$-model space, equipped with a proper discontinuous, isometric and cocompact action by $\Gamma$. Given $x_0\in X$, the orbit map $(\Gamma,d_{\Gamma}) \rightarrow (X,d_X)$, $\gamma \mapsto \gamma x_0$, extends to a $\Gamma$-equivariant homeomorphism\footnote{In fact a bi-H\"older homeomorphism after fixing visual metrics on $\partial_{\infty}\Gamma$ and $\partial_{\infty}X$.} $\partial_{\infty}\Gamma \cong \partial_{\infty}X$. We denote by $\partial_{\infty}^{(2)}X$ (resp. $\partial_{\infty}^{(3)}X$) the set of 2-tuples (resp. 3-tuples) of distinct points of $\partial_{\infty}X$. The group $\Gamma$ acts properly discontinuously and cocompactly on $\partial_{\infty}^{(3)}X$ (e.g. see \cite{Gromov, BH}) and fix $\mathcal{F} \subset \partial_{\infty}^{(3)}X$ a compact subset such that $$\partial_{\infty}^{(3)}X=\bigcup_{\gamma \in \Gamma}\gamma \mathcal{F}.$$

We will need the following folklore lemma for the Gromov product.

\begin{lemma} \label{Gromovproduct1} Let $(X,d_X)$ be a $\delta$-Gromov hyperbolic model space for $\Gamma$ and fix $x_0 \in X$. For every $x,y \in \partial_{\infty}X$ and $\gamma \in \Gamma$ the following inequality holds $$\big|(\gamma x\cdot \gamma y)_{x_0}-|\gamma |_{X}-(x\cdot y)_{x_0}+(x\cdot \gamma^{-1}x_0)_{x_0}+(y \cdot \gamma^{-1}x_0)_{x_0} \big| \leq 4\delta.$$ \end{lemma}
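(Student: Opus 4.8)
The plan is to reduce the claimed inequality to a finite sum of basic Gromov-hyperbolicity estimates by repeatedly applying the four-point inequality. First I would recall the elementary identities for the Gromov product under an isometry. For points $p,q \in X$ in the interior, the cocycle identity
$$(\gamma p \cdot \gamma q)_{x_0} = (p \cdot q)_{\gamma^{-1}x_0}$$
holds exactly, since $\gamma$ is an isometry and $(\gamma p \cdot \gamma q)_{x_0} = \tfrac12(d_X(\gamma p, x_0) + d_X(\gamma q, x_0) - d_X(\gamma p,\gamma q)) = \tfrac12(d_X(p,\gamma^{-1}x_0) + d_X(q,\gamma^{-1}x_0) - d_X(p,q)) = (p\cdot q)_{\gamma^{-1}x_0}$. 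So the real content is to compare $(p\cdot q)_{\gamma^{-1}x_0}$ with $(p\cdot q)_{x_0}$, i.e. to estimate the change of base point.

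The key step is a change-of-basepoint estimate: for any $w \in X$ and any $p,q$,
$$\bigl| (p\cdot q)_{w} - (p\cdot q)_{x_0} + (p \cdot w)_{x_0} + (q\cdot w)_{x_0} - d_X(w,x_0)\bigr| \leq 2\delta$$
(the constant may need adjusting, but it is bounded in terms of $\delta$ only). This is a standard consequence of $\delta$-hyperbolicity: one writes both Gromov products in terms of distances, and the difference $(p\cdot q)_w - (p\cdot q)_{x_0}$ equals $\tfrac12(d_X(p,w) - d_X(p,x_0)) + \tfrac12(d_X(q,w)-d_X(q,x_0))$, which one rewrites using $d_X(p,w) - d_X(p,x_0) = d_X(w,x_0) - 2(p\cdot w)_{x_0}$ and similarly for $q$; this is in fact an exact identity for interior points. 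Setting $w = \gamma^{-1}x_0$ and noting $d_X(\gamma^{-1}x_0, x_0) = |\gamma|_X$ then gives the desired inequality for $p,q$ in the interior \emph{with no error term at all} in the interior case; the $4\delta$ slack appears only when one passes to boundary points $x,y \in \partial_\infty X$.

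The remaining work is to pass from interior sequences to the boundary. I would fix sequences $x_m \to x$, $y_n \to y$ in $X$ realizing (up to a bounded error governed by $\delta$) the Gromov products $(x\cdot y)_{x_0}$, $(x\cdot \gamma^{-1}x_0)_{x_0}$, $(y\cdot\gamma^{-1}x_0)_{x_0}$, and $(\gamma x\cdot\gamma y)_{x_0}$; the subtlety is that the same pair of sequences must be used to control all four products simultaneously. Using the definition of the extended Gromov product as a $\sup$ of $\varliminf$'s over approximating sequences, together with the standard fact that any two such approximating families give products agreeing up to $2\delta$ (see \cite{BH}), one checks that each of the four boundary products differs from the corresponding liminf of interior products by at most $2\delta$ (or $\delta$). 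Feeding these into the exact interior identity and summing the five error contributions yields the stated bound $4\delta$ (with possibly a worse multiple of $\delta$, which one then absorbs by enlarging $\delta$, or one tracks constants carefully; since the statement only asserts \emph{some} bound of the form $4\delta$, the cleanest route is to prove it with a universal constant $c\delta$ and remark this suffices, or to optimize the sequence choice).

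The main obstacle I anticipate is purely bookkeeping: coordinating a single pair of approximating sequences $(x_m),(y_n)$ that nearly realizes all four extended Gromov products at once, and keeping the accumulated additive errors down to exactly $4\delta$ rather than a larger multiple. The underlying algebra is an exact identity; all the hyperbolicity is hidden in the inequality relating a boundary Gromov product to the liminf along \emph{any} approximating sequence. I would handle this by first proving the interior identity cleanly, then invoking the comparison lemma for boundary products (which is where $\delta$ enters), and finally remarking that the precise value of the constant is immaterial for the applications, so one may as well state it as $4\delta$ after choosing sequences well, or replace $4\delta$ by $C\delta$ throughout.
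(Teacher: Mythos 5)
Your proposal is correct and follows essentially the same route as the paper: an exact identity for interior points (the paper writes it as $(\gamma x_n\cdot \gamma y_n)_{x_0}=|\gamma|_{X}+(x_n\cdot y_n)_{x_0}-(x_n\cdot \gamma^{-1}x_0)_{x_0}-(y_n\cdot \gamma^{-1}x_0)_{x_0}$, which is precisely your cocycle-plus-change-of-basepoint computation), combined with $\delta$-hyperbolicity to compare the boundary Gromov products with their values along one pair of approximating sequences. The paper handles your bookkeeping concern exactly as you suggest, choosing the sequences to realize $(\gamma x\cdot \gamma y)_{x_0}$ on the nose and, after passing to a subsequence, bounding the remaining discrepancies via hyperbolicity so that the total error is the stated $4\delta$.
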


\begin{proof} Let us choose sequences $(x_n)_{n\in \mathbb{N}}, (y_n)_{n\in \mathbb{N}}\subset X$ such that $\lim_n x_n=x$, $\lim_n y_n=y$ and $\lim_n (\gamma x_n\cdot \gamma y_n)_{x_0}=(\gamma x\cdot \gamma y)_{x_0}$. By Gromov hyperbolicity, if necessary after passing to a subsequence, we have that $$\varlimsup_{n \rightarrow \infty}\big(|(x\cdot \gamma^{-1}x_0)_{x_0}-(x_n \cdot \gamma^{-1}x_0)_{x_0}|+ |(y\cdot \gamma^{-1}x_0)_{x_0}-(y_n \cdot \gamma^{-1}x_0)_{x_0}|+ |(x\cdot y)_{x_0}-(x_n \cdot y_n)_{x_0}|\big)\leq 3\delta.$$
The lemma now follows by observing that for every $n\in \mathbb{N}$ we have the equality $$(\gamma x_n\cdot \gamma y_n)_{x_0}=|\gamma|_{X}+(x_n\cdot y_n)_{x_0}-(x_n\cdot \gamma^{-1}x_0)_{x_0}-(y_n\cdot \gamma^{-1}x_0)_{x_0}.$$  \end{proof}

For a hyperbolic isometry $\gamma:X\rightarrow X$ we denote by $\gamma^{+}\in \partial_{\infty}X$ the unique attracting fixed point of $\gamma$ in the Gromov boundary of $X$.

\begin{lemma} \label{Gromovproduct2} Let $(X,d_X)$ be a $\delta$-Gromov hyperbolic space and $\gamma:X\rightarrow X$ be a hyperbolic isometry. Fix $x_0 \in X$. Then the following inequality holds: $$\big|2(\gamma^{+}\cdot \gamma^{-1}x_0)_{x_0}-\big(|\gamma|_{X}-|\gamma|_{X,\infty}\big) \big|\leq 2\delta.$$\end{lemma}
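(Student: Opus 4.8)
The plan is to estimate the Gromov product $(\gamma^{+}\cdot \gamma^{-1}x_{0})_{x_{0}}$ by choosing a convenient sequence converging to $\gamma^{+}$, namely the orbit points $\gamma^{n}x_{0}$, $n\to\infty$. Concretely, $(\gamma^{+}\cdot\gamma^{-1}x_{0})_{x_{0}}$ is (up to an error controlled by $\delta$) the limit inferior of $(\gamma^{n}x_{0}\cdot\gamma^{-1}x_{0})_{x_{0}}$ as $n\to\infty$, so I would first reduce the statement to estimating this finite Gromov product. Expanding by definition,
\begin{equation*}
2(\gamma^{n}x_{0}\cdot\gamma^{-1}x_{0})_{x_{0}}=d_{X}(\gamma^{n}x_{0},x_{0})+d_{X}(\gamma^{-1}x_{0},x_{0})-d_{X}(\gamma^{n}x_{0},\gamma^{-1}x_{0}).
\end{equation*}
Here $d_{X}(\gamma^{-1}x_{0},x_{0})=|\gamma|_{X}$ and $d_{X}(\gamma^{n}x_{0},\gamma^{-1}x_{0})=d_{X}(\gamma^{n+1}x_{0},x_{0})=|\gamma^{n+1}|_{X}$, using that $\gamma$ acts by isometries. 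Thus the quantity becomes $|\gamma^{n}|_{X}+|\gamma|_{X}-|\gamma^{n+1}|_{X}$.

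Next I would pass to the limit in $n$. Since $|\gamma^{n}|_{X}/n\to|\gamma|_{X,\infty}$ and, by subadditivity together with standard facts on stable translation length in Gromov hyperbolic spaces, the sequence $|\gamma^{n}|_{X}-n|\gamma|_{X,\infty}$ converges (up to a bounded oscillation of size $O(\delta)$; more precisely $|\gamma^{n}|_{X}=n|\gamma|_{X,\infty}+O(\delta)$ uniformly once $\gamma$ is hyperbolic), the difference $|\gamma^{n}|_{X}-|\gamma^{n+1}|_{X}$ is $-|\gamma|_{X,\infty}+O(\delta)$. Plugging this in yields $2(\gamma^{n}x_{0}\cdot\gamma^{-1}x_{0})_{x_{0}}=|\gamma|_{X}-|\gamma|_{X,\infty}+O(\delta)$ for all large $n$, and taking the limit inferior gives the same estimate for $2(\gamma^{+}\cdot\gamma^{-1}x_{0})_{x_{0}}$ with a constant like $2\delta$ after bookkeeping the two sources of error. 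I would cite the relevant coarse-geometry fact (e.g. that for a hyperbolic isometry the orbit $\{\gamma^{n}x_{0}\}$ lies within bounded Hausdorff distance of a $\gamma$-invariant quasi-geodesic axis, so $|\gamma^{n}|_{X}$ is within $O(\delta)$ of $n|\gamma|_{X,\infty}$) from \cite{Gromov} or \cite{BH} rather than reprove it.

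The main obstacle is bookkeeping the constants: one has to make sure that (a) the defect in approximating the boundary Gromov product $(\gamma^{+}\cdot\gamma^{-1}x_{0})_{x_{0}}$ by the sequence of finite products and (b) the defect in $|\gamma^{n}|_{X}\approx n|\gamma|_{X,\infty}$ both collapse into a single clean bound of $2\delta$ as stated, rather than some larger multiple of $\delta$. This may require choosing the approximating sequence carefully (for instance using that $\liminf_{n}$ in the definition of the extended Gromov product already absorbs one $\delta$, and that the monotone-type behavior of $|\gamma^{n}|_{X}$ keeps the second error one-sided) or invoking a sharper form of the translation-length estimate. If the clean constant $2\delta$ turns out to be delicate, an alternative is to argue via the Busemann-type interpretation: $(\gamma^{+}\cdot\gamma^{-1}x_{0})_{x_{0}}$ measures how long the geodesic rays from $x_{0}$ to $\gamma^{+}$ and from $x_{0}$ through $\gamma^{-1}x_{0}$ fellow-travel, and both rays shadow the axis of $\gamma$, so the fellow-travelling length is essentially $\tfrac12(|\gamma|_{X}-|\gamma|_{X,\infty})$ up to the hyperbolicity constant; this geometric picture makes the factor of $2$ and the single $\delta$ transparent.
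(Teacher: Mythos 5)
Your starting identity is the same as the paper's: $2(\gamma^{n}x_{0}\cdot\gamma^{-1}x_{0})_{x_{0}}=|\gamma^{n}|_{X}+|\gamma|_{X}-|\gamma^{n+1}|_{X}$, and the plan of letting $n\to\infty$ is the right one. The gap is in the step where you control the increments. The fact you invoke, that $|\gamma^{n}|_{X}=n|\gamma|_{X,\infty}+O(\delta)$ uniformly, is false: shadowing a quasi-axis $A_{\gamma}$ only gives $|\gamma^{n}|_{X}=n|\gamma|_{X,\infty}+2\,\mathrm{dist}(x_{0},A_{\gamma})+O(\delta)$ (already in a tree one has $|\gamma^{n}|_{X}=n|\gamma|_{X,\infty}+2\,\mathrm{dist}(x_{0},A_{\gamma})$ exactly), so the additive error depends on the basepoint and on $\gamma$, not on $\delta$ alone. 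Moreover, even the corrected statement that $|\gamma^{n}|_{X}-n|\gamma|_{X,\infty}$ stays bounded does not by itself imply that the individual increments $|\gamma^{n+1}|_{X}-|\gamma^{n}|_{X}$ lie within $O(\delta)$ of $|\gamma|_{X,\infty}$; boundedness only controls these differences in the Ces\`aro sense. So the crucial assertion ``$|\gamma^{n}|_{X}-|\gamma^{n+1}|_{X}=-|\gamma|_{X,\infty}+O(\delta)$'' is not justified as written, and even with a sharper shadowing lemma the constant you would obtain is an unspecified multiple of $\delta$ rather than the stated $2\delta$ --- a point you flag yourself but do not resolve (the Busemann-type alternative is likewise only heuristic at the level of ``up to the hyperbolicity constant'').

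The paper closes exactly this gap with no axis or shadowing input: from $|\gamma|_{X,\infty}=\lim_{n}|\gamma^{n}|_{X}/n$ it extracts only the elementary inequalities $\varliminf_{n}(|\gamma^{n+1}|_{X}-|\gamma^{n}|_{X})\leq|\gamma|_{X,\infty}\leq\varlimsup_{n}(|\gamma^{n+1}|_{X}-|\gamma^{n}|_{X})$, then argues along a subsequence $(k_{n})$ realizing the $\varlimsup$ to get the upper bound on $(\gamma^{+}\cdot\gamma^{-1}x_{0})_{x_{0}}$ and along one realizing the $\varliminf$ to get the lower bound, paying a single $\delta$ in each direction through one application of the four-point hyperbolicity inequality (the minimum is resolved because $(\gamma^{+}\cdot\gamma^{k_{n}}x_{0})_{x_{0}}\to\infty$). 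This also handles the passage from finite Gromov products to the boundary product, which in your sketch is left to ``bookkeeping.'' To repair your proof, replace the increment-convergence claim by this subsequence/Ces\`aro argument; as it stands there is a genuine gap and the clean constant $2\delta$ is not established.
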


\begin{proof} By assumption, for every $x,y,z \in X \cup \partial_{\infty}X$ we have $$(x\cdot y)_{x_0} \geqslant \min \big \{(x\cdot z)_{x_0}, (z \cdot y)_{x_0} \big \}-\delta.$$ Since $|\gamma|_{X,\infty}=\lim_n\frac{|\gamma^n|_X}{n}$, it is not hard to check that $$\varliminf_{n \rightarrow \infty}\big(|\gamma^{n+1}|_{X}-|\gamma^n|_{X}\big)\leqslant|\gamma|_{X,\infty}\leqslant \varlimsup_{n \rightarrow \infty}\big(|\gamma^{n+1}|_{X}-|\gamma^n|_{X}\big)$$ and fix a sequence $(k_n)_{n \in \mathbb{N}}\subset \mathbb{N}$ with $\varlimsup_{n}(|\gamma^{n+1}|_{X}-|\gamma^n|_{X})=\lim_{n}(|\gamma^{k_n+1}|_{X}-|\gamma^{k_n}|_{X})$. Now note that $\lim_n ((\gamma^{+} \cdot \gamma^{k_n}x_0)_{x_0}-(\gamma^{+} \cdot \gamma^{-1}x_0)_{x_0})=\infty$, so for large $n$ we deduce \begin{align*} \big(\gamma^{+}\cdot \gamma^{-1}x_0 \big)_{x_0} & \leqslant \frac{1}{2}\lim_{n \rightarrow \infty}\big(|\gamma|_{X}+\big|\gamma^{k_n}\big|_{X}-\big|\gamma^{k_n+1}\big|_{X}\big)+\delta \leqslant \frac{1}{2} \big( |\gamma|_{X}-|\gamma|_{X,\infty}\big)+\delta.\end{align*} Similarly, if $(m_n)_{n \in \mathbb{N}}$ is a sequence with \hbox{$\varliminf_{n}\big(|\gamma^{n+1}|_{X}-|\gamma^n|_{X}\big)=\lim_{n}\big(|\gamma^{m_n+1}|_{X}-|\gamma^{m_n}|_{X}\big)$, then} \begin{align*} \big(\gamma^{+}\cdot \gamma^{-1}x_0 \big)_{x_0}& \geqslant \lim_{n\rightarrow \infty}\big(\gamma^{m_n}x_0\cdot \gamma^{-1}x_0\big)_{x_0}-\delta \geqslant \frac{1}{2} \big(|\gamma|_{X}-|\gamma|_{X,\infty}\big)-\delta.\end{align*} The inequality follows and the proof of the lemma is complete. \end{proof}

Denote by $B_{\epsilon}(w_0)=\{y\in \partial_{\infty}X: d_a(y,w_0)<\epsilon\}$ the open ball of radius $\epsilon>0$ centered at $w_0\in \partial_{\infty}X$ with respect to $d_a$. We will need the following lemma.

\begin{lemma}\label{triple} Let $(X,d_X)$ be a $\delta$-Gromov hyperbolic model space for $\Gamma$. Fix $z,w_0\in \partial_{\infty}X$ with $z\neq w_0$. There exist $C, \varepsilon>0$, depending on the choice of $z,w_0\in \partial_{\infty}X$, with the property: if $x,y\in B_{\varepsilon}(w_0)$ are distinct and $\gamma\in \Gamma$ is an element such that $(x,y,z)=\gamma(b_1,b_2,b_3)$ for some $(b_1,b_2,b_3)\in \mathcal{F},$ then \begin{align*}\big|(x\cdot y)_{x_0}-|\gamma|_{X} \big|\leq C, \ \big|(b_3\cdot \gamma^{-1}x_0)_{x_0}-|\gamma|_{X}\big|\leq C.\end{align*} \end{lemma}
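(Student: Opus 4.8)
The plan is to unwind the stated bounds via Lemma \ref{Gromovproduct1}, which relates $(\gamma x \cdot \gamma y)_{x_0}$ to $|\gamma|_X$, $(x\cdot y)_{x_0}$ and the two ``shadow'' Gromov products $(x\cdot \gamma^{-1}x_0)_{x_0}$, $(y\cdot \gamma^{-1}x_0)_{x_0}$. Writing $(x,y,z)=\gamma(b_1,b_2,b_3)$ with $(b_1,b_2,b_3)\in\mathcal F$, so $b_i=\gamma^{-1}\cdot(\text{resp. }x,y,z)$, the first inequality $\big|(x\cdot y)_{x_0}-|\gamma|_X\big|\le C$ is equivalent, up to the $4\delta$ error of Lemma \ref{Gromovproduct1}, to showing that $(b_1\cdot b_2)_{x_0}$ and $(b_i\cdot\gamma^{-1}x_0)_{x_0}$ (for $i=1,2$) stay bounded, while the second inequality $\big|(b_3\cdot\gamma^{-1}x_0)_{x_0}-|\gamma|_X\big|\le C$ requires an \emph{upper} bound on $(z\cdot x)_{x_0}$ and $(z\cdot y)_{x_0}$, again via Lemma \ref{Gromovproduct1} applied to the pair $(x,z)$ or equivalently the pair $(b_1,b_3)$.

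First I would fix the compact set $\mathcal F\subset\partial_\infty^{(3)}X$ and note that the functions $(b_1\cdot b_2)_{x_0}$, $(b_1\cdot b_3)_{x_0}$, $(b_2\cdot b_3)_{x_0}$ are continuous and finite on $\mathcal F$ (distinct points have finite Gromov product), hence uniformly bounded above by some constant $M=M(\mathcal F)$ on $\mathcal F$; this is the only place compactness of $\mathcal F$ enters. Next, the key geometric input is that $z\neq w_0$: choosing $\varepsilon>0$ small enough that $B_\varepsilon(w_0)$ is bounded away from $z$ in $d_a$, the visual-metric inequality \eqref{visualineq} forces $(x\cdot z)_{x_0}\le \kappa$ and $(y\cdot z)_{x_0}\le\kappa$ for all $x,y\in B_\varepsilon(w_0)$, with $\kappa=\kappa(z,w_0)$; similarly shrinking $\varepsilon$ (or not) we keep $(x\cdot y)_{x_0}$ under no a priori control from above (indeed it can be large, which is exactly what makes $|\gamma|_X$ large). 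Then I would run Lemma \ref{Gromovproduct1} three times: on the pair $(b_1,b_3)\mapsto(x,z)$ to get $\big||\gamma|_X - (b_1\cdot b_3)_{x_0} - (x\cdot z)_{x_0} + (b_1\cdot\gamma^{-1}x_0)_{x_0} + (b_3\cdot\gamma^{-1}x_0)_{x_0}\big|\le 4\delta$; on the pair $(b_2,b_3)\mapsto(y,z)$ analogously; and on $(b_1,b_2)\mapsto(x,y)$. From the first two, combined with $(x\cdot z)_{x_0},(y\cdot z)_{x_0}\le\kappa$ and $(b_i\cdot b_3)_{x_0}\le M$, and the elementary fact that all Gromov products are $\ge 0$, I extract that both $(b_1\cdot\gamma^{-1}x_0)_{x_0}$ and $(b_2\cdot\gamma^{-1}x_0)_{x_0}$ are bounded above by $4\delta+\kappa+M$ (since their sum with the nonnegative quantity $(b_3\cdot\gamma^{-1}x_0)_{x_0}$ is bounded by $|\gamma|_X$ minus... — more precisely, add the two relations and use $(b_3\cdot\gamma^{-1}x_0)_{x_0}\ge0$), and symmetrically that $(b_3\cdot\gamma^{-1}x_0)_{x_0}$ is within $4\delta+\kappa+M$ of $|\gamma|_X$, giving the second claimed inequality with $C:=4\delta+\kappa+M$ (possibly enlarged). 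Feeding the boundedness of $(b_1\cdot\gamma^{-1}x_0)_{x_0}$ and $(b_2\cdot\gamma^{-1}x_0)_{x_0}$ back into Lemma \ref{Gromovproduct1} for the pair $(x,y)$, together with $(b_1\cdot b_2)_{x_0}\le M$ and $(x\cdot y)_{x_0}\ge0$, yields $\big|(x\cdot y)_{x_0}-|\gamma|_X\big|\le 4\delta+2(4\delta+\kappa+M)+M$, which is the first inequality after absorbing constants into $C$.

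The main obstacle — and the step deserving care — is the lower bound showing $(b_i\cdot\gamma^{-1}x_0)_{x_0}$ for $i=1,2$ cannot be large: a priori $\gamma^{-1}x_0$ could lie ``deep'' in the direction of $b_1$ or $b_2$. The resolution is precisely the three-point / transversality mechanism: $(b_1\cdot b_3)_{x_0}\le M$ and $(b_2\cdot b_3)_{x_0}\le M$ on the compact $\mathcal F$, so $\gamma^{-1}x_0$ cannot be simultaneously close (in the Gromov-product sense) to $b_3$ and to $b_1$ (or $b_2$), by the ultrametric-type inequality $(b_1\cdot b_3)_{x_0}\ge\min\{(b_1\cdot\gamma^{-1}x_0)_{x_0},(\gamma^{-1}x_0\cdot b_3)_{x_0}\}-\delta$; since $(b_3\cdot\gamma^{-1}x_0)_{x_0}$ is shown to be \emph{large} (comparable to $|\gamma|_X$), this min must be realized by $(b_1\cdot\gamma^{-1}x_0)_{x_0}$, forcing it $\le M+\delta$. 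One has to order the two conclusions correctly (first the ``$b_3$ direction is long'' estimate, then deduce the ``$b_1,b_2$ directions are short''), but no estimate beyond repeated use of $\delta$-hyperbolicity, \eqref{visualineq}, and compactness of $\mathcal F$ is needed.
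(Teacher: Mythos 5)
Your toolkit is the same as the paper's (bound $(x\cdot z)_{x_0}$, $(y\cdot z)_{x_0}$ via the visual metric, bound the Gromov products on the compact set $\mathcal F$, and feed these into Lemma \ref{Gromovproduct1} together with the $\delta$-hyperbolicity inequality), but the order in which you run the estimates has a genuine gap. In your second paragraph the claimed extraction is false: the relations of Lemma \ref{Gromovproduct1} for the pairs $(b_1,b_3)$ and $(b_2,b_3)$ only tell you that the \emph{sums} $(b_1\cdot\gamma^{-1}x_0)_{x_0}+(b_3\cdot\gamma^{-1}x_0)_{x_0}$ and $(b_2\cdot\gamma^{-1}x_0)_{x_0}+(b_3\cdot\gamma^{-1}x_0)_{x_0}$ lie within $M+\kappa+4\delta$ of $|\gamma|_X$; adding them and using $(b_3\cdot\gamma^{-1}x_0)_{x_0}\geq 0$ gives only $(b_1\cdot\gamma^{-1}x_0)_{x_0}+(b_2\cdot\gamma^{-1}x_0)_{x_0}\leq 2|\gamma|_X+O(1)$, not the uniform bound $4\delta+\kappa+M$ you assert, and it does not determine how each sum splits between its two terms: a priori $\gamma^{-1}x_0$ could point towards $b_1$, making $(b_1\cdot\gamma^{-1}x_0)_{x_0}$ comparable to $|\gamma|_X$ and $(b_3\cdot\gamma^{-1}x_0)_{x_0}$ bounded, and the same algebra would be satisfied. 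Your third paragraph acknowledges exactly this, but the fix you propose is circular as ordered: you want to deduce that $(b_1\cdot\gamma^{-1}x_0)_{x_0}$ and $(b_2\cdot\gamma^{-1}x_0)_{x_0}$ are bounded \emph{from} the fact that $(b_3\cdot\gamma^{-1}x_0)_{x_0}$ is comparable to $|\gamma|_X$, yet within your scheme the only way to obtain the latter is through the $(b_1,b_3)$ or $(b_2,b_3)$ relation, which requires already knowing that $(b_1\cdot\gamma^{-1}x_0)_{x_0}$ or $(b_2\cdot\gamma^{-1}x_0)_{x_0}$ is bounded.

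The missing idea, which is the paper's first move, is to apply the $\delta$-hyperbolicity inequality to the triple $b_1,b_2,\gamma^{-1}x_0$: since $(b_1\cdot b_2)_{x_0}\leq M$ on $\mathcal F$, one has $\min\{(b_1\cdot\gamma^{-1}x_0)_{x_0},(b_2\cdot\gamma^{-1}x_0)_{x_0}\}\leq M+\delta$, so one of the two (say the one attached to $b_1$, after swapping $x$ and $y$ if necessary) is controlled \emph{unconditionally}. Only then does the $(b_1,b_3)$ relation give $\big|(b_3\cdot\gamma^{-1}x_0)_{x_0}-|\gamma|_X\big|\leq M+\kappa+5\delta$, and only then can your min-argument with $(b_2\cdot b_3)_{x_0}\leq M$ force $(b_2\cdot\gamma^{-1}x_0)_{x_0}\leq M+\delta$. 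Moreover this last step needs $(b_3\cdot\gamma^{-1}x_0)_{x_0}>M+\delta$, i.e. a lower bound on $|\gamma|_X$, which is not automatic: the paper obtains it by shrinking $\varepsilon$ using properness of the $\Gamma$-action on $\partial_{\infty}^{(3)}X$ (alternatively one must handle the configurations with $|\gamma|_X$ bounded by a separate, explicit case analysis). So your closing claim that nothing beyond $\delta$-hyperbolicity, \eqref{visualineq} and compactness of $\mathcal F$ is needed misses both the initial $(b_1,b_2)$-min step that breaks the circularity and the properness step that guarantees $|\gamma|_X$ is large enough to resolve the minima; with those two ingredients inserted in the right order, your outline becomes the paper's proof.
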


\begin{proof} Let $r,a>1$ satisfying (\ref{visualineq}) and set $\varepsilon:=\frac{1}{2}d_a(w_0,z)$. For every $x\in B_{\varepsilon}(w_0)$ we have that $$ra^{-(x\cdot z)_{x_0}}\geq d_a(x,z)\geq \frac{1}{2}d_a(w_0,z).$$ In particular, for every $x\in B_{\varepsilon}(w_0)$ we have $$(x\cdot z)_{x_0}\leq\frac{1}{\log a}\log \frac{2r}{d_a(w_0,z)}.$$ Let us also set $\mathcal{D}:=\max\big\{\max \big\{(x_{i}\cdot x_{j})_{x_0}: (x_1,x_2,x_3)\in \mathcal{F}, i\neq j\big\}, \frac{1}{\log a}\log \frac{2r}{d_a(w_0,z)}\big\}.$
Since $\Gamma$ acts properly on $\partial_{\infty}^{(3)}X$, by shrinking $\varepsilon>0$ if necessary, we may assume that every $g\in \Gamma$ with the property that $(x,y,z)\in g\mathcal{F}$, for some $x,y\in B_{\varepsilon}(w_0)$, satisfies $|g|_{X}>7\delta+4\mathcal{D}$.
\hbox{Observe that} $$\min\big \{(b_1 \cdot \gamma^{-1}x_0)_{x_0},(b_2 \cdot \gamma^{-1}x_0)_{x_0}\big \}\leq (b_1\cdot b_2)_{x_0}+\delta \leq \mathcal{D}+\delta$$ and without loss of generality we may assume $(b_1\cdot \gamma^{-1}x_0)_{x_0}\leq \mathcal{D}+\delta$. By using Lemma \ref{Gromovproduct1} we obtain the following inequalities: \begin{align*} \big|(x\cdot y)_{x_0}-|\gamma|_{X}-(b_1\cdot b_2)_{x_0}+(b_1\cdot \gamma^{-1}x_0)_{x_0}+(b_2\cdot \gamma^{-1}x_0)_{x_0}\big|&\leq 4\delta \\ \big|(x\cdot z)_{x_0}-|\gamma|_{X}-(b_1\cdot b_3)_{x_0}+(b_1 \cdot \gamma^{-1}x_0)_{x_0}+(b_3 \cdot \gamma^{-1}x_0)_{x_0}\big|&\leq 4\delta. \end{align*} Note that since $\max\big \{(b_1\cdot b_3)_{x_0},(x\cdot z)_{x_0}\big \}\leq \mathcal{D}$ and $(b_1\cdot \gamma^{-1}x_0)_{x_0}\leq \delta+\mathcal{D}$, the second inequality shows that $$\big|(b_3 \cdot \gamma^{-1}x_0)_{x_0}-|\gamma|_{X}\big|\leq (b_1\cdot b_3)_{x_0}+(b_1\cdot \gamma^{-1}x_0)_{x_0}+(x\cdot z)_{x_0}+4\delta\leq 5\delta+3\mathcal{D}.$$ In particular, $(b_3 \cdot \gamma^{-1}x_0)_{x_0}>2\delta+\mathcal{D}$. Furthermore, observe that $$\min\big \{(b_2 \cdot \gamma^{-1}x_0)_{x_0},(b_3\cdot \gamma^{-1}x_0)_{x_0}\big \}\leq (b_2\cdot b_3)_{x_0}+\delta \leq \delta+\mathcal{D},$$ hence we necessarily have $(b_3\cdot \gamma^{-1}x_0)_{x_0}>(b_2\cdot \gamma^{-1}x_0)_{x_0}$ and $(b_2\cdot \gamma^{-1}x_0)_{x_0}\leq \delta+\mathcal{D}$. Finally, we conclude the estimate $$\big|(x\cdot y)_{x_0}-|\gamma|_{X}\big|\leq (b_1\cdot b_2)_{x_0}+(b_1 \cdot \gamma^{-1}x_0)_{x_0}+(b_2\cdot \gamma^{-1}x_0)_{x_0}+ 4\delta\leq 6\delta+3\mathcal{D}.$$ The statement of the lemma holds true by considering $C:=6\delta+3\mathcal{D}$. \end{proof}

\subsection{Cartan decomposition} Let $\mathbb{K}=\mathbb{R},\mathbb{C}$ and $(e_1, \ldots ,e_d)$ the canonical basis of $\mathbb{K}^d$, where $e_i$ is the vector all of whose cordinates are $0$ except with a $1$ on the $i$-th coordinate. Denote by $\langle \cdot,\cdot \rangle$ the standard Hermitian inner product on $\mathbb{K}^d$ and by $\mathsf{K}_d$, where $\mathsf{K}_d=\mathsf{O}(d)$ when $\mathbb{K}=\mathbb{R}$ and $\mathsf{K}_d=\mathsf{U}(d)$ when $\mathbb{K}=\mathbb{C}$, the corresponding maximal compact subgroup of $\mathsf{GL}_d(\mathbb{K})$ to the choice of the inner product.

For a matrix $g \in \mathsf{GL}_d(\mathbb{K})$ we denote by $\sigma_1(g) \geq \ldots \geq \sigma_d(g)$ the singular values of $g$ in non-increasing order and by $\ell_1(g) \geq \ldots \geq \ell_d(g)$ the moduli of the eigenvalues of $g$ in non-increasing order. We recall that $\sigma_i(g)=\sqrt{\ell_i(gg^{\ast})}$ for $1 \leq i \leq d$, where $g^\ast$ denotes the conjugate transpose matrix of $g$. The standard Cartan decomposition of $\mathsf{GL}_{d}(\mathbb{K})$ is \begin{equation*} \mathsf{GL}_d(\mathbb{K})=\mathsf{K}_d \exp \big( \textup{diag}^{+}(d) \big) \mathsf{K}_d\end{equation*} where $\textup{diag}^{+}(d)=\big \{\textup{diag}\big(a_1, \ldots, a_{d} \big): a_1 \geq \ldots \geq a_d \big\}.$ The Cartan projection is the continuous, proper and surjective map $\mu: \mathsf{GL}_d(\mathbb{K}) \rightarrow \textup{diag}^{+}(d),$ $$\mu(g)=\big(\log \sigma_1(g), \ldots, \log \sigma_d(g)\big).$$

\subsection{Anosov representations} \label{definitionAnosov} We use the following theorem as a definition for $k$-Anosov representations, $1\leq k\leq d-1$, into $\mathsf{GL}_d(\mathbb{K})$. For the dynamical definition of an Anosov representation into linear semisimple Lie groups we refer the reader to \cite{Labourie} and \cite[Def. 2.10]{GW}.

\begin{theorem} \label{Anosov} \textup{(}\cite{BPS,KLP2,KP}\textup{)} Let $\mathsf{\Gamma}$ be a finitely generated group and $|\cdot|_{\mathsf{\Gamma}}:\mathsf{\Gamma} \rightarrow \mathbb{N}$ a word length function induced by some finite generating subset of $\mathsf{\Gamma}$. Suppose that $\rho:\mathsf{\Gamma} \rightarrow \mathsf{GL}_d(\mathbb{K})$ is a representation and fix $1 \leq k \leq d-1$. Then the following conditions are equivalent:
\medskip

\noindent \textup{(i)} $\mathsf{\Gamma}$ is word hyperbolic and $\rho$ is ${k}$-Anosov.\\
\noindent \textup{(ii)} There exist $R,\varepsilon>0$ such that $\frac{\sigma_{k}}{\sigma_{k+1}}(\rho(\gamma)) \geq e^{\varepsilon|\gamma|_{\mathsf{\Gamma}}-R}$ for every $\gamma \in \mathsf{\Gamma}$.\\
\noindent \textup{(iii)} $\mathsf{\Gamma}$ is word hyperbolic and there exists $c>0$ such that $ \frac{\ell_{k}}{\ell_{k+1}}(\rho(\gamma))  \geq e^{c|\gamma|_{\mathsf{\Gamma},\infty}}$ for every $\gamma \in \mathsf{\Gamma}$.\end{theorem}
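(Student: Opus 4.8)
The assertion is the by-now-standard characterization of $k$-Anosov representations into $\mathsf{GL}_d(\mathbb{K})$, pieced together from \cite{BPS}, \cite{KLP2} and \cite{KP}; the plan is to close a cycle of implications through the ``analytic'' equivalence $\mathrm{(i)}\Leftrightarrow\mathrm{(ii)}$ and the ``spectral'' equivalence $\mathrm{(ii)}\Leftrightarrow\mathrm{(iii)}$. For $\mathrm{(i)}\Rightarrow\mathrm{(ii)}$ I would unwind the dynamical definition: over the geodesic flow space of $\Gamma$ the flat $\mathbb{K}^d$-bundle splits $\rho$-equivariantly as $\Xi^{k}\oplus\Xi^{d-k}$ with the flow uniformly contracting $\mathrm{Hom}(\Xi^{d-k},\Xi^{k})$. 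Evaluating this contraction along a geodesic from $x_0$ to $\gamma x_0$, and recalling that $\mu(\rho(\gamma))$ records $\log\sigma_1(\rho(\gamma))\geq\cdots\geq\log\sigma_d(\rho(\gamma))$, the uniform exponential contraction over a \emph{compact} base produces $R,\varepsilon>0$ with $\log\tfrac{\sigma_k}{\sigma_{k+1}}(\rho(\gamma))\geq\varepsilon|\gamma|_\Gamma-R$, as carried out in \cite{GGKW, KLP2}.

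The substantial direction $\mathrm{(ii)}\Rightarrow\mathrm{(i)}$ is the theorem of Bochi--Potrie--Sambarino \cite{BPS}: condition $\mathrm{(ii)}$ says that the cocycle $\gamma\mapsto\rho(\gamma)$ over the word metric admits a \emph{dominated splitting} of index $k$ with a linear gap. First one shows $\Gamma$ is word hyperbolic --- the gap forces the singular-value $k$-planes $U_k(\rho(\gamma))$ to vary in a strongly contracting fashion, so the orbit map into $\mathsf{Gr}_k(\mathbb{K}^d)$ turns words in $\Gamma$ into uniform Morse quasigeodesics, and the local-to-global and stability lemmas of \cite{BPS} then force hyperbolicity of $\Gamma$. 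Then one builds the limit maps $\xi_\rho^{k},\xi_\rho^{d-k}$ as limits of $U_k(\rho(\gamma_n))$, $U_{d-k}(\rho(\gamma_n))$ along $\gamma_n\to\xi\in\partial_\infty\Gamma$, checks continuity, transversality and equivariance, and verifies the flow-contraction inequality, recovering the dynamical Anosov condition.

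The spectral equivalence splits into an easy and a hard half. For $\mathrm{(ii)}\Rightarrow\mathrm{(iii)}$ hyperbolicity is already supplied by $\mathrm{(ii)}\Rightarrow\mathrm{(i)}$; applying $\mathrm{(ii)}$ to $\gamma^n$, dividing by $n$ and letting $n\to\infty$, and using $\lim_n\tfrac1n\log\sigma_i(\rho(\gamma^n))=\log\ell_i(\rho(\gamma))$ (Gelfand's formula applied to $\wedge^i\rho$) together with $\lim_n\tfrac{|\gamma^n|_\Gamma}{n}=|\gamma|_{\Gamma,\infty}$, one obtains $\log\tfrac{\ell_k}{\ell_{k+1}}(\rho(\gamma))\geq\varepsilon|\gamma|_{\Gamma,\infty}$, i.e. $\mathrm{(iii)}$ holds with $c=\varepsilon$. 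The converse $\mathrm{(iii)}\Rightarrow\mathrm{(ii)}$ is the theorem of Kassel--Potrie \cite{KP}, and it is the main obstacle: eigenvalue moduli only see conjugacy classes through $|\cdot|_{\Gamma,\infty}$, whereas $\mathrm{(ii)}$ constrains every group element, so one must upgrade a gap on conjugacy classes to an unconditional linear gap for $\tfrac{\sigma_k}{\sigma_{k+1}}$. The route is: in a word hyperbolic group every $\gamma$ is conjugate to an element lying within bounded distance of a cyclically reduced $\gamma'$ with $|\gamma'|_{\Gamma,\infty}\geq|\gamma'|_\Gamma-D$ for a uniform $D$; combining this with subadditivity of $\mu$ along $\wedge^k\rho(\gamma^n)$ and a uniform estimate $\big\|\mu(\rho(\gamma))-\tfrac1n\mu(\rho(\gamma^n))\big\|=O(1)$, valid once the gap is already large, lets one bootstrap from $\mathrm{(iii)}$ to the unconditional bound; alternatively one transfers positivity from the Benoist limit cone $\overline{\mathbb{R}_{>0}\,\lambda(\rho(\Gamma))}$ to the Cartan limit cone $\overline{\mathbb{R}_{>0}\,\mu(\rho(\Gamma))}$, which coincide when $\Gamma$ is hyperbolic.

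In summary, $\mathrm{(i)}\Rightarrow\mathrm{(ii)}\Rightarrow\mathrm{(i)}$ together with $\mathrm{(ii)}\Leftrightarrow\mathrm{(iii)}$ give all the stated equivalences. I expect the two genuinely hard steps to be $\mathrm{(ii)}\Rightarrow\mathrm{(i)}$, which must manufacture Gromov hyperbolicity of an abstract group out of a purely linear-algebraic gap condition via the full local-to-global machinery for dominated splittings, and $\mathrm{(iii)}\Rightarrow\mathrm{(ii)}$, which must propagate a gap from conjugacy-class data to every group element using the fine interplay between stable translation length, cyclic reduction in hyperbolic groups, and stability of Morse quasigeodesics.
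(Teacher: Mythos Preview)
The paper does not give a proof of this theorem at all: it is quoted from the literature and used as a working definition, with the single attribution sentence ``The equivalence $\textup{(i)} \Leftrightarrow \textup{(ii)}$ was established by Kapovich--Leeb--Porti \cite{KLP2} and independently by Bochi--Potrie--Sambarino \cite{BPS}, while $\textup{(iii)} \Rightarrow \textup{(ii)}$ was established by Kassel--Potrie in \cite{KP}.'' Your proposal is consistent with this attribution --- you assign $\textup{(ii)}\Rightarrow\textup{(i)}$ to \cite{BPS}, $\textup{(iii)}\Rightarrow\textup{(ii)}$ to \cite{KP}, and note that $\textup{(i)}\Rightarrow\textup{(ii)}$ and $\textup{(ii)}\Rightarrow\textup{(iii)}$ are the easier halves --- and your sketches of the hard directions are accurate summaries of what those papers do. In that sense you have simply expanded the paper's one-line citation into a reasonable roadmap; there is nothing to compare against beyond checking that your attributions match, which they do.
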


\noindent The equivalence $\textup{(i)} \Leftrightarrow \textup{(ii)}$ was established by Kapovich--Leeb--Porti \cite{KLP2} and independently by Bochi--Potrie--Sambarino \cite{BPS}\footnote{While the results from \cite{KLP2, BPS} are stated for representations into semisimple Lie groups, note that $\rho$ is $k$-Anosov if and only in $\hat{\rho}:\mathsf{\Gamma}\rightarrow \mathsf{SL}^{\pm}_d(\mathbb{K})$, $\hat{\rho}(\gamma)=|\textup{det}\rho(\gamma)|^{-1/d}\rho(\gamma)$, is k-Anosov, since \hbox{$\frac{\sigma_k}{\sigma_{k+1}}(\rho(\gamma))=\frac{\sigma_k}{\sigma_{k+1}}(\hat{\rho}(\gamma))$ for every $\gamma$.}}, while the implication $\textup{(iii)} \Rightarrow \textup{(ii)}$ was established by Kassel--Potrie in \cite{KP}. Every $k$-Anosov representation $\rho:\Gamma \rightarrow \mathsf{GL}_d(\mathbb{K})$ admits a unique pair of continuous, $\rho$-equivariant maps $$(\xi_{\rho}^{k}, \xi_{\rho}^{d-k}):\partial_{\infty}X  \rightarrow\mathsf{Gr}_{k}(\mathbb{K}^d)\times \mathsf{Gr}_{d-k}(\mathbb{K}^d)$$ called the {\em $k$-Anosov limit maps} of $\rho$. We summarize here some of their main properties:
\medskip

\noindent \textup{(i)} $\xi_{\rho}^k$ and $\xi_{\rho}^{d-k}$ are {\em compatible,} i.e. $\xi_{\rho}^k(x) \subset \xi_{\rho}^{d-k}(x)$ for every $x\in \partial_{\infty}X.$\\
\noindent \textup{(ii)} $\xi_{\rho}^k$ and $\xi_{\rho}^{d-k}$ are {\em transverse,} i.e. $\mathbb{K}^d=\xi_{\rho}^{k}(x)\oplus \xi_{\rho}^{d-k}(y)$ for every $x,y \in \partial_{\infty}X$ with $x \neq y$.\\
\noindent \textup{(iii)} $\xi_{\rho}^k$ and $\xi_{\rho}^{d-k}$ are {\em dynamics preserving}: for every $\gamma \in \Gamma$ of infinite order, $\xi_{\rho}^{k}(\gamma^{+})$ (resp. $\xi_{\rho}^{d-k}(\gamma^{+})$) is the attracting fixed point of $\rho(\gamma)$ in $\mathsf{Gr}_k(\mathbb{K}^d)$ (resp. $\mathsf{Gr}_{d-k}(\mathbb{K}^{d}))$.
\medskip

For more background on Anosov representations and their properties we also refer the reader to \cite{Canary, GGKW, GW, KLP2, Labourie}. 

\subsection{Approximating singular values by eigenvalues.} Let $\mathsf{\Gamma}$ be an abstract group. A representation $\psi:\mathsf{\Gamma} \rightarrow \mathsf{GL}_d(\mathbb{K})$ is called {\em semisimple} if $\psi$ decomposes as a direct sum of irreducible representations. In this case, the Zariski closure $\overline{\psi(\mathsf{\Gamma})}^{\textup{Zar}}$ of $\psi(\mathsf{\Gamma})$ in $\mathsf{GL}_{d}(\mathbb{K})$ is a real reductive algebraic Lie group. 

The following result from \cite{benoist-limitcone}, based on work of Abels--Margulis--Soifer \cite{AMS}, offers a connection between eigenvalues and singular values of elements in the image of a semisimple representation. 

\begin{theorem} \textup{(Abels--Margulis--Soifer \cite{AMS}, Benoist \cite{benoist-limitcone})} \label{finitesubset} Suppose that $\big\{\rho_{i}:\mathsf{\Gamma} \rightarrow \mathsf{GL}_{d_i}(\mathbb{K})\big\}_{i=1}^{q}$ is a finite collection of semisimple representations. There exists a finite subset $F\subset \mathsf{\Gamma}$ and $C>0$ with the property: for every $\delta \in \mathsf{\Gamma}$ there exists $f \in F$ such that for every $1\leq i\leq q$ we have $$\max_{1 \leq j \leq d_i}\big|\log \sigma_j (\rho_i(\delta))-\log \ell_j(\rho_i(\delta f))\big|\leq C.$$ \end{theorem}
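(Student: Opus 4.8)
The plan is to reduce everything to the single semisimple representation obtained by taking the direct sum $\rho := \bigoplus_{i=1}^q \rho_i : \mathsf{\Gamma} \to \mathsf{GL}_N(\mathbb{K})$ with $N = \sum_i d_i$, which is again semisimple, and then invoke the Abels--Margulis--Soifer theorem on constructing "$(\varepsilon,r)$-Schottky-like" or "$(r,\varepsilon)$-proximal" elements in a single stroke. The key point is that the finite set $F$ and constant $C$ can be chosen uniformly, because the statement we want for $\rho$ restricts coordinate-block by coordinate-block to the statement for each $\rho_i$: if $\max_{1\le k\le N}|\log\sigma_k(\rho(\delta)) - \log\ell_k(\rho(\delta f))|$ were controlled, one would still need to match up the singular values of $\rho(\delta)$ with those of each block $\rho_i(\delta)$, which is not automatic. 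So instead I would work directly with the reductive group $G := \overline{\rho(\mathsf{\Gamma})}^{\mathrm{Zar}}$ and its Cartan and Lyapunov projections $\mu, \lambda : G \to \overline{\mathfrak{a}}^+$, using that for each $i$ the coordinates of $\mu(\rho(\delta))$ (resp. $\lambda$) determine, via the highest weights of the sub-representations $\rho_i$ of $G \hookrightarrow \mathsf{GL}_{d_i}(\mathbb{K})$, the quantities $\log\sigma_j(\rho_i(\delta))$ and $\log\ell_j(\rho_i(\delta f))$ up to a uniform linear change of coordinates.

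The heart of the argument is the following statement, which is essentially \cite[Lemma 1.3 \& proof of Thm. 1.1]{benoist-limitcone} (building on \cite{AMS}): there is a finite subset $F \subset \mathsf{\Gamma}$ and a constant $C_0 > 0$ such that for every $\delta \in \mathsf{\Gamma}$ there is $f \in F$ with
$$\big\| \mu(\rho(\delta f)) - \lambda(\rho(\delta f)) \big\| \le C_0 \quad\text{and}\quad \big\| \mu(\rho(\delta f)) - \mu(\rho(\delta)) \big\| \le C_0.$$
The first inequality is the substance of Abels--Margulis--Soifer: one shows $\rho(\mathsf{\Gamma})$ contains finitely many elements $f$ such that right-multiplication by some $f$ makes an arbitrary $\rho(\delta)$ "loxodromic in all simple factors" with a definite gap, so that its Jordan projection is uniformly close to its Cartan projection. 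The second inequality is elementary submultiplicativity of singular values: $\|\mu(gh) - \mu(g)\| \le \|\mu(h)\| \le \max_{f\in F}\|\mu(\rho(f))\|$, using $\sigma_1(h^{-1})^{-1}\sigma_k(g) \le \sigma_k(gh) \le \sigma_1(h)\sigma_k(g)$. Combining, $\|\mu(\rho(\delta)) - \lambda(\rho(\delta f))\| \le 2C_0$ for this $f$.

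Finally I would translate this back to the per-block statement. For each $i$, the inclusion $G \hookrightarrow \mathsf{GL}_{d_i}(\mathbb{K})$ coming from $\rho_i$ is, up to the $\mathsf{K}$-parts of the Cartan decomposition, described on the level of $\overline{\mathfrak{a}}^+$ by the weights of $\rho_i$: the multiset $\{\log\sigma_j(\rho_i(g))\}_{j=1}^{d_i}$ equals the multiset $\{\chi(\mu(g)) : \chi \in \mathrm{wt}(\rho_i)\}$ sorted decreasingly, and likewise $\{\log\ell_j(\rho_i(g))\}_{j=1}^{d_i} = \{\chi(\lambda(g))\}$ sorted decreasingly. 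Since each weight $\chi$ is a linear functional on $\mathfrak{a}$ with norm bounded by a constant $L$ depending only on the $\rho_i$, we get $|\chi(\mu(\rho(\delta))) - \chi(\lambda(\rho(\delta f)))| \le 2LC_0$ for every weight, and since sorting two real multisets that are $\epsilon$-close coordinatewise keeps them $\epsilon$-close coordinatewise (a $1$-Lipschitz operation), we conclude
$$\max_{1\le j\le d_i}\big|\log\sigma_j(\rho_i(\delta)) - \log\ell_j(\rho_i(\delta f))\big| \le 2LC_0 =: C$$
for all $i$ simultaneously, with the \emph{same} $f$ since $F$ was chosen for $\rho$. The main obstacle is genuinely the Abels--Margulis--Soifer input (the first displayed inequality) — making the set of "proximalizing" multipliers $f$ finite and the proximality gap uniform over all of $\mathsf{\Gamma}$ simultaneously in every simple factor of $G$; everything after that is linear algebra on Cartan subspaces and elementary singular value inequalities. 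I would cite \cite{AMS} and \cite{benoist-limitcone} for that core step rather than reprove it.
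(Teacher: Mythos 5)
Your plan is correct and follows essentially the same route as the proof the paper relies on: the paper does not prove Theorem \ref{finitesubset} itself but defers to \cite[Thm.~4.12]{GGKW}, whose argument is exactly the combination you outline --- the Abels--Margulis--Soifer finite set of proximalizing multipliers from \cite{AMS} giving $\|\mu(\rho(\delta f))-\lambda(\rho(\delta f))\|\leq C_0$, submultiplicativity of the Cartan projection to compare $\mu(\rho(\delta f))$ with $\mu(\rho(\delta))$, and the passage to each block $\rho_i$ via its restricted weights on the Cartan subspace of $G=\overline{\rho(\mathsf{\Gamma})}^{\textup{Zar}}$ (after conjugating so the Cartan decompositions are compatible, which changes singular values only by a bounded multiplicative amount and is absorbed into $C$), with the sorting step handled exactly as you say. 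No further repair is needed.
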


For a proof of the previous theorem we refer the reader to \cite[Thm. 4.12]{GGKW}. Kassel--Potrie in \cite{KP1} established the following strengthened version of Theorem \ref{finitesubset} for semisimple representations of semigroups and associated actions on hyperbolic spaces. We will use the following corollary of their main theorem.

\begin{theorem}\textup{(Kassel--Potrie \cite[Cor. 1.8]{KP1})} \label{AMS-KP}  Let $\Gamma$ be a semigroup acting by isometries on a direct product $\mathcal{M}$ of finitely many Gromov hyperbolic metric spaces, such that the action on none of the factors has a unique global fixed point at infinity. For any Euclidean space $V$ and any semisimple representation $\rho:\Gamma \rightarrow \mathsf{GL}(V)$, there exist $C > 0$ and a finite subset
$F$ of $\Gamma$ with the following property: for every $\gamma \in \Gamma$ there exists $f \in F$ such that \begin{align*} \big||\gamma|_{\mathcal{M}}-|\gamma f|_{\mathcal{M},\infty} \big|&\leq C \\ \max_{1 \leq j \leq d_i}\big|\log \sigma_j (\rho_i(\gamma))-\log \ell_j(\rho_i(\gamma f))\big|&\leq C.\end{align*}\end{theorem}

As a consequence of Theorem \ref{AMS-KP} and the strong proximality properties of Anosov representations (see \cite[Thm. 1.7 (iv)]{GW}) we obtain the following proposition which we use for the proof of Theorem \ref{exponent}, Theorem \ref{semisimple-bound}, Theorem \ref{exponent2} and Theorem \ref{conjugation}. For the notion of the semisimplification of a linear representation of a group $\mathsf{\Gamma}$, which we use below, we refer the reader to \cite[\S 2]{GGKW}.

\begin{proposition} \label{12finitesubset} Let $\Gamma$ be a word hyperbolic group. Suppose that $\rho_1:\Gamma \rightarrow \mathsf{GL}_d(\mathbb{K})$ is a $\{1,2\}$-Anosov representation and $\rho_2:\Gamma \rightarrow \mathsf{GL}_{m}(\mathbb{K})$ is a semisimple representation. Then there exists a finite subset $F\subset \Gamma$ and $R>0$ with the property for every $\gamma \in \Gamma$ there exists $f \in F$: \begin{align*} \big||\gamma|_{X}-|\gamma f|_{X,\infty}\big|&\leq R,\\ \big|\log \sigma_1 (\rho_1(\gamma))-\log \ell_1(\rho_1(\gamma f))\big|&\leq R,\\ \big|\log \sigma_2 (\rho_1(\gamma))-\log \ell_2(\rho_1(\gamma f))\big|&\leq R,\\ \max_{1 \leq i \leq m} \big|\log \sigma_i (\rho_2(\gamma))-\log \ell_i (\rho_2(\gamma f))\big|&\leq R.\end{align*} \end{proposition}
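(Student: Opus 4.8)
The plan is to deduce this from Theorem \ref{AMS-KP} applied to a suitable finite collection of semisimple representations, combined with the strong proximality of $\{1,2\}$-Anosov representations. Since $\rho_1$ is $\{1,2\}$-Anosov but possibly not semisimple, I cannot apply Theorem \ref{AMS-KP} directly to $\rho_1$; instead I would pass to its semisimplification $\rho_1^{ss}:\Gamma \to \mathsf{GL}_d(\mathbb{K})$. The key point is that the Cartan projection is unchanged under semisimplification only asymptotically, but for the two \emph{top} singular values one does better: because $\rho_1$ is $1$-Anosov, the top singular value and the projective limit map are insensitive to the unipotent part of the image, and similarly for $\sigma_2$ using that $\rho_1$ is $2$-Anosov. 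More precisely, I would first record the standard fact (from \cite[Thm. 1.7(iv)]{GW}, or from the gap $\tfrac{\sigma_1}{\sigma_2},\tfrac{\sigma_2}{\sigma_3} \to \infty$) that there is a constant $R_0>0$ with $\big|\log\sigma_i(\rho_1(\gamma))-\log\sigma_i(\rho_1^{ss}(\gamma))\big|\le R_0$ for $i=1,2$ and all $\gamma\in\Gamma$, and likewise $\big|\log\ell_i(\rho_1(\gamma))-\log\ell_i(\rho_1^{ss}(\gamma))\big|\le R_0$ for $i=1,2$ — the eigenvalue moduli are literally unchanged by semisimplification, so the latter is automatic with $R_0=0$, and only the singular value comparison needs the proximality input.

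Next I would apply Theorem \ref{AMS-KP} to the finite collection consisting of $\rho_1^{ss}$ and $\rho_2$ (both semisimple): this yields a finite set $F\subset\Gamma$ and a constant $C>0$ so that for every $\gamma\in\Gamma$ there is $f\in F$ with
$$\big||\gamma|_{\Gamma}-|\gamma f|_{\Gamma,\infty}\big|\le C,\quad \max_{1\le j\le d}\big|\log\sigma_j(\rho_1^{ss}(\gamma))-\log\ell_j(\rho_1^{ss}(\gamma f))\big|\le C,\quad \max_{1\le i\le m}\big|\log\sigma_i(\rho_2(\gamma))-\log\ell_i(\rho_2(\gamma f))\big|\le C.$$
Here $|\cdot|_{\Gamma}$ is the word length from a fixed finite generating set. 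Combining this with the semisimplification estimate for $i=1,2$ gives $\big|\log\sigma_i(\rho_1(\gamma))-\log\ell_i(\rho_1(\gamma f))\big|\le C+2R_0$ for $i=1,2$ (using $\ell_i(\rho_1^{ss}(\gamma f))=\ell_i(\rho_1(\gamma f))$ together with $|\log\sigma_i(\rho_1(\gamma))-\log\sigma_i(\rho_1^{ss}(\gamma))|\le R_0$), and the estimate for $\rho_2$ is already in the desired form.

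Finally I would translate from the word-length function $|\cdot|_{\Gamma}$ (finite generating set) to the model-space length functions $|\cdot|_X$ and $|\cdot|_{X,\infty}$. Since $\Gamma$ acts properly, cocompactly and isometrically on the proper geodesic space $X$, the orbit map $\gamma\mapsto\gamma x_0$ is a quasi-isometry, so there are constants $\Lambda\ge 1,\ A\ge 0$ with $\Lambda^{-1}|\gamma|_{\Gamma}-A\le |\gamma|_X\le \Lambda|\gamma|_{\Gamma}+A$ and the same for stable lengths; however a multiplicative quasi-isometry constant is not good enough here, since the proposition demands an \emph{additive} bound $\big||\gamma|_X-|\gamma f|_{X,\infty}\big|\le R$. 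To get the additive statement I would instead apply Theorem \ref{AMS-KP} not with an abstract word metric but directly with the model-space setup: Theorem \ref{AMS-KP} as stated uses a finite generating set, but its conclusion is quasi-isometry invariant up to adjusting $C$ and $F$ only in the multiplicative direction — so the genuinely additive length comparison must come from re-running the Kassel--Potrie argument (or the Benoist / Abels--Margulis--Soifer argument of Theorem \ref{finitesubset}) with $|\cdot|_X$ in place of the word metric, which is legitimate because $(X,d_X)$ is itself a Gromov hyperbolic $\Gamma$-space and the contracting-element / ping-pong mechanism underlying those theorems only uses the geometry of a hyperbolic $\Gamma$-space, not a specific generating set. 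I therefore regard \textbf{this last point — upgrading the word-metric statement of Theorem \ref{AMS-KP} to an additive statement for the model-space length $|\cdot|_X$ — as the main obstacle}; it can be handled either by citing that Kassel--Potrie's theorem holds verbatim for the orbit length function of any geometric action on a proper geodesic hyperbolic space, or by a short separate argument noting that $\big||\gamma|_X-|\gamma|_{X,\infty}\big|$ and $\big||\gamma f|_X - |\gamma f|_{X,\infty}\big|$ differ from the corresponding word-metric quantities by uniformly bounded amounts once one conjugates $\gamma f$ into a fixed compact region of $\partial_\infty^{(3)}X$ using $\mathcal{F}$, together with Lemma \ref{Gromovproduct2} controlling $|\gamma|_X - |\gamma|_{X,\infty}$ via Gromov products. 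With that in hand, choosing $R$ to be the maximum of all the constants produced above completes the proof.
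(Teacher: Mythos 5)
Your proposal is correct and follows essentially the same route as the paper: pass to the semisimplification $\rho_1^{ss}$, compare $\sigma_1,\sigma_2$ of $\rho_1$ and $\rho_1^{ss}$ using the $\{1,2\}$-Anosov property (the paper quotes \cite[Lem. 2.10]{Ts20} applied to $\rho_1$ and $\wedge^2\rho_1$ for exactly your ``standard fact''; eigenvalue moduli are unchanged, as you note), and then apply Theorem \ref{AMS-KP} to the pair $(\rho_1^{ss},\rho_2)$. Concerning the point you single out as the main obstacle, the paper settles it by the second of your two suggestions, but in a slightly different precise form: stability of (quasi-)geodesics gives $(\gamma^{+}\cdot\gamma^{-1}x_0)_{x_0}\leq R_1(\gamma^{+}\cdot\gamma^{-1})_{e}+R_1$, so Lemma \ref{Gromovproduct2} yields $|\gamma|_X-|\gamma|_{X,\infty}\leq R_1\big(|\gamma|_{\Gamma}-|\gamma|_{\Gamma,\infty}\big)+R_2$ --- a multiplicative comparison of the two defects, not the additive ``uniformly bounded difference'' you assert, which is false in general; the multiplicative bound suffices because Theorem \ref{AMS-KP} makes $|\gamma f|_{\Gamma}-|\gamma f|_{\Gamma,\infty}$ uniformly bounded, and right multiplication by $f\in F$ changes $|\cdot|_X$ by at most $\max_{f\in F}|f|_X$, giving $\big||\gamma|_X-|\gamma f|_{X,\infty}\big|\leq R$ without any appeal to $\mathcal{F}\subset\partial_{\infty}^{(3)}X$ or to a model-space re-run of the Kassel--Potrie argument.
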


\begin{proof} Let $\rho_1^{ss}:\Gamma \rightarrow \mathsf{GL}_d(\mathbb{K})$ be a semisimplification of $\rho_1$. Since $\rho_1$ is $\{1,2\}$-Anosov, by applying \cite[Lem. 2.10]{Ts20} for $\rho_1$ and $\wedge^2 \rho_1$, there exists $D>0$ such that for every $\gamma \in \Gamma$ we have $$\max_{i=1,2}\big|\log \sigma_{i}(\rho_1(\gamma))-\log \sigma_i (\rho_1^{ss}(\gamma))\big|\leq D.$$ Now note that since $\Gamma$ acts cocompactly on $(X,d_X)$, by the stability of geodesics in Gromov hyperbolic spaces \cite[Ch. III.H]{BH}, there exists $R_1>0$ such that for every $\gamma \in \Gamma$ we have: $$(\gamma^{+}\cdot \gamma^{-1}x_0)_{x_0}\leq R_1(\gamma^{+}\cdot \gamma^{-1})_{e}+R_1.$$ By Lemma \ref{Gromovproduct2} we conclude that there exist $R_2>1$ such that for every $\gamma \in \Gamma$: $$|\gamma|_{X}-|\gamma|_{X,\infty} \leq R_1\big(|\gamma|_{\Gamma}-|\gamma|_{\Gamma,\infty}\big)+R_2.$$The conclusion follows by applying Theorem \ref{AMS-KP} for the $\Gamma$-model space $(X,d)$ and the semisimple representation \hbox{$\rho_{1}^{ss}\times \rho_2:\Gamma \rightarrow \mathsf{GL}(\mathbb{K}^d\oplus \mathbb{K}^m)$.}\end{proof}

\section{Some Lemmata}\label{lemmata} 
In this section, we provide some lemmata for the distance between points in the limit set of an $1$-Anosov representation. We recall that equip the projective space $\mathbb{P}(\mathbb{K}^d)$ with the metric $d_{\mathbb{P}}$ defined by the formula $$d_{\mathbb{P}}\big([v_1],[v_2]\big)=\cos^{-1}\Bigg(\frac{|\langle v_1,v_2\rangle|}{||v_1||\cdot||v_2||}\Bigg), \ \ v_1,v_2\in \mathbb{K}^d\smallsetminus \{{\bf 0}\}.$$ \par Let $(X,d_X)$ be a $\Gamma$-model space and $\mathcal{F}\subset \partial_{\infty}^{(3)}X$ a compact subset with $\partial^{(3)}_{\infty}X=\bigcup_{\gamma \in \mathcal{F}}\gamma \mathcal{F}$. Recall from \cite{PSW} that a representation $\rho:\Gamma \rightarrow \mathsf{GL}_{d}(\mathbb{K})$ is called {\em $(1,1,2)$-hyperconvex} if $\rho$ is $\{1,2\}$-Anosov and for every triple of distinct points $(x_1,x_2,x_3) \in \partial_{\infty}^{(3)}X$ we have $$\mathbb{K}^d=\xi_{\rho}^{1}(x_1) \oplus \xi_{\rho}^{1}(x_2)\oplus \xi_{\rho}^{d-2}(x_3).$$ 

Estimates (i) and (ii) in the following lemma are a consequence of \cite[Lem. 5.3]{PSW} and \cite[Prop. 5.7]{PSW} respectively.

\begin{lemma} \label{mainlemma} Let $\Gamma$ be a non-elementary word hyperbolic group, $w_0\in \partial_{\infty}X$ and  $z \in \partial_{\infty}X \smallsetminus \{w_0\}$. There exists $\varepsilon>0$, depending only on $z,w_0\in \partial_{\infty}X$ and $\Gamma$, with the property: if $x,y\in B_{\varepsilon}(w_0)$ are distinct and $(x,y,z)=\gamma(b_1,b_2,b_3)$ for some $(b_1,b_2,b_3)\in \mathcal{F}$ and $\gamma \in \Gamma$, then:
\medskip

\noindent \textup{(i)} for every $1$-Anosov representation $\rho:\Gamma \rightarrow \mathsf{GL}_{d}(\mathbb{K})$ there exists $C_{\rho}>0$, depending only on $\rho$, such that $$d_{\mathbb{P}}\big( \xi_{\rho}^{1}(x),\xi_{\rho}^{1}(y)\big) \leq C_{\rho}\frac{\sigma_2(\rho(\gamma))}{\sigma_1(\rho(\gamma))}.$$

\noindent \textup{(ii)} for every $(1,1,2)$-hyperconvex representation $\rho:\Gamma \rightarrow \mathsf{GL}_{d}(\mathbb{K})$ there exists $c_{\rho}>0$, depending only on $\rho$, such that $$d_{\mathbb{P}}\big(\xi_{\rho}^{1}(x),\xi_{\rho}^{1}(y)\big) \geq c_{\rho}\frac{\sigma_2(\rho(\gamma))}{\sigma_1(\rho(\gamma))}.$$\end{lemma}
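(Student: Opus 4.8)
The plan is to exploit the fact that, up to the bounded error controlled by Lemma \ref{triple}, the element $\gamma$ appearing in the factorization $(x,y,z)=\gamma(b_1,b_2,b_3)$ satisfies $|\gamma|_X\approx (x\cdot y)_{x_0}$, which by \eqref{visualineq} is comparable to $-\log d_a(x,y)$; hence both estimates in Lemma \ref{mainlemma} reduce to comparing $d_{\mathbb{P}}(\xi_\rho^1(x),\xi_\rho^1(y))$ with $\frac{\sigma_2}{\sigma_1}(\rho(\gamma))$. For part (i), the strategy is to write $\rho(\gamma)=k_\gamma a_\gamma k_\gamma'$ in the Cartan decomposition $\mathsf{GL}_d(\mathbb{K})=\mathsf{K}_d\exp(\textup{diag}^+(d))\mathsf{K}_d$, so that $\xi_\rho^1(\gamma b_1)=\rho(\gamma)\xi_\rho^1(b_1)$ and $\xi_\rho^1(\gamma b_2)=\rho(\gamma)\xi_\rho^1(b_2)$. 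Since the pair $(b_1,b_3)$ ranges over a compact family of transverse pairs in $\mathcal F$ (and likewise $(b_2,b_3)$), and the image of the limit maps is compact, there is a uniform lower bound on the distance from $\xi_\rho^1(b_i)$ to the hyperplane $\xi_\rho^{d-1}(b_3)$; because $\xi_\rho^{d-1}(b_3)$ corresponds under $k_\gamma'$ to the "repelling hyperplane" $\langle e_2,\dots,e_d\rangle$ up to a bounded perturbation (this is where the second inequality of Lemma \ref{triple}, $|(b_3\cdot\gamma^{-1}x_0)_{x_0}-|\gamma|_X|\le C$, together with the contraction estimates for Anosov representations, enters), the vectors representing $\xi_\rho^1(b_1),\xi_\rho^1(b_2)$ have a component along the top singular direction $e_1$ bounded uniformly away from $0$. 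Applying $a_\gamma$ then contracts the transverse directions by at least $\frac{\sigma_2}{\sigma_1}(\rho(\gamma))$ and leaves the $e_1$-component essentially unchanged in projective terms, which after normalizing yields $d_{\mathbb{P}}(\xi_\rho^1(\gamma b_1),\xi_\rho^1(\gamma b_2))\le C_\rho\frac{\sigma_2}{\sigma_1}(\rho(\gamma))$ via the elementary estimate $d_{\mathbb{P}}([v],[w])\le \frac{\|v-w\|}{\min\{\|v\|,\|w\|\}}$ applied to suitably chosen representatives.

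For part (ii), the lower bound, the extra input is $(1,1,2)$-hyperconvexity: for the compact family of triples $(b_1,b_2,b_3)\in\mathcal F$ we have the uniform direct sum decomposition $\mathbb{K}^d=\xi_\rho^1(b_1)\oplus\xi_\rho^1(b_2)\oplus\xi_\rho^{d-2}(b_3)$, which gives a uniform lower bound on the angle between $\xi_\rho^1(b_1)$ and the plane $\xi_\rho^1(b_2)\oplus\xi_\rho^{d-2}(b_3)$ (and symmetrically). Decomposing the unit vectors $u_i$ spanning $\rho(\gamma)\xi_\rho^1(b_i)$, $i=1,2$, along the singular value filtration of $\rho(\gamma)$, hyperconvexity forces the projections of $\xi_\rho^1(b_1)$ and $\xi_\rho^1(b_2)$ to the span of the second singular direction (modulo the top line $e_1$) to be \emph{linearly independent and uniformly non-degenerate} once one accounts for the position of $\xi_\rho^{d-2}(b_3)$; after applying $a_\gamma$, which scales these two transverse components by comparable amounts $\asymp\frac{\sigma_2}{\sigma_1}(\rho(\gamma))$ relative to the $e_1$-component, the projective distance between $\rho(\gamma)\xi_\rho^1(b_1)$ and $\rho(\gamma)\xi_\rho^1(b_2)$ is \emph{bounded below} by $c_\rho\frac{\sigma_2}{\sigma_1}(\rho(\gamma))$ — this is exactly the content of \cite[Prop. 5.7]{PSW}, and the role here is just to transfer that local statement, which is phrased for sequences converging to a point, into the uniform form over $B_\varepsilon(w_0)$ using Lemma \ref{triple} to identify the relevant group element and its Cartan data.

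The main obstacle I expect is the careful control of the Cartan frames $k_\gamma,k_\gamma'$ as $\gamma$ varies over the (infinite) set of elements arising from $x,y\in B_\varepsilon(w_0)$: one needs that the attracting line $U_1(\rho(\gamma))=\rho(\gamma)\mathsf{span}(e_1)$ stays close to $\xi_\rho^1(x)$ and $\xi_\rho^1(y)$, and that the repelling hyperplane $\rho(\gamma)\mathsf{span}(e_2,\dots,e_d)$ stays close to $\xi_\rho^{d-1}(\gamma b_3)=\xi_\rho^{d-1}(z)$, both with errors that are \emph{uniform} in $\gamma$ and only depend on $\rho$, $z$, $w_0$. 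This is a standard-but-delicate consequence of the singular-value gap $\frac{\sigma_1}{\sigma_2}(\rho(\gamma))\ge e^{\varepsilon|\gamma|_X-R}\to\infty$ (Theorem \ref{Anosov}(ii)) combined with the second estimate of Lemma \ref{triple}, ensuring $\gamma^{-1}x_0$ is "far in the direction of $b_3$"; I would cite the relevant contraction lemmas from \cite{BPS} or \cite{GGKW} rather than reprove them, and the remainder is the linear-algebra bookkeeping sketched above. Given that the lemma statement explicitly attributes (i) to \cite[Lem. 5.3]{PSW} and (ii) to \cite[Prop. 5.7]{PSW}, the proof is essentially a translation argument, and the honest work is verifying that the hypotheses of those cited results hold uniformly over the relevant family, which Lemma \ref{triple} and Lemma \ref{Gromovproduct1} are precisely designed to supply.
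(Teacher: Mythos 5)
Your proposal is correct and takes essentially the same route as the paper: the paper offers no independent argument for Lemma \ref{mainlemma}, deriving (i) and (ii) directly from \cite[Lem. 5.3]{PSW} and \cite[Prop. 5.7]{PSW}, with the uniformity over $x,y\in B_{\varepsilon}(w_0)$ supplied, exactly as you say, by Lemma \ref{triple} together with the compactness of $\mathcal{F}$ and the transversality of the limit maps. Your Cartan-decomposition sketch is consistent with the linear-algebra arguments underlying the cited PSW statements, so there is nothing substantive to flag.
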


We also need the following bound for the proof of the first part of Theorem \ref{exponent-inverse2}.

\begin{lemma} \label{mainlemma2'} Let $\Gamma$ be a non-elementary word hyperbolic group, $w_0\in \partial_{\infty}X$ and fix $z \in \partial_{\infty}X \smallsetminus \{w_0\}$. There exists $\varepsilon>0$, depending only on $z,w_0\in \partial_{\infty}X$ and $\Gamma$, with the property: if $x,y\in B_{\varepsilon}(w_0)$ are distinct and $(x,y,z)=\gamma(b_1,b_2,b_3)$ for some $(b_1,b_2,b_3)\in \mathcal{F}$, then for every $1$-Anosov representation $\rho:\Gamma \rightarrow \mathsf{GL}_{d}(\mathbb{K})$ we have: $$ d_{\mathbb{P}}\big( \xi_{\rho}^{1}(x),\xi_{\rho}^{1}(y)\big) \geq \omega_{\rho} \frac{\sigma_d(\rho(\gamma))\sigma_{d-1}(\rho(\gamma))}{\sigma_1(\rho(\gamma))^2} $$ where $\omega_{\rho}:=\frac{2}{\pi}\inf \big\{d_{\mathbb{P}}(\xi_{\rho}^1(x_1),\xi_{\rho}^1(x_2)): (x_1,x_2,x_3)\in \mathcal{F} \big\}$.\end{lemma}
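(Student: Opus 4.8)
The plan is to establish the lower bound of Lemma~\ref{mainlemma2'} by reducing, via the $\Gamma$-action, to a uniform estimate over the compact set $\mathcal{F}$ and then tracking how the singular value data of $\rho(\gamma)$ distorts the Fubini--Study distance. Concretely, write $(x,y,z)=\gamma(b_1,b_2,b_3)$ with $(b_1,b_2,b_3)\in\mathcal{F}$; by $\rho$-equivariance, $\xi_\rho^1(x)=\rho(\gamma)\xi_\rho^1(b_1)$ and $\xi_\rho^1(y)=\rho(\gamma)\xi_\rho^1(b_2)$, so the problem becomes estimating $d_{\mathbb{P}}\big(\rho(\gamma)[v_1],\rho(\gamma)[v_2]\big)$ from below, where $[v_1]=\xi_\rho^1(b_1)$ and $[v_2]=\xi_\rho^1(b_2)$ are a uniformly separated pair of points coming from $\mathcal{F}$ (the separation is exactly what defines the constant $\omega_\rho$). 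The key geometric input is the elementary fact that for any $g\in\mathsf{GL}_d(\mathbb{K})$ and any two lines, the action of $g$ on $\mathbb{P}(\mathbb{K}^d)$ is bi-Lipschitz when restricted to a fixed compact region, with Lipschitz constants controlled by the singular values of $g$: more precisely, one has a lower bound of the form $d_{\mathbb{P}}(g[v_1],g[v_2])\gtrsim \frac{\sigma_d(g)}{\sigma_1(g)}\,d_{\mathbb{P}}([v_1],[v_2])$ in full generality, but this is too weak; the finer statement needed uses that $g$ can only contract a $2$-plane by the product of its two smallest singular values relative to the square of the largest.

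The main step is therefore the following linear-algebra lemma: for $g\in\mathsf{GL}_d(\mathbb{K})$ and linearly independent $v_1,v_2\in\mathbb{K}^d$,
\begin{equation*}
d_{\mathbb{P}}\big(g[v_1],g[v_2]\big)\;\geq\; c\,\frac{\sigma_{d-1}(g)\sigma_d(g)}{\sigma_1(g)^2}\,d_{\mathbb{P}}\big([v_1],[v_2]\big)
\end{equation*}
for a universal constant $c>0$. To see this I would use the Cartan decomposition $g=k_1\,\mathrm{diag}(\sigma_1,\dots,\sigma_d)\,k_2$ with $k_1,k_2\in\mathsf{K}_d$; since $\mathsf{K}_d$ acts by isometries on $(\mathbb{P}(\mathbb{K}^d),d_{\mathbb{P}})$, it suffices to treat the diagonal case $a=\mathrm{diag}(\sigma_1,\dots,\sigma_d)$. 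There $\sin d_{\mathbb{P}}(a[v_1],a[v_2])$ equals the norm of $av_1\wedge av_2$ divided by $\|av_1\|\,\|av_2\|$, i.e. $\frac{\|(\wedge^2 a)(v_1\wedge v_2)\|}{\|av_1\|\,\|av_2\|}$. The numerator is at least $\sigma_{d-1}(g)\sigma_d(g)\,\|v_1\wedge v_2\|$ (the smallest singular value of $\wedge^2 a$), while each factor in the denominator is at most $\sigma_1(g)$ times the corresponding norm; comparing with $\sin d_{\mathbb{P}}([v_1],[v_2])=\frac{\|v_1\wedge v_2\|}{\|v_1\|\,\|v_2\|}$ and using $\frac{2}{\pi}t\leq\sin t\leq t$ on $[0,\pi/2]$ to pass between $d_{\mathbb{P}}$ and $\sin d_{\mathbb{P}}$ yields the claimed inequality, with the factor $\frac{2}{\pi}$ accounting for the constant in $\omega_\rho$.

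Finally I would assemble the pieces: applying the displayed linear-algebra bound with $g=\rho(\gamma)$, $v_1\in\xi_\rho^1(b_1)$, $v_2\in\xi_\rho^1(b_2)$ gives
\begin{equation*}
d_{\mathbb{P}}\big(\xi_\rho^1(x),\xi_\rho^1(y)\big)\;\geq\;\frac{2}{\pi}\,\frac{\sigma_{d-1}(\rho(\gamma))\sigma_d(\rho(\gamma))}{\sigma_1(\rho(\gamma))^2}\;d_{\mathbb{P}}\big(\xi_\rho^1(b_1),\xi_\rho^1(b_2)\big)\;\geq\;\omega_\rho\,\frac{\sigma_{d-1}(\rho(\gamma))\sigma_d(\rho(\gamma))}{\sigma_1(\rho(\gamma))^2},
\end{equation*}
since $(b_1,b_2,b_3)$ ranges in the compact set $\mathcal{F}$ on which $\xi_\rho^1$ is continuous and injective, so the pairwise distances $d_{\mathbb{P}}(\xi_\rho^1(b_1),\xi_\rho^1(b_2))$ are bounded below by $\frac{\pi}{2}\omega_\rho$ by definition of $\omega_\rho$; the role of the parameter $\varepsilon>0$ and the ball $B_\varepsilon(w_0)$ is only to guarantee (via the cocompactness of the $\Gamma$-action on $\partial_\infty^{(3)}X$ and Lemma~\ref{triple}) that such a $\gamma$ and such a triple $(b_1,b_2,b_3)\in\mathcal{F}$ genuinely exist, which is already built into the hypothesis. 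The one point requiring a little care is the passage from $\sin d_{\mathbb{P}}$ back to $d_{\mathbb{P}}$ when the image points $\rho(\gamma)[v_i]$ could be nearly antipodal, i.e. $d_{\mathbb{P}}$ close to $\pi/2$; but in that regime the lower bound is trivially satisfied since the right-hand side is always at most a constant, so this causes no real obstacle. The genuinely delicate estimate is the lower bound $\|(\wedge^2 a)(v_1\wedge v_2)\|\geq\sigma_{d-1}(g)\sigma_d(g)\|v_1\wedge v_2\|$ together with verifying that no cancellation in the denominator can spoil it — that is the crux of the argument.
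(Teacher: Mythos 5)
Your proposal is correct and follows essentially the same route as the paper: reduce by equivariance and the Cartan decomposition to a pointwise estimate $d_{\mathbb{P}}(g[v_1],g[v_2])\geq \tfrac{2}{\pi}\tfrac{\sigma_{d-1}(g)\sigma_d(g)}{\sigma_1(g)^2}\,d_{\mathbb{P}}([v_1],[v_2])$, proved by comparing $\sin d_{\mathbb{P}}$ before and after applying $g$ and using $\tfrac{2}{\pi}t\leq \sin t\leq t$ on $[0,\pi/2]$. The paper's expansion $\sum_{i<j}\sigma_i^2\sigma_j^2|\kappa_{1i}\kappa_{2j}-\kappa_{1j}\kappa_{2i}|^2$ is exactly your bound $\|(\wedge^2 a)(v_1\wedge v_2)\|\geq \sigma_{d-1}\sigma_d\|v_1\wedge v_2\|$ written in coordinates, so the two arguments coincide.
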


\begin{proof} It suffices to prove  for every pair of unit length vectors $v_1,v_2\in \mathbb{K}^d$ the following estimate \begin{equation}\label{lower-est0}d_{\mathbb{P}}\big([gv_1],[gv_2]\big) \geq \frac{2}{\pi} \frac{\sigma_d(g)\sigma_{d-1}(g)}{\sigma_1(g)^2}d_{\mathbb{P}}\big([v_1],[v_2]\big).\end{equation}  We write $$g=k_g \exp(\mu(g))k_{g}' \ \ k_g,k_g' \in \mathsf{K}_d$$ in the Cartan decomposition of $\mathsf{GL}_{d}(\mathbb{K})$ and set $\kappa_{ji}:= \langle k_{g}'v_i,e_j\rangle$ for $1\leq j\leq 2$ and $1 \leq i \leq d.$ By the definition of the metric $d_{\mathbb{P}}$ and the inequality $\frac{2}{\pi}\theta \leq \sin \theta\leq \theta$ for $\theta\in [0,\frac{\pi}{2}]$, we have the double estimate: $$\sqrt{1-|\langle v_1,v_2\rangle|^2}\leq d_{\mathbb{P}}([v_1],[v_2])\leq \frac{\pi}{2}\sqrt{1-|\langle v_1,v_2\rangle|^2}.$$ Then, by using the previous estimate, we obtain the lower bounds \begin{align*} 1-|\langle v_1,v_2\rangle|^2&=1-|\langle k_g'v_1,k_g'v_2\rangle|^2=\sum_{1\leq i<j\leq d} \big|\kappa_{1i}\kappa_{2j}-\kappa_{1j}\kappa_{2i}\big|^2,\end{align*} \begin{align*} d_{\mathbb{P}}\big([gk_1e_1],[gk_2e_1]\big)^2&= d_{\mathbb{P}}\big([\exp(\mu(g))k_g'v_1],[\exp(\mu(g))k_g'v_2]\big)^2\\ &\geq 1-\frac{|\langle \exp(\mu(g))k_g'v_1,\exp(\mu(g))k_g'v_2\rangle |^2}{| |\exp(\mu(g))k_g'v_1 | |^2 | |\exp(\mu(g))k_g'v_2 | |^2}\\ &= \frac{\sum_{1\leq i<j\leq d} \sigma_i(g)^2 \sigma_j(g)^2 \big|\kappa_{1i}\kappa_{2j}-\kappa_{1j}\kappa_{2i}\big|^2}{\big(\sum_{i=1}^{d} \sigma_i(g)^2 |\kappa_{1i}|^2\big) \big(\sum_{i=1}^{d} \sigma_i(g)^2 |\kappa_{2i}|^2 \big)}\\ &\geq \frac{\sigma_{d-1}(g)^2\sigma_d(g)^2}{\sigma_1(g)^4}\big(1-|\langle v_1,v_2\rangle|^2\big)\\ & \geq \frac{4}{\pi^2}\frac{\sigma_{d-1}(g)^2\sigma_d(g)^2}{\sigma_1(g)^4}d_{\mathbb{P}}([v_1], [v_2])^2.  \end{align*}
This completes the proof of (\ref{lower-est0}) and the lemma follows. \end{proof}

\medskip 
\section{Optimal exponential decay constant of the ratio $\frac{\sigma_2}{\sigma_1}$}\label{pf-semisimple-bd}

The main result of this section is the proof of Theorem \ref{semisimple-bound} which we recall immediately.
\begin{theorem}\textup{(Theorem \ref{semisimple-bound})} Let $\rho:\Gamma \rightarrow \mathsf{GL}_d(\mathbb{K})$ be a $1$-Anosov representation. There exists $C>1$ and $m\in \mathbb{Z}$, $0\leq m \leq \max\big\{0,\frac{d(d-1)}{2}-2\big \}$, with the property that for every $\gamma \in \Gamma$ we have: $$C^{-1}e^{-\beta_{\rho}|\gamma|_X}\leq \frac{\sigma_2(\rho(\gamma))}{\sigma_1(\rho(\gamma))} \leq Ce^{-\alpha_{\rho} |\gamma|_X} |\gamma|_X^{m}$$ where $\alpha_{\rho}:= \underset{\gamma \in \Gamma_{\infty}}{\inf} \frac{\log \frac{\ell_1}{\ell_2} (\rho(\gamma))}{|\gamma|_{X,\infty}}$ and $\beta_{\rho}:= \underset{\gamma \in \Gamma_{\infty}}{\sup}\frac{\log \frac{\ell_1}{\ell_2} (\rho(\gamma))}{|\gamma|_{X,\infty}}$.\end{theorem}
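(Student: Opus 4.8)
The plan is to convert the statement about $\frac{\sigma_2}{\sigma_1}(\rho(\gamma))$ into a statement about $\frac{\sigma_1}{\sigma_2}$, i.e. about the singular value gap controlling $1$-Anosovness, and to transfer between singular values at the identity and eigenvalues (Lyapunov data) using Proposition \ref{12finitesubset}. The lower bound $C^{-1}e^{-\beta_\rho|\gamma|_X}\le \frac{\sigma_2}{\sigma_1}(\rho(\gamma))$ is the easy half: by Proposition \ref{12finitesubset} applied with $\rho_1=\rho$ (which is $1$-Anosov, hence trivially $\{1,2\}$-Anosov is needed — here I only need $\{1\}$-Anosov, so I should instead invoke the $\{1\}$-version, or note that $\rho\oplus\wedge^2\rho$ being Anosov gives me what I want), for each $\gamma$ there is $f\in F$ with $|\gamma|_X$ close to $|\gamma f|_{X,\infty}$ and $\log\sigma_i(\rho(\gamma))$ close to $\log\ell_i(\rho(\gamma f))$ for $i=1,2$; then $\log\frac{\sigma_2}{\sigma_1}(\rho(\gamma))\ge \log\frac{\ell_2}{\ell_1}(\rho(\gamma f)) - O(1) \ge -\beta_\rho|\gamma f|_{X,\infty}-O(1)\ge -\beta_\rho|\gamma|_X - O(1)$, using the definition of $\beta_\rho$ as a supremum. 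One must check $\beta_\rho<\infty$, which follows because $\frac{\ell_1}{\ell_2}(\rho(\gamma))\le \frac{\sigma_1}{\sigma_2}(\rho(\gamma))\le \prod\sigma_i$-type bounds give at worst linear growth in $|\gamma|_X$.

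The upper bound is the substantive part and the polynomial factor $|\gamma|_X^m$ is the crux. The idea is to pass to the second exterior power: $\frac{\sigma_2}{\sigma_1}(\rho(\gamma)) = \frac{\sigma_1(\wedge^2\rho(\gamma))}{\sigma_1(\rho(\gamma))^2}$ (since $\sigma_1(\wedge^2 w)=\sigma_1(w)\sigma_2(w)$), so I want an upper bound on $\sigma_1(\wedge^2\rho(\gamma))$ relative to $\sigma_1(\rho(\gamma))^2$. Let $\Lambda=\wedge^2\rho$ and let $\Lambda^{ss}$ be a semisimplification with $\ell$ irreducible components. For a semisimple representation that is $1$-proximal (which $\Lambda^{ss}$ is, since $\rho$ is $\{1\}$-Anosov forces $\sigma_1(\Lambda)/\sigma_2(\Lambda)\to\infty$, hence the top component is proximal), the top singular value is comparable to the top eigenvalue modulus up to the finite-set shift of Theorem \ref{AMS-KP}. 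The point is that a non-semisimple representation differs from its semisimplification by a nilpotent/unipotent-type perturbation, and the growth of $\sigma_1$ of the full (non-semisimple) representation exceeds that of $\sigma_1$ of $\Lambda^{ss}$ by at most a polynomial factor whose degree is controlled by the number of Jordan-type blocks, i.e. by $\ell-1$. Concretely, writing $\Lambda(\gamma)$ in block upper-triangular form with diagonal blocks $\Lambda_1(\gamma),\dots,\Lambda_\ell(\gamma)$ (the components of $\Lambda^{ss}$), one estimates $\|\Lambda(\gamma)\|\le C\max_j\|\Lambda_j(\gamma)\| \cdot (\text{polynomial in the number of blocks and in }|\gamma|_X)$ — this is where one uses that each off-diagonal entry is a sum of at most $|\gamma|_X$-many products along a word of length $|\gamma|_X$, and a careful accounting gives degree $\le \ell-1\le \frac{d(d-1)}{2}-1$; a slightly sharper bookkeeping (treating $\Lambda$ vs. $\rho$ and using that $\sigma_1(\rho)$ itself dominates the top component) yields the stated range $0\le m\le\max\{0,\tfrac{d(d-1)}{2}-2\}$ and $m=\ell-1$ for $d\ge 3$.

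Assembling: $\sigma_1(\wedge^2\rho(\gamma)) \le C|\gamma|_X^{m}\,\ell_1(\wedge^2\rho(\gamma f))$ for suitable $f\in F$ (combining the polynomial block estimate with Theorem \ref{AMS-KP} for $\Lambda^{ss}$), while $\sigma_1(\rho(\gamma))^2\ge C^{-1}\ell_1(\rho(\gamma f))^2$; since $\ell_1(\wedge^2\rho(\gamma f))=\ell_1(\rho(\gamma f))\ell_2(\rho(\gamma f))$ (as $\rho(\gamma f)$ is proximal, its top two eigenvalue-moduli give the top eigenvalue of the exterior square), we get $\frac{\sigma_2}{\sigma_1}(\rho(\gamma))\le C|\gamma|_X^{m}\frac{\ell_2}{\ell_1}(\rho(\gamma f))\le C|\gamma|_X^m e^{-\alpha_\rho|\gamma f|_{X,\infty}}\le C'|\gamma|_X^m e^{-\alpha_\rho|\gamma|_X}$, again using $\big||\gamma|_X-|\gamma f|_{X,\infty}\big|\le R$ and the definition of $\alpha_\rho$ as an infimum (with $\alpha_\rho>0$ guaranteed by Theorem \ref{Anosov}(iii)). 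The main obstacle is making the polynomial-growth block estimate precise with the sharp exponent; I expect to isolate it as a standalone linear-algebra lemma about norms of products in block-triangular groups, proved by induction on the number of blocks.
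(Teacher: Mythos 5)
Your overall architecture is the same as the paper's (pass to $\wedge^2\rho$, write it block upper-triangularly over the irreducible pieces of a semisimplification, induct on the number of blocks picking up one polynomial degree per block, and transfer singular values to eigenvalues by an Abels--Margulis--Soifer finite set), but the step you propose to isolate as ``a standalone linear-algebra lemma about norms of products in block-triangular groups'' is false, and this is exactly where the real work lies. There is no purely combinatorial estimate of the form $\|\Lambda(\gamma)\|\leq C\,\max_j\|\Lambda_j(\gamma)\|\cdot\mathrm{poly}(|\gamma|)$ for a block-triangular representation $\Lambda$ with diagonal blocks $\Lambda_j$: take $\Gamma=F_2=\langle a,b\rangle$ and the reducible representation with $\Lambda(a)=\mathrm{diag}(2,\tfrac12)$ and $\Lambda(b)$ unipotent upper triangular; then $\Lambda(a^nba^{-n})$ has off-diagonal entry $4^n$ while both diagonal blocks of $\Lambda(a^nba^{-n})$ are trivial, so the norm exceeds any polynomial in the word length $2n+1$ times the maximal block norm. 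The paper's induction (\S\ref{pf-semisimple-bd}) closes only because it injects two dynamical inputs at every level: (i) near-multiplicativity of $\sigma_1(\rho^{ss})$ along prefix--suffix factorizations $\gamma=\mathsf{U}_{j\gamma}\cdot(\mathsf{U}_{j\gamma}^{-1}\gamma)$, i.e.\ Lemma \ref{gen-set-1}, which rests on \cite[Prop.\ 1.12]{Ts20} and is a genuine consequence of $1$-Anosovness, and (ii) the uniform exponential gap $\frac{\sigma_2}{\sigma_1}(\rho^{ss}(\delta))\leq C e^{-\alpha_\rho|\delta|_X}$ for \emph{all} $\delta$ (Theorem \ref{AMS-KP} applied to $\rho^{ss}$), applied to each suffix, so that the prefix and suffix decays recombine into $e^{-\alpha_\rho|\gamma|_X}$ and the sum over the $\approx|\gamma|_X$ prefix positions contributes only one factor of $|\gamma|_X$ per block. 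Accordingly, the quantity propagated in the induction is not ``poly times max block norm'' but $\sigma_1(\psi_q(\gamma))\leq D_q\,\sigma_1(\rho^{ss}(\gamma))^2 e^{-\alpha_\rho|\gamma|_X}|\gamma|_X^{q-1}$; without formulating the inductive statement this way, the bookkeeping you describe does not close, and no sharpening of constants will rescue a purely linear-algebraic version.

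There is also a gap in your ``easy half''. You assert a finite set $F$ with $|\log\sigma_i(\rho(\gamma))-\log\ell_i(\rho(\gamma f))|=O(1)$ for $i=1,2$ for $\rho$ itself; for $i=2$ this is false when $\rho$ is only $1$-Anosov and not semisimple --- the paper's own examples in \S\ref{1-Holder-proof} satisfy $\sigma_2(\rho_{s,t}(a_1^n))\geq |t|n$ while $\ell_2\equiv 1$ on all of $\rho_{s,t}(\pi_1(\Sigma_g))$, so no finite shift repairs the comparison. Your fallback ``$\rho\oplus\wedge^2\rho$ is Anosov'' is also unavailable: $1$-Anosovness of $\wedge^2\rho$ is $2$-Anosovness of $\rho$, which is not assumed (and Proposition \ref{12finitesubset} requires exactly that hypothesis). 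The correct route, which is the paper's, is to run Theorem \ref{AMS-KP} for the semisimplification $\rho^{ss}$ (where the two-sided comparison is legitimate) and then transfer to $\rho$ using only one-sided inequalities that do hold in general: $\sigma_1(\rho(\gamma))\sigma_2(\rho(\gamma))=\sigma_1(\wedge^2\rho(\gamma))\geq \sigma_1\big((\wedge^2\rho)^{ss}(\gamma)\big)\geq C^{-1}\sigma_1(\rho^{ss}(\gamma))\sigma_2(\rho^{ss}(\gamma))$ together with $\sigma_1(\rho(\gamma))\asymp\sigma_1(\rho^{ss}(\gamma))$, the latter being another place where $1$-Anosovness (via \cite[Lem.\ 2.10]{Ts20}) is used. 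With these two repairs your outline becomes the paper's proof.
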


We shall make a specific choice of a generating subset of $\Gamma$. Recall that $\Gamma$ acts properly discontinuously and cocompactly on the geodesic space $(X,d_X)$ and choose $L>0$ with the property that for every $x\in X$ we have $\textup{dist}_{X}(\Gamma x_0,x)\leq L$. Let us consider the finite subset of $\Gamma$ \begin{equation}\label{gen-set}\mathcal{R}:=\big \{\gamma \in \Gamma: |\gamma|_X \leq 2L+2 \big \}.\end{equation}

We will  need the following observation.

\begin{observation}\label{gen-set-0} There exists $M>0$, depending only on the model space $(X,d_X)$, with the property: for every $\gamma \in \Gamma$ we can write $\gamma=h_1\cdots h_{p}$, where $h_1,\ldots,h_p\in \mathcal{R}$, $|p-|\gamma|_X|\leq 1$, and for every $1\leq i \leq p$ we have \begin{align*}\big|i-|h_1\cdots h_i|_X\big|&\leq M,\\ \big||h_1\cdots h_i|_X+|(h_1\cdots h_i)^{-1}\gamma|_X-|\gamma|_X\big|&\leq M.\end{align*} \end{observation}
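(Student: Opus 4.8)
The plan is to pull a geodesic of the model space back to a ``path'' inside the orbit $\Gamma x_0$: sample the geodesic from $x_0$ to $\gamma x_0$ at integer times, replace each sample point by a nearby point of the orbit, and read off the resulting group elements.

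In detail, I would first dispose of the degenerate case $\gamma x_0=x_0$ (where $|\gamma|_X=0$) by taking $p=1$ and $h_1=\gamma\in\mathcal{R}$; otherwise set $n:=|\gamma|_X=d_X(\gamma x_0,x_0)>0$, $p:=\lceil n\rceil\geq 1$, and fix a geodesic $c\colon[0,n]\to X$ with $c(0)=x_0$ and $c(n)=\gamma x_0$. Put $y_i:=c(\min\{i,n\})$ for $0\leq i\leq p$, so that $y_0=x_0$, $y_p=\gamma x_0$, $d_X(y_{i-1},y_i)\leq 1$, and $d_X(y_i,x_0)=\min\{i,n\}$. Since every point of $X$ lies within $L$ of $\Gamma x_0$, I would choose $\gamma_i\in\Gamma$ with $d_X(\gamma_i x_0,y_i)\leq L$, taking (as is possible) $\gamma_0=e$ and $\gamma_p=\gamma$. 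Finally set $h_i:=\gamma_{i-1}^{-1}\gamma_i$ for $1\leq i\leq p$, so that $h_1\cdots h_i=\gamma_i$ for every $i$ and, in particular, $h_1\cdots h_p=\gamma$.

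Then I would verify the three assertions. Because $\Gamma$ acts by isometries, $|h_i|_X=d_X(\gamma_i x_0,\gamma_{i-1}x_0)\leq d_X(\gamma_i x_0,y_i)+d_X(y_i,y_{i-1})+d_X(y_{i-1},\gamma_{i-1}x_0)\leq 2L+1\leq 2L+2$, hence $h_i\in\mathcal{R}$. The bound $\big|p-|\gamma|_X\big|\leq 1$ is immediate from $p=\lceil n\rceil$. For the last inequality, $|h_1\cdots h_i|_X=|\gamma_i|_X=d_X(\gamma_i x_0,x_0)$, and since $\big|d_X(\gamma_i x_0,x_0)-d_X(y_i,x_0)\big|\leq d_X(\gamma_i x_0,y_i)\leq L$ and $\big|d_X(y_i,x_0)-i\big|=\big|\min\{i,n\}-i\big|\leq 1$, the triangle inequality gives $\big|\,|h_1\cdots h_i|_X-i\,\big|\leq L+1$. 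Thus the observation holds with $M:=L+1$, which depends only on the density constant $L$ of the fixed orbit $\Gamma x_0$ in $(X,d_X)$.

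The argument is elementary and I do not anticipate any genuine obstacle; the only point requiring care is the behaviour at the last sample point $y_p$ when $n\notin\mathbb{Z}$ (together with the trivial case $\gamma x_0=x_0$), which is absorbed by the truncation $\min\{i,n\}$ and the slack $+1$ in the definition of $M$. As a byproduct, since every $\gamma\in\Gamma$ is expressed above as a product of elements of $\mathcal{R}$, the argument also shows that $\mathcal{R}$ generates $\Gamma$, which justifies its use as a generating set in what follows.
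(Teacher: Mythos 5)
Your proposal is correct and follows essentially the same route as the paper: sample a geodesic from $x_0$ to $\gamma x_0$ at unit-spaced points, replace each sample by an orbit point within the density constant $L$, and take successive differences to get the letters $h_i\in\mathcal{R}$, yielding the bound $M=L+1$. Your treatment is in fact slightly more careful than the paper's (explicit choice $\gamma_0=e$, $\gamma_p=\gamma$, the truncation at the endpoint, and the degenerate case $|\gamma|_X\leq 1$), but the idea and estimates are the same.
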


\begin{proof} Let $\gamma \in \Gamma$ such that $d_X(\gamma x_0,x_0)>1$. Consider a geodesic $[x_0,\gamma x_0]\subset X$ and points $x_1,\ldots,x_p=\gamma x_0$ in $[x_0,\gamma x_0]$ such that $d_X(x_i,x_{i+1})=1$ for $0\leq i \leq p-1$ and $d_X(x_p,\gamma x_0)\leq 1$. For every $i$, choose $g_i\in \mathcal{R}$ such that $d_X(g_ix_0,x_i)\leq L$ and observe that $h_{i}:=g_i^{-1}g_{i+1}\in \mathcal{R}$ since $$d_X(g_{i}^{-1}g_{i+1}x_0,x_0)\leq d_X(x_i,x_{i+1})+d_X(g_ix_0,x_i)+d_X(g_{i+1}x_0,x_{i+1})\leq 2L+1.$$ In particular, we can write $g=h_1\cdots h_p$ such that $\big||h_1\cdots h_i|_X-i\big |=\big|i-d_{X}(g_ix_0,x_0)\big|\leq L+1$ for every $1\leq i \leq p$. Moreover, note that \begin{align*}\big| |(h_1\cdots h_i)^{-1}\gamma|_X-(|\gamma|_X-|h_1\cdots h_i|_X)\big| &= \big|d_{X}(\gamma x_0,g_ix_0)-d_{X}(\gamma x_0,x_0)+d_{X}(g_ix_0,x_0)\big|\\ & \leq  \big|d_{X}(\gamma x_0,x_i)-d_{X}(\gamma x_0,x_0)+d_{X}(x_i,x_0)\big|+L=L, \end{align*} and the observartion follows.\end{proof}

\begin{lemma}\label{gen-set-1} Let $\rho:\Gamma \rightarrow \mathsf{GL}_d(\mathbb{K})$ be a $1$-Anosov representation. There exists $\varepsilon>0$ with the property: if $\gamma \in \Gamma$ and we write $\gamma=h_1\cdots h_r$, $h_1,\ldots,h_r\in \mathcal{R}$, as in Observation \ref{gen-set-0}, then $$\min_{1\leq i \leq r-1}\frac{\sigma_1(\rho(h_1\cdots h_r))}{\sigma_1(\rho(h_1\cdots h_i))\sigma_1(\rho(h_{i+1}\cdots h_r))} \geq \varepsilon.$$\end{lemma}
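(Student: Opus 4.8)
The plan is to exploit the well-known submultiplicativity/supermultiplicativity features of the top singular value along a word that fellow-travels a geodesic, combined with the uniform gap $\frac{\sigma_1}{\sigma_2}(\rho(g)) \geq e^{\varepsilon'|g|_X - R}$ coming from Theorem \ref{Anosov}(ii) applied with $k=1$. First I would record the trivial submultiplicativity $\sigma_1(AB) \leq \sigma_1(A)\sigma_1(B)$, which shows that the ratio in question is always $\leq 1$; the content is the lower bound. The idea is that for a projective Anosov representation the limit map $\xi_\rho^1$ and the flag $\xi_\rho^{d-1}$ give, for each $\gamma$, a well-defined attracting line and repelling hyperplane for the Cartan decomposition of $\rho(\gamma)$, and these vary continuously; since the partial products $h_1\cdots h_i$ track the geodesic $[x_0,\gamma x_0]$ to within bounded distance $M$ (Observation \ref{gen-set-0}), the ``Cartan attracting directions'' $U_1^+(\rho(h_1\cdots h_i))$ stay uniformly transverse to the ``Cartan repelling hyperplanes'' $U_{d-1}^-(\rho(h_{i+1}\cdots h_r))$ as we split the word at position $i$.

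The key step is the following standard estimate (a consequence of Cartan decomposition bookkeeping, see e.g.\ \cite[\S 2]{BPS} or \cite[Lem. 4.?]{GGKW}): there is a constant depending only on a lower bound for the angle between the attracting axis of $A$ and the repelling hyperplane of $B$ such that
$$\sigma_1(AB) \geq (\text{const})\cdot \sigma_1(A)\,\sigma_1(B).$$
Quantitatively, writing $A = k_A \exp(\mu(A)) k_A'$ and $B = k_B \exp(\mu(B)) k_B'$, one has $\sigma_1(AB) \geq \sigma_1(A)\sigma_1(B)\,\big| \langle (k_A')^{-1} e_1, \overline{k_B e_1}\rangle \big|$ up to controlled errors involving the singular-value gaps $\frac{\sigma_2}{\sigma_1}(A)$ and $\frac{\sigma_2}{\sigma_1}(B)$, which are exponentially small in $|A|_X$ and $|B|_X$ by Anosov-ness. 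Thus it remains to produce a uniform lower bound, over all $\gamma\in\Gamma$ and all split points $1\leq i\leq r-1$, for the relevant inner product, i.e.\ for the transversality of the top-singular direction of $\rho(h_1\cdots h_i)$ and the dominant hyperplane of $\rho(h_{i+1}\cdots h_r)$.

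To get that uniform transversality I would argue by contradiction and compactness. Because the finite set $\mathcal{R}$ generates $\Gamma$ and the partial products fellow-travel geodesics, the relevant pairs of (flag, hyperplane) are, up to the $\Gamma$-action and bounded ambiguity, parametrized by a precompact family; any failure of uniform transversality would, after extracting a limit, produce $x\neq y$ in $\partial_\infty X$ with $\xi_\rho^1(x)\subset \xi_\rho^{d-1}(y)$, contradicting the transversality property (ii) of Anosov limit maps in \S \ref{definitionAnosov}. More concretely: if $\sigma_1(\rho(h_1\cdots h_r))/[\sigma_1(\rho(h_1\cdots h_i))\sigma_1(\rho(h_{i+1}\cdots h_r))]\to 0$ along a sequence, then since the two outer factors each have singular gap $\to\infty$ (their lengths $i$ and $r-i$ need not both go to infinity, but if one stays bounded the corresponding factor lies in a finite set and the ratio is bounded below by a positive constant for free), one would, on the infinite side(s), extract convergence of the normalized top-singular direction of $\rho(h_1\cdots h_i)$ to some $\xi_\rho^1(x^+)$ and of the dominant hyperplane of $\rho(h_{i+1}\cdots h_r)$ to some $\xi_\rho^{d-1}(y^-)$ with $x^+\neq y^-$ (the latter because the two endpoints of the respective subgeodesics stay at definite Gromov-product distance, using that $\mathcal{F}$ is compact as in Lemma \ref{triple}); transversality then forces the inner product to stay bounded away from $0$, a contradiction.

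The main obstacle I expect is making the last compactness argument fully rigorous when only one of the two lengths $i$, $r-i$ tends to infinity, and carefully tracking that the attracting/repelling data of the subwords converge to limit-map values at \emph{distinct} boundary points rather than coinciding — this is exactly where Observation \ref{gen-set-0} (bounded tracking of the geodesic) and the compactness of $\mathcal{F}$ enter to guarantee the two limiting endpoints differ. Once the uniform transversality constant $\theta_0>0$ is in hand, choosing $\varepsilon>0$ small in terms of $\theta_0$, $R$, $\varepsilon'$ and $M$ finishes the proof, with the exponentially small singular-gap corrections absorbed for all but finitely many configurations and the finitely many remaining ones handled by compactness directly.
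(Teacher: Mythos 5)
Your route is genuinely different from the paper's: the paper disposes of Lemma \ref{gen-set-1} in two lines by combining the defect bound $|h_1\cdots h_i|_X+|h_{i+1}\cdots h_r|_X-|\gamma|_X\leq 2M+1$ from Observation \ref{gen-set-0} with the citation \cite[Prop. 1.12]{Ts20}, which asserts precisely that for an $1$-Anosov $\rho$ there are $\delta\in(0,1)$, $t>0$ with $\sigma_1(\rho(w_1w_2))\geq \delta\, e^{t(|w_1w_2|_X-|w_1|_X-|w_2|_X)}\sigma_1(\rho(w_1))\sigma_1(\rho(w_2))$ for all $w_1,w_2\in\Gamma$; one then takes $\varepsilon=\delta e^{-t(2M+1)}$. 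What you propose is essentially a from-scratch proof of the bounded-defect case of that proposition via the Cartan decomposition and uniform transversality of limit maps, which is a legitimate and standard alternative. Your displayed inequality is in fact exact: $\sigma_1(AB)\geq\sigma_1(A)\sigma_1(B)\,\big|\big\langle k_Be_1,(k_A')^{-1}e_1\big\rangle\big|$ with no error terms (the singular-value gaps are only needed later, to identify Cartan attractors with limit-map values), and your treatment of the case where one of $i$, $r-i$ stays bounded is fine.

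There is, however, a concrete slip in the transversality pairing, and it propagates into the compactness step. For $A=\rho(h_1\cdots h_i)$ and $B=\rho(h_{i+1}\cdots h_r)$, the quantity in your key inequality is the angle between the Cartan attractor of the \emph{right} subword, $[k_Be_1]$, and the hyperplane $\big((k_A')^{-1}e_1\big)^{\perp}$, i.e.\ the repelling hyperplane of the \emph{left} subword (the attracting hyperplane of $\rho\big((h_1\cdots h_i)^{-1}\big)$). You instead assert uniform transversality of the attracting direction of $\rho(h_1\cdots h_i)$ with the repelling hyperplane of $\rho(h_{i+1}\cdots h_r)$, and in the contradiction argument you accordingly extract $\xi_\rho^1$ from the left subword and $\xi_\rho^{d-1}$ from the right one. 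That pairing is the one relevant for the product in the opposite order and does not control $\sigma_1(AB)$: already in $\mathsf{GL}_2(\mathbb{R})$, $A=\mathrm{diag}(\lambda,1)$ and $B=\big(\begin{smallmatrix}0&-1\\1&0\end{smallmatrix}\big)\mathrm{diag}(\mu,1)$ have the attracting line $[e_1]$ of $A$ transverse to the repelling line $[e_2]$ of $B$, yet $\sigma_1(AB)=\max(\lambda,\mu)\ll\lambda\mu$. Moreover, the hypothesis of your contradiction (ratio tending to $0$) only forces degeneration of the \emph{correct} pair, so the argument must be run with $U_1(\rho(h_{i+1}\cdots h_r))\to\xi_\rho^1\big(\lim_n h_{i+1}\cdots h_r\,x_0\big)$ and the repelling hyperplane of $\rho(h_1\cdots h_i)$ converging to $\xi_\rho^{d-1}\big(\lim_n (h_1\cdots h_i)^{-1}x_0\big)$. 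These two boundary points are distinct because $\big((h_1\cdots h_i)^{-1}x_0\cdot h_{i+1}\cdots h_r\,x_0\big)_{x_0}=\tfrac12\big(|h_1\cdots h_i|_X+|h_{i+1}\cdots h_r|_X-|\gamma|_X\big)\leq M+\tfrac12$ by Observation \ref{gen-set-0}; it is this bounded Gromov product, not the compactness of $\mathcal{F}$, that keeps the limit points apart. With the pairing corrected (and the standard fact that Cartan attractors/repellers of $1$-Anosov representations converge to the limit-map values along sequences with $\gamma_nx_0$ converging in $\partial_\infty X$), your argument closes and yields the lemma.
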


\begin{proof} Note that for every $1\leq i \leq r$, by the choice of $h_1,\ldots, h_r\in \mathcal{R}$, we have that $$|h_1\cdots h_i|_X+|h_{i+1}\cdots h_{r}|_X-|h_1\cdots h_r|_X\leq 2M+1$$ where $M>0$ is furnished by Observation \ref{gen-set-0}. Since $\rho$ is $1$-Anosov, by \cite[Prop. 1.12]{Ts20}, there exists $0<\delta<1$ and $t>0$, depending only on $\rho$, such that for every $w_1,w_2 \in \Gamma$ we have: \begin{align}\label{prod-ineq-0}\frac{\sigma_1(\rho(w_1 w_2))}{\sigma_1(\rho(w_1))\sigma_1(\rho(w_2))}&\geq  \delta \exp \big(t(|w_1 w_2|_X-|w_1|_X-|w_2|_X)\big).\end{align} The conclusion now follows by considering $\varepsilon:=\delta e^{-t(2M+1)}$ and applying (\ref{prod-ineq-0}) for $w_1:=h_1\cdots h_i$ and $w_2:=h_{i+1}\cdots h_r$.  \end{proof}

\begin{proof}[Proof of Theorem \ref{semisimple-bound} \textup{(}upper bound\textup{)}] Let $\rho^{ss}:\Gamma \rightarrow \mathsf{GL}_d(\mathbb{K})$ be a semisimplification of $\rho$. Note that since $\rho$ and $\rho^{ss}$ have the same Lyapunov projection (see \cite[Prop. 1.8]{GGKW}), $\rho^{ss}$ is also $1$-Anosov. In particular $\alpha_{\rho}=\alpha_{\rho^{ss}}$. Now let $F\subset \Gamma$ be a finite subset and $C_1>0$ satisfying the conclusion of Theorem \ref{AMS-KP}. For every $\gamma \in \Gamma$ there exists $f\in F$: $$\frac{\sigma_2(\rho^{ss}(\gamma))}{\sigma_1(\rho^{ss}(\gamma))} \leq e^{2C_1}\frac{\ell_2(\rho(\gamma f))}{\ell_1(\rho(\gamma f))}\leq e^{2C_1}e^{-\alpha_{\rho}|\gamma f|_{X, \infty}}\leq e^{(2+\alpha_{\rho})C_1}e^{-\alpha_{\rho} |\gamma|_X}.$$ Therefore, for $C_2:=e^{2+\alpha_{\rho}}C_1$, we conclude that \begin{equation}\label{12-semisimple} \frac{\sigma_2(\rho^{ss}(\gamma))}{\sigma_1(\rho^{ss}(\gamma))} \leq C_2e^{-\alpha_{\rho} |\gamma|_X} \ \ \forall \ \gamma \in \Gamma.\end{equation} 

 Let $(\wedge^2 \rho)^{ss}:\Gamma \rightarrow \mathsf{GL}(\wedge^2\mathbb{K}^d)$ be a semisimplification of the exterior power $\wedge^2 \rho$. Up to conjugation by an element of $\mathsf{GL}(\wedge^2 \mathbb{K}^d)$, we may assume that there exists a decomposition $\wedge^2 \mathbb{K}^d=V_1 \oplus \cdot \cdot \cdot \oplus V_{\ell}$, $1\leq \ell\leq \frac{d(d-1)}{2}-1$, such that $$\wedge^2 \rho=\begin{pmatrix}
\rho_1 & \cdots & \ast \\ 
  & \ddots  & \vdots \\ 
 &  & \rho_{\ell}
\end{pmatrix},\  (\wedge^2 \rho)^{ss}=\begin{pmatrix}
\rho_1 &   & \\ 
  & \ddots  &  \\ 
 &  & \rho_{\ell}
\end{pmatrix},$$ where $\{\rho_i:\Gamma \rightarrow \mathsf{GL}(V_i)\}_{i=1}^{\ell}$ are irreducible. Note that since $$\ell_1\big((\wedge^2 \rho)^{ss}(h)\big)=\ell_1(\rho^{ss}(h))\ell_2(\rho^{ss}(h)), \ \ \forall \ h\in \Gamma$$ by Theorem \ref{finitesubset} there exists $C_3>1$ such that \begin{equation} \label{double-bound-semisimple} C_3^{-1}\leq \frac{\sigma_1((\wedge^2\rho)^{ss}(h))}{\sigma_1(\rho^{ss}(h)) \sigma_2(\rho^{ss}(h))}\leq C_3 \ \ \forall \ h\in \Gamma.\end{equation} 

For $1\leq i \leq \ell$ and $\gamma \in \Gamma$, let $u_i(\gamma)$ be the $(i+1)$-th column block in the matrix decomposition of $\wedge^2 \rho$ excluding the block $\rho_{i+1}(\gamma)$. We define the representations $\psi_i:\Gamma \rightarrow \mathsf{GL}_{m_i}(\mathbb{K})$, $m_i=\sum_{j=1}^{i}\textup{dim}V_{j}$, $i=1,\ldots, \ell-1$, recursively as follows: \begin{equation}\label{decompose-1} \psi_1(\gamma)=\rho_1(\gamma), \ \psi_{i+1}(\gamma)=\begin{pmatrix}[0.8]
\psi_i(\gamma) & u_{i}(\gamma)  \\ 
  & \rho_{i+1}(\gamma)  \\ 
\end{pmatrix},\ \psi_{\ell}(\gamma)=\wedge^2\rho(\gamma)  \ \ \gamma \in \Gamma.\end{equation} 

Observe that  (\ref{12-semisimple}) and (\ref{double-bound-semisimple}) imply that \begin{align}\label{irr-ineq}\max_{1\leq i\leq \ell}\sigma_1(\rho_{i}(\gamma))\leq \sigma_1\big((\wedge^2\rho)^{ss}(\gamma)\big)\leq C_2C_3 \sigma_1(\rho^{ss}(\gamma))^2e^{-\alpha_{\rho}|\gamma|_X} \ \ \forall \ \gamma \in \Gamma.\end{align}

By using induction, we will prove that for every $1\leq q\leq \ell$, there exists $D_q>1$: \begin{equation}\label{ineq-semi-1}\sigma_1(\psi_q(\gamma))\leq D_q \sigma_1(\rho^{ss}(\gamma))^2e^{-a_{\rho}|\gamma|_X}|\gamma|_{X}^{q-1} \ \ \forall \ \gamma \in \Gamma.\end{equation} Recall the definition of the finite generating set $\mathcal{R}\subset \Gamma$ from (\ref{gen-set}) and set $$D_0:=\max_{h\in \mathcal{R}}\big(||\wedge^2\rho(h)||\cdot ||\wedge^2 \rho(h^{-1})||\big)\geq 1.$$ 

Let $\gamma \in \Gamma$ be an arbitrary element with $|\gamma|_X>1$ and write $\gamma=h_1\cdots h_{p}$ as in Observation \ref{gen-set-0} for some $p\in \mathbb{N}$ with $|p-|\gamma|_X|\leq 1$. Let us also set $$\mathsf{U}_{0\gamma}:=e,\ \mathsf{U}_{j\gamma}:=h_1\cdots h_j \ \ j=1,\ldots, p$$ and note that there is $C_4>0$, depending only on the model space $(X,d_X)$, such that \begin{equation} \label{gen-set-eq3}\max_{1\leq j \leq p}\big(\big||\mathsf{U}_{j\gamma}|_X+|\mathsf{U}_{j\gamma}^{-1}\gamma |_X-|\gamma|_X \big|\big)\leq C_4.\end{equation} 

We first note that for $q=1$, (\ref{ineq-semi-1}) follows immediately by (\ref{irr-ineq}). Now suppose that (\ref{ineq-semi-1}) holds true for $\psi_q$, $1\leq q\leq \ell-1$, and we will prove it for $\psi_{q+1}$. Observe that we have the equality \begin{equation} \label{gen-set-eq2} u_q(\gamma)=u_q(h_1 \cdot \cdot \cdot h_{p})=\sum_{j=0}^{p-1} \psi_q(h_{0} \cdot \cdot \cdot h_{j}) u_q(h_{j+1}) \rho_{q+1}(h_{j+2}\cdot \cdot \cdot h_{p+1})\end{equation} where $h_0=h_{p+1}:=e$. Since $\rho^{ss}$ is $1$-Anosov, by Lemma \ref{gen-set-1} there exists $\varepsilon>0$, depending only on $\rho$, such that \begin{equation} \label{gen-set-eq1}\min_{1\leq j\leq p}\frac{\sigma_1(\rho^{ss}(\gamma))}{\sigma_1(\rho^{ss}(\mathsf{U}_{j\gamma}))\sigma_1(\rho^{ss}(\mathsf{U}_{j\gamma}^{-1}\gamma))}\geq \varepsilon.\end{equation}

Then, by using (\ref{gen-set-eq3}), (\ref{gen-set-eq2}) and (\ref{gen-set-eq1}), we successively obtain the following bounds: \begin{align*}\frac{\big|\big|u_q(\gamma)\big|\big|}{\sigma_1(\rho^{ss}(\gamma))^2} & \leqslant D_0^2 \sum_{j=0}^{p-1} \frac{1}{\sigma_1(\rho^{ss}(\gamma))^2} \sigma_1\big(\psi_{q}(\mathsf{U}_{j\gamma})\big) \sigma_1\big(\rho_{q+1}(\mathsf{U}_{j\gamma}^{-1}\gamma)\big)\\ \end{align*} \begin{align*}  \ \ \ \ \ \ \ \ \ \ \ \ \ \ \ \ \ \ \   & \leqslant D_0^2 D_{q}\sum_{j=0}^{p-1} \frac{|\mathsf{U}_{j\gamma}|_X^{q-1}}{\sigma_1(\rho^{ss}(\gamma))^2} e^{-\alpha_{\rho}|\mathsf{U}_{j\gamma}|_X} \sigma_1\big(\rho^{ss}(\mathsf{U}_{j\gamma})\big)^2  \sigma_1\big((\wedge^2 \rho)^{ss}(\mathsf{U}_{j\gamma}^{-1}\gamma)\big)\\ &\leq D_0^2 D_qC_3 \sum_{j=0}^{p-1}|\mathsf{U}_{j\gamma}|_{X}^{q-1} e^{-\alpha_{\rho}|\mathsf{U}_{j\gamma}|_X} \frac{\sigma_1(\rho^{ss}(\mathsf{U}_{j\gamma}))^2  \sigma_1(\rho^{ss}(\mathsf{U}_{j\gamma}^{-1}\gamma))^2}{\sigma_1(\rho^{ss}(\gamma))^2}\frac{\sigma_2(\rho^{ss}(\mathsf{U}_{j\gamma}^{-1}\gamma))}{\sigma_1(\rho^{ss}(\mathsf{U}_{j\gamma}^{-1}\gamma))}\\ & \leq D_0^2 D_qC_3\varepsilon^{-2} \sum_{j=0}^{p-1}|\mathsf{U}_{j\gamma}|_{X}^{q-1} e^{-\alpha_{\rho}|\mathsf{U}_{j\gamma}|_X} \frac{\sigma_2(\rho^{ss}(\mathsf{U}_{j\gamma}^{-1}\gamma))}{\sigma_1(\rho^{ss}(\mathsf{U}_{j\gamma}^{-1}\gamma))}\\ & \leq D_0^2 D_q C_2C_3\varepsilon^{-2} \sum_{j=0}^{p-1}|\mathsf{U}_{j\gamma}|_{X}^{q-1} e^{-\alpha_{\rho}(|\mathsf{U}_{j\gamma}|_X+|\mathsf{U}_{j\gamma}^{-1}\gamma|_X)}\\ &\leq D_0^2 D_q C_2C_3\varepsilon^{-2} \sum_{j=0}^{p-1}|\mathsf{U}_{j\gamma}|_{X}^{q-1} e^{-\alpha_{\rho}(|\gamma|_X-C_4)} \\  &\leq D_0^2 D_q C_2 e^{C_4 \alpha_{\rho}}\varepsilon^{-2} p^{q} e^{-\alpha_{\rho}|\gamma|_X} \\  &\leq D_0^2 D_q C_2 C_3e^{C_4 \alpha_{\rho}}\varepsilon^{-2} 2^{q}|\gamma|_X^{q} e^{-\alpha_{\rho}|\gamma|_X}. \end{align*}

As a consequence of the the previous estimate and  (\ref{irr-ineq}), we obtain a constant $D_{q+1}>0$, dependng only on $\rho$, such that for every $\gamma \in \Gamma$, \begin{align*}\sigma_1(\psi_{q+1}(\gamma))&\leq \sigma_1(\rho_{q+1}(\gamma))+\sigma_1(\psi_{q}(\gamma))+\big|\big|u_q(\gamma)\big|\big|\\ &\leq  D_{q+1}\sigma_1(\rho^{ss}(\gamma))^2e^{-\alpha_{\rho}|\gamma|_X} |\gamma|_X^{q}.\end{align*} The induction is complete and (\ref{ineq-semi-1}) follows.

Now we finish the proof of the upper bound. \hbox{Since $\rho$ is $1$-Anosov, we may choose $C_5>1$ such that} \begin{align}\label{ratio-semisimple}C_5^{-1}\leq \frac{\sigma_1(\rho(\gamma))}{\sigma_1(\rho^{ss}(\gamma))}\leq C_5 \ \ \forall \ \gamma \in \Gamma.\end{align} In particular, since $\psi_{\ell}=\wedge^2\rho$ and $1\leq \ell\leq \frac{d(d-1)}{2}-1$, we conclude for every $\gamma \in \Gamma$:\begin{align*}\frac{\sigma_2(\rho(\gamma))}{\sigma_1(\rho(\gamma))}=\frac{\sigma_1(\psi_{\ell}(\gamma))}{\sigma_1(\rho(\gamma))^2}&\leq \frac{\sigma_1(\rho^{ss}(\gamma))^2}{\sigma_1(\rho(\gamma))^2} D_{\ell}e^{-\alpha_{\rho}|\gamma|_X}|\gamma|_X^{\ell-1}\\ &\leq C_5^2D_{\ell}e^{-\alpha_{\rho}|\gamma|_X}|\gamma|_X^{\ell-1}.\end{align*} This last estimate concludes the proof of the upper bound.\end{proof}

\begin{proof}[Proof of Theorem \ref{semisimple-bound} \textup{(}lower bound\textup{)}] Let us recall that $\beta_{\rho}:=\sup_{\gamma \in \Gamma_{\infty}}\frac{\log\frac{\ell_1}{\ell_2}(\rho(\gamma))}{|\gamma|_{X,\infty}}$. By applying Theorem \ref{AMS-KP} for the semisimplification $\rho^{ss}$ of $\rho$ and the definition of $\beta_{\rho}>0$, we obtain $C_6>1$ with the property: $$\frac{\sigma_2(\rho^{ss}(\gamma))}{\sigma_1(\rho^{ss}(\gamma))}\geq C_6^{-1}e^{-\beta_{\rho}|\gamma|_X} \ \ \forall \gamma\in \Gamma.$$ Therefore, by using (\ref{double-bound-semisimple}) and (\ref{ratio-semisimple}), for every $\gamma \in \Gamma$ we conclude that \begin{align*}\frac{\sigma_2(\rho(\gamma))}{\sigma_1(\rho(\gamma))}=\frac{\sigma_1(\wedge^2 \rho(\gamma))}{\sigma_1(\rho(\gamma))^2}&\geq C_{5}^{-2}\frac{\sigma_1((\wedge^2\rho)^{ss}(\gamma))}{\sigma_1(\rho^{ss}(\gamma))^2}\\ &\geq (C_5^2C_3)^{-1}\frac{\sigma_2(\rho^{ss}(\gamma))}{\sigma_1(\rho^{ss}(\gamma))}\geq (C_6 C_5^2C_3)^{-1}e^{-\beta_{\rho}|\gamma|_X}.\end{align*} This concludes the proof of the lower bound. \end{proof} 

As a corollary of Theorem \ref{semisimple-bound}, we obtain the following relations involving the Lyapunov and Cartan projection of an $1$-Anosov representation. 

\begin{corollary}\label{equality-exp} Let $\rho:\Gamma \rightarrow \mathsf{GL}_d(\mathbb{K})$ be a $1$-Anosov representation. \hbox{Then the following equalities hold:} \begin{align*} \inf_{\gamma\in \Gamma_{\infty}}\frac{\log \frac{\ell_1}{\ell_2}(\rho(\gamma))}{|\gamma|_{X,\infty}}&=\sup_{n\geq 1}\inf_{|\gamma|_X\geq n}\frac{\log \frac{\sigma_1}{\sigma_2}(\rho(\gamma))}{|\gamma|_{X}}\\  \sup_{\gamma\in \Gamma_{\infty}}\frac{\log \frac{\ell_1}{\ell_2}(\rho(\gamma))}{|\gamma|_{X,\infty}}&=\inf_{n\geq 1}\sup_{|\gamma|_X\geq n}\frac{\log \frac{\sigma_1}{\sigma_2}(\rho(\gamma))}{|\gamma|_{X}}.\end{align*}\end{corollary}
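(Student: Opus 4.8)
The plan is to deduce both identities from Theorem \ref{semisimple-bound} together with a passage to powers of infinite order elements. Rewriting the double inequality of Theorem \ref{semisimple-bound} in logarithmic form, there are $C>1$ and an integer $m$ with $0\leq m\leq\max\{0,\frac{d(d-1)}{2}-2\}$ such that for every $\gamma\in\Gamma$ with $|\gamma|_X\geq 1$ one has
\[
\alpha_{\rho}-\frac{m\log|\gamma|_X+\log C}{|\gamma|_X}\ \leq\ \frac{\log\frac{\sigma_1}{\sigma_2}(\rho(\gamma))}{|\gamma|_X}\ \leq\ \beta_{\rho}+\frac{\log C}{|\gamma|_X}.
\]
Since $m\geq 0$ and $\frac{m\log t+\log C}{t}\to 0$, $\frac{\log C}{t}\to 0$ as $t\to+\infty$, passing to $\inf_{|\gamma|_X\geq n}$ (resp. $\sup_{|\gamma|_X\geq n}$) and then to $\sup_{n\geq 1}$ (resp. $\inf_{n\geq 1}$) immediately gives $\sup_{n\geq 1}\inf_{|\gamma|_X\geq n}\frac{\log\frac{\sigma_1}{\sigma_2}(\rho(\gamma))}{|\gamma|_X}\geq\alpha_{\rho}$ and $\inf_{n\geq 1}\sup_{|\gamma|_X\geq n}\frac{\log\frac{\sigma_1}{\sigma_2}(\rho(\gamma))}{|\gamma|_X}\leq\beta_{\rho}$, which are the ``$\geq$'' half of the first equality and the ``$\leq$'' half of the second.

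For the reverse inequalities I would fix $\gamma\in\Gamma_{\infty}$ and look at the sequence $(\gamma^k)_{k\geq 1}$. Since $\rho$ is $1$-Anosov and $\gamma$ has infinite order, $\rho(\gamma)$ is proximal; applying the spectral radius formula to $\rho(\gamma)$ and to $\wedge^2\rho(\gamma)$, and using $\sigma_1(\wedge^2\rho(g))=\sigma_1(g)\sigma_2(g)$ together with $\ell_1(\wedge^2\rho(g))=\ell_1(g)\ell_2(g)$ for proximal $g$, one gets $\frac{1}{k}\log\sigma_1(\rho(\gamma^k))\to\log\ell_1(\rho(\gamma))$ and $\frac{1}{k}\log\big(\sigma_1(\rho(\gamma^k))\sigma_2(\rho(\gamma^k))\big)\to\log\big(\ell_1(\rho(\gamma))\ell_2(\rho(\gamma))\big)$, hence $\frac{1}{k}\log\frac{\sigma_1}{\sigma_2}(\rho(\gamma^k))\to\log\frac{\ell_1}{\ell_2}(\rho(\gamma))$. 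Combined with $\frac{1}{k}|\gamma^k|_X\to|\gamma|_{X,\infty}>0$ this yields
\[
\lim_{k\to\infty}\frac{\log\frac{\sigma_1}{\sigma_2}(\rho(\gamma^k))}{|\gamma^k|_X}=\frac{\log\frac{\ell_1}{\ell_2}(\rho(\gamma))}{|\gamma|_{X,\infty}},
\]
while $|\gamma^k|_X\to+\infty$. Since for each fixed $n$ the elements $\gamma^k$ satisfy $|\gamma^k|_X\geq n$ for all large $k$, this forces $\inf_{|\gamma'|_X\geq n}\frac{\log\frac{\sigma_1}{\sigma_2}(\rho(\gamma'))}{|\gamma'|_X}\leq\frac{\log\frac{\ell_1}{\ell_2}(\rho(\gamma))}{|\gamma|_{X,\infty}}$ and $\sup_{|\gamma'|_X\geq n}\frac{\log\frac{\sigma_1}{\sigma_2}(\rho(\gamma'))}{|\gamma'|_X}\geq\frac{\log\frac{\ell_1}{\ell_2}(\rho(\gamma))}{|\gamma|_{X,\infty}}$ for every $n$; taking $\sup_{n\geq1}$ resp. $\inf_{n\geq1}$ and then the infimum resp. supremum over $\gamma\in\Gamma_{\infty}$ produces the remaining halves $\sup_{n}\inf_{|\gamma|_X\geq n}(\cdots)\leq\alpha_{\rho}$ and $\inf_{n}\sup_{|\gamma|_X\geq n}(\cdots)\geq\beta_{\rho}$, completing both equalities.

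Everything here is routine manipulation of infima and suprema once Theorem \ref{semisimple-bound} is in hand; the only extra input is the elementary convergence $\sigma_i(g^k)^{1/k}\to\ell_i(g)$ applied to $g=\rho(\gamma)$ and $g=\wedge^2\rho(\gamma)$, together with the minor point that one must check $|\gamma^k|_X\to\infty$ so that the powers are admissible in $\inf_{|\gamma|_X\geq n}$ and $\sup_{|\gamma|_X\geq n}$ for every $n$. I do not expect any genuine obstacle.
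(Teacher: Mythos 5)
Your proof is correct and follows essentially the route the paper intends: Corollary \ref{equality-exp} is stated as a direct consequence of Theorem \ref{semisimple-bound}, with one inequality in each identity coming from the double bound there (the polynomial factor being negligible after dividing by $|\gamma|_X$) and the reverse inequality from evaluating along powers $\gamma^k$ of infinite-order elements, using $\sigma_1(g^k)^{1/k}\to\ell_1(g)$, $\sigma_1(\wedge^2 g^k)^{1/k}\to\ell_1(g)\ell_2(g)$ and $|\gamma^k|_X/k\to|\gamma|_{X,\infty}>0$. The only cosmetic remark is that proximality of $\rho(\gamma)$ plays no role, since $\sigma_1(\wedge^2 g)=\sigma_1(g)\sigma_2(g)$ and $\ell_1(\wedge^2 g)=\ell_1(g)\ell_2(g)$ hold for every $g\in\mathsf{GL}_d(\mathbb{K})$.
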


We also establish the following bound comparing the ratio of the first and second singular value between two $1$-Anosov representations.

\begin{theorem} Let $\rho:\Gamma \rightarrow \mathsf{GL}_d(\mathbb{K})$ and $\psi:\Gamma \rightarrow \mathsf{GL}_m(\mathbb{K})$ be two representations. Suppose that $\rho$ is $1$-Anosov and $\psi$ is $\{1,2\}$-Anosov. There exists $J>0$ and $m\in \mathbb{Z}$, $0\leq m \leq \max\big\{0,\frac{d(d-1)}{2}-2\big\}$, such that for every $\gamma \in \Gamma$ we have: $$\frac{\sigma_2(\rho(\gamma))}{\sigma_1(\rho(\gamma))}\leq J \frac{\sigma_2(\psi(\gamma))^{\alpha_{\psi,\rho}}}{\sigma_1(\psi(\gamma))^{\alpha_{\psi,\rho}}} |\gamma|_X^{m}$$ where $\alpha_{\psi,\rho}:=\underset{\gamma \in \Gamma_{\infty}}{\inf}\frac{\log \frac{\ell_1}{\ell_2}(\rho(\gamma))}{\log\frac{\ell_1}{\ell_2}(\psi(\gamma))}.$\end{theorem}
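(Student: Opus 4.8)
The plan is to repeat the proof of Theorem~\ref{semisimple-bound} (upper bound) almost verbatim, replacing the decaying reference quantity $e^{-\alpha_{\rho}|\gamma|_X}$ throughout by $\big(\tfrac{\sigma_2}{\sigma_1}(\psi(\gamma))\big)^{\alpha_{\psi,\rho}}$. First one checks that $\alpha_{\psi,\rho}\in(0,\infty)$: the numerator $\log\tfrac{\ell_1}{\ell_2}(\rho(\gamma))$ is squeezed between two positive multiples of $|\gamma|_{X,\infty}$ (the lower bound from Theorem~\ref{Anosov}(iii) applied to the $1$-Anosov $\rho$, the upper bound since $\rho$ and $\rho(\gamma^{-1})$ are representations), and likewise for $\log\tfrac{\ell_1}{\ell_2}(\psi(\gamma))$ using that $\psi$ is $\{1,2\}$-Anosov. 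Moreover passing from $\rho,\psi$ to semisimplifications $\rho^{ss},\psi^{ss}$ leaves $\alpha_{\psi,\rho}$ unchanged (eigenvalue moduli are preserved); by \cite[Lem. 2.10]{Ts20} applied to $\psi$ and $\wedge^2\psi$ it leaves $\sigma_1(\psi(\gamma))$ and $\sigma_2(\psi(\gamma))$ unchanged up to bounded multiplicative error; and $\sigma_1(\rho(\gamma))\asymp\sigma_1(\rho^{ss}(\gamma))$ as in (\ref{ratio-semisimple}). So it suffices to bound $\tfrac{\sigma_2}{\sigma_1}(\rho^{ss}(\gamma))=\sigma_1(\wedge^2\rho(\gamma))/\sigma_1(\rho^{ss}(\gamma))^2$ by a constant times $\big(\tfrac{\sigma_2}{\sigma_1}(\psi^{ss}(\gamma))\big)^{\alpha_{\psi,\rho}}|\gamma|_X^{\ell-1}$, where $\ell\le\tfrac{d(d-1)}{2}-1$ is the number of irreducible factors of a semisimplification of $\wedge^2\rho$.

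I would then replace (\ref{12-semisimple}) by the base estimate $\tfrac{\sigma_2}{\sigma_1}(\rho^{ss}(\gamma))\le C\big(\tfrac{\sigma_2}{\sigma_1}(\psi^{ss}(\gamma))\big)^{\alpha_{\psi,\rho}}$: applying Proposition~\ref{12finitesubset} to the $\{1,2\}$-Anosov $\psi$ and the semisimple $\rho^{ss}$ produces, for each $\gamma$, an $f\in F$ with $\big||\gamma|_X-|\gamma f|_{X,\infty}\big|\le R$, with $\sigma_i(\rho^{ss}(\gamma))\asymp\ell_i(\rho^{ss}(\gamma f))$ for all $i$, and with $\sigma_i(\psi(\gamma))\asymp\ell_i(\psi(\gamma f))$ for $i=1,2$. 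For $|\gamma|_X>R$ the element $\gamma f$ has infinite order, so the definition of $\alpha_{\psi,\rho}$ gives $\tfrac{\ell_2}{\ell_1}(\rho(\gamma f))\le\big(\tfrac{\ell_2}{\ell_1}(\psi(\gamma f))\big)^{\alpha_{\psi,\rho}}$; feeding in the approximations (and $\ell_i(\rho)=\ell_i(\rho^{ss})$, $\ell_i(\psi)=\ell_i(\psi^{ss})$) yields the claim outside a finite set, which is absorbed in $C$. Together with (\ref{double-bound-semisimple}) applied to $\rho^{ss}$ this also gives the analogue of (\ref{irr-ineq}), bounding each $\sigma_1(\rho_i(\gamma))$ by $C'\sigma_1(\rho^{ss}(\gamma))^2\big(\tfrac{\sigma_2}{\sigma_1}(\psi^{ss}(\gamma))\big)^{\alpha_{\psi,\rho}}$.

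The subadditivity of $|\cdot|_X$ used in the induction of Theorem~\ref{semisimple-bound} is replaced by a submultiplicativity of $\tfrac{\sigma_2}{\sigma_1}(\psi^{ss}(\cdot))$ along aligned decompositions. Since $\psi^{ss}$ is $\{1,2\}$-Anosov, both $\psi^{ss}$ and $\wedge^2\psi^{ss}$ are $1$-Anosov, so \cite[Prop. 1.12]{Ts20} applied to both yields $\delta,t>0$ with $\tfrac{\sigma_2}{\sigma_1}(\psi^{ss}(w_1w_2))\ge\delta\,e^{t(|w_1w_2|_X-|w_1|_X-|w_2|_X)}\tfrac{\sigma_2}{\sigma_1}(\psi^{ss}(w_1))\tfrac{\sigma_2}{\sigma_1}(\psi^{ss}(w_2))$, obtained by dividing the $\wedge^2\psi^{ss}$-estimate (which controls $\sigma_1\sigma_2(\psi^{ss})$) by the square of the $\psi^{ss}$-estimate and using $\sigma_1(\psi^{ss}(w_1w_2))\le\sigma_1(\psi^{ss}(w_1))\sigma_1(\psi^{ss}(w_2))$. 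For the pieces $\mathsf{U}_{j\gamma},\mathsf{U}_{j\gamma}^{-1}\gamma$ arising from Observation~\ref{gen-set-0} the alignment defect is bounded below by (\ref{gen-set-eq3}), so after raising to the power $\alpha_{\psi,\rho}$ the exponential factor stays bounded below by a constant. With these substitutions the induction on the filtration $\psi_1,\dots,\psi_\ell=\wedge^2\rho$ of (\ref{decompose-1}) goes through: Lemma~\ref{gen-set-1} still combines the $\sigma_1(\rho^{ss})$-factors, the base estimate of the previous paragraph replaces (\ref{12-semisimple}), the submultiplicativity above replaces the step $e^{-\alpha_{\rho}(|\mathsf{U}_{j\gamma}|_X+|\mathsf{U}_{j\gamma}^{-1}\gamma|_X)}\le e^{-\alpha_{\rho}(|\gamma|_X-C_4)}$, and the sum over $j$ of $|\mathsf{U}_{j\gamma}|_X^{q-1}$ contributes the factor $|\gamma|_X$; one gets $\sigma_1(\psi_q(\gamma))\le D_q\,\sigma_1(\rho^{ss}(\gamma))^2\big(\tfrac{\sigma_2}{\sigma_1}(\psi^{ss}(\gamma))\big)^{\alpha_{\psi,\rho}}|\gamma|_X^{q-1}$. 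Taking $q=\ell$ and undoing the semisimplifications gives the statement with $m=\ell-1\le\tfrac{d(d-1)}{2}-2$. The delicate points are the base estimate --- in particular the passage through the eigenvalue inequality defining $\alpha_{\psi,\rho}$ and the handling of finite-order elements $\gamma f$ --- and checking that raising to the fixed positive power $\alpha_{\psi,\rho}$ does not disturb the bounded multiplicative constants propagated through the induction.
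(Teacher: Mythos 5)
Your proposal is correct and follows essentially the same route as the paper: the base estimate via Proposition \ref{12finitesubset} applied to $\psi$ and $\rho^{ss}$ together with the defining inequality of $\alpha_{\psi,\rho}$, then the induction along the filtration $\psi_1,\dots,\psi_\ell=\wedge^2\rho$ of Theorem \ref{semisimple-bound}, with the near-submultiplicativity of $\tfrac{\sigma_2}{\sigma_1}(\psi(\cdot))$ (obtained, as in the paper's inequality (\ref{ineq-wedge}), from Lemma \ref{gen-set-1}/\cite[Prop. 1.12]{Ts20} applied to $\psi$ and $\wedge^2\psi$) replacing the additivity of $|\cdot|_X$. The only cosmetic deviations are your optional passage to $\psi^{ss}$ and your explicit treatment of possible finite-order elements $\gamma f$, neither of which changes the argument.
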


\begin{proof} Let $\rho^{ss}:\Gamma \rightarrow \mathsf{GL}_d(\mathbb{K})$ be a semisimplification of $\rho$. Let $R>0$ and $F\subset \Gamma$ be a finite subset satisfying the conclusion of Proposition \ref{12finitesubset} for the semisimple representation $\rho^{ss}$ and the $\{1,2\}$-Anosov representation $\psi$. For $\gamma \in \Gamma$, there exists $f\in F$ such that: \begin{align*} \frac{\sigma_2(\rho^{ss}(\gamma))}{\sigma_1(\rho^{ss}(\gamma))}&\leq e^{2R} \frac{\ell_2(\rho^{ss}(\gamma f))}{\ell_1(\rho^{ss}(\gamma f))}\leq e^{2R}  \frac{\ell_2(\psi(\gamma f))^{\alpha_{\psi,\rho}}}{\ell_1(\psi(\gamma f))^{\alpha_{\psi,\rho}}}\\ &\leq e^{2R(1+\alpha_{\psi,\rho})} \frac{\sigma_2(\psi(\gamma))^{\alpha_{\psi,\rho}}}{\sigma_1(\psi(\gamma))^{\alpha_{\psi,\rho}}}.\end{align*} In particular, if we set $L_1:=e^{2R(1+\alpha_{\psi,\rho})}$, we conclude that: \begin{align}\label{ineq-semi-12} \frac{\sigma_2(\rho^{ss}(\gamma))}{\sigma_1(\rho^{ss}(\gamma))}\leq L_1 \frac{\sigma_2(\psi(\gamma))^{\alpha_{\psi,\rho}}}{\sigma_1(\psi(\gamma))^{\alpha_{\psi,\rho}}}, \ \
\forall \gamma \in \Gamma.\end{align} Therefore, the theorem holds true when $\rho$ is semisimple. 

Now we work similarly as in the proof of the upper bound of Theorem \ref{semisimple-bound}. Let $\gamma \in \Gamma$ with \hbox{$|\gamma|_X>1$,} write $\gamma=h_1\cdots h_p$, $h_i\in \mathcal{R}$ as in Observation \ref{gen-set-0}, and recall that we set $$\mathsf{U}_{0\gamma}:=e,\ \mathsf{U}_{j\gamma}:=h_1\cdots h_j \ \ j=1,\ldots,p.$$ By applying Lemma \ref{gen-set-1} for the $1$-Anosov representations $\psi$ and $\wedge^2\psi$, there exists $\epsilon>0$, depending only on $\psi$, such that: \begin{align}\label{ineq-wedge} \min_{1\leq j\leq \ell}\frac{\sigma_1(\psi (\gamma))}{\sigma_1(\psi(\mathsf{U}_{j\gamma}))\sigma_1(\psi(\mathsf{U}_{j\gamma}^{-1}\gamma))}\geq \epsilon,\ \min_{1\leq j\leq \ell} \frac{\sigma_1(\wedge^2 \psi (\gamma))}{\sigma_1(\wedge^2 \psi(\mathsf{U}_{j\gamma}))\sigma_1(\wedge^2 \psi(\mathsf{U}_{j\gamma}^{-1}\gamma))}\geq \epsilon. \end{align}

Now, without loss of generality, we may consider decompositions $\wedge^2\mathbb{K}^d=V_1\oplus\cdots \oplus V_{\ell}$ for $(\wedge^2 \rho)^{ss}$ with blocks $\psi_i:\Gamma\rightarrow \mathsf{GL}(V_i)$, $1\leq \ell\leq \frac{d(d-1)}{2}-1$, similarly as in (\ref{decompose-1}). We are going to prove inductively that for every $1\leq q\leq \ell$, there exists $R_{q}>0$ such that: \begin{equation}\label{ineq-semi-123}\sigma_1(\psi_q(\gamma))\leq R_q \sigma_1(\rho^{ss}(\gamma))^2  \frac{\sigma_2(\psi(\gamma))^{\alpha_{\psi,\rho}}}{\sigma_1(\psi(\gamma))^{\alpha_{\psi,\rho}}}|\gamma|_{X}^{q-1} \ \ \forall \gamma \in \Gamma.\end{equation} Note that the statement is true for $q=1$, thanks to the fact that $\psi_0:=\rho_1$ is semisimple. Now suppose the statement holds true for $q<\ell$. By working similarly as in the proof of Theorem \ref{semisimple-bound}, using the inductive step and (\ref{double-bound-semisimple}), (\ref{gen-set-eq2}), (\ref{ineq-semi-12}), (\ref{ineq-wedge}), we obtain the estimates: \begin{align*}\frac{| |u_q(\gamma) | |}{\sigma_1(\rho^{ss}(\gamma))^2} & \leqslant D_0^2 \sum_{j=0}^{p-1} \frac{1}{\sigma_1(\rho^{ss}(\gamma))^2} \sigma_1\big(\psi_{q}(\mathsf{U}_{j\gamma})\big) \sigma_1\big(\rho_{q+1}(\mathsf{U}_{j\gamma}^{-1}\gamma)\big)\\ &\leq D_0^2 R_q \sum_{j=0}^{p-1} \big|\mathsf{U}_{j\gamma}\big|_X^q \frac{\sigma_2(\psi(\mathsf{U}_{j\gamma}))^{\alpha_{\psi,\rho}}}{\sigma_1(\psi(\mathsf{U}_{j\gamma}))^{\alpha_{\psi,\rho}}}\frac{\sigma_1(\rho^{ss}(\mathsf{U}_{j\gamma}))^2}{\sigma_1(\rho^{ss}(\gamma))^2}\sigma_1\big((\wedge^2\rho)^{ss}(\mathsf{U}_{j\gamma}^{-1}\gamma)\big)\\ &\leq D_0^2 R_qC_3\epsilon^{-2} \sum_{j=0}^{p-1} \big|\mathsf{U}_{j\gamma}\big|_X^{q-1} \frac{\sigma_2(\psi(\mathsf{U}_{j\gamma}))^{\alpha_{\psi,\rho}}}{\sigma_1(\psi(\mathsf{U}_{j\gamma}))^{\alpha_{\psi,\rho}}}\frac{\sigma_2(\rho^{ss}(\mathsf{U}_{j\gamma}^{-1}\gamma))}{\sigma_1(\rho^{ss}(\mathsf{U}_{j\gamma}^{-1}\gamma))}\\ &\leq D_0^2 R_qC_3L_1\epsilon^{-2}\sum_{j=0}^{p-1} \big|\mathsf{U}_{j\gamma}\big|_X^{q-1}  \frac{\sigma_2(\psi(\mathsf{U}_{j\gamma}))^{\alpha_{\psi,\rho}}}{\sigma_1(\psi(\mathsf{U}_{j\gamma}))^{\alpha_{\psi,\rho}}}\frac{\sigma_2(\psi(\mathsf{U}_{j\gamma}^{-1}\gamma))^{\alpha_{\psi,\rho}}}{\sigma_1(\psi(\mathsf{U}_{j\gamma}^{-1}\gamma))^{\alpha_{\psi,\rho}}}\\ &= D_0^2 R_qC_3L_1\epsilon^{-2}\sum_{j=0}^{p-1} \big|\mathsf{U}_{j\gamma}\big|_X^{q-1}  \frac{\sigma_1(\wedge^2\psi(\mathsf{U}_{j\gamma}))^{\alpha_{\psi,\rho}}}{\sigma_1(\psi(\mathsf{U}_{j\gamma}))^{2\alpha_{\psi,\rho}}}\frac{\sigma_1(\wedge^2 \psi(\mathsf{U}_{j\gamma}^{-1}\gamma))^{\alpha_{\psi,\rho}}}{\sigma_1(\psi(\mathsf{U}_{j\gamma}^{-1}\gamma))^{2\alpha_{\psi,\rho}}}\\ & \leq D_0^2 R_qC_3L_1\epsilon^{-2-3\alpha_{\psi,\rho}} \sum_{j=0}^{p-1} \big|\mathsf{U}_{j\gamma}\big|_X^{q-1}  \frac{\sigma_2(\psi(\gamma))^{\alpha_{\psi,\rho}}}{\sigma_1(\psi(\gamma))^{\alpha_{\psi,\rho}}}\\ &  \leq 2^{q} D_0^2 R_qC_3L_1\epsilon^{-2-3\alpha_{\psi,\rho}}  \frac{\sigma_2(\psi(\gamma))^{\alpha_{\psi,\rho}}}{\sigma_1(\psi(\gamma))^{\alpha_{\psi,\rho}}} |\gamma |_X^{q}. \end{align*} Using this last estimate we deduce that $$\sigma_1(\psi_{q+1}(\gamma))\leq \sigma_1(\rho_{q+1}(\gamma))+\sigma_1(\psi_q(\gamma))+\big|\big|u_q(\gamma)\big|\big|\leq R_{q+1} \frac{\sigma_2(\psi(\gamma))^{\alpha_{\psi,\rho}}}{\sigma_1(\psi(\gamma))^{\alpha_{\psi,\rho}}} |\gamma |_X^{q} \ \ \forall \gamma \in \Gamma,$$ where $R_{q+1}>0$ is a constant depending only on $\psi$ and $\rho$. This completes the proof of the induction and of (\ref{ineq-semi-123}). Since $\psi_{\ell}=\wedge^2 \rho$, by applying (\ref{ineq-semi-123}) for $q=\ell$ we  finish the proof of the estimate. \end{proof}

\begin{corollary}\label{equality-exp-1}  Let $\rho:\Gamma \rightarrow \mathsf{GL}_d(\mathbb{K})$ and $\psi:\Gamma \rightarrow \mathsf{GL}_m(\mathbb{K})$ be two representations. Suppose that $\rho$ is $1$-Anosov and $\psi$ is $\{1,2\}$-Anosov. \hbox{Then the following equality holds:} \begin{align*} \inf_{\gamma\in \Gamma_{\infty}}\frac{\log \frac{\ell_1}{\ell_2}(\rho(\gamma))}{\log \frac{\ell_1}{\ell_2}(\psi(\gamma))}&=\sup_{n\geq 1}\inf_{|\gamma|_X\geq n}\frac{\log \frac{\sigma_1}{\sigma_2}(\rho(\gamma))}{\log \frac{\sigma_1}{\sigma_2}(\psi(\gamma))}.\end{align*}\end{corollary}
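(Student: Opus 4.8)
The plan is to establish the two inequalities between $A:=\alpha_{\psi,\rho}=\inf_{\gamma\in\Gamma_\infty}\frac{\log\frac{\ell_1}{\ell_2}(\rho(\gamma))}{\log\frac{\ell_1}{\ell_2}(\psi(\gamma))}$ and $B:=\sup_{n\geq 1}\inf_{|\gamma|_X\geq n}\frac{\log\frac{\sigma_1}{\sigma_2}(\rho(\gamma))}{\log\frac{\sigma_1}{\sigma_2}(\psi(\gamma))}$, following the scheme of the proof of Corollary \ref{equality-exp}. First I would record a preliminary normalization: since $\rho$ is $1$-Anosov and $\psi$ is $\{1,2\}$-Anosov, Theorem \ref{Anosov}(ii) together with the fact that the orbit map $\Gamma\to X$ is a quasi-isometry provides $n_0\in\mathbb{N}$ and $c>0$ such that $\log\frac{\sigma_1}{\sigma_2}(\rho(\gamma))>0$ and $\log\frac{\sigma_1}{\sigma_2}(\psi(\gamma))\geq c\,|\gamma|_X$ whenever $|\gamma|_X\geq n_0$; in particular all the quotients occurring in the definition of $B$ are well defined and positive for $|\gamma|_X\geq n_0$, so the supremum over $n$ may be taken over $n\geq n_0$.

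For the inequality $B\geq A$, I would take logarithms in the estimate of the preceding theorem, namely $\frac{\sigma_2}{\sigma_1}(\rho(\gamma))\leq J\big(\frac{\sigma_2}{\sigma_1}(\psi(\gamma))\big)^{A}|\gamma|_X^{m}$, to obtain
\begin{equation*}\log\frac{\sigma_1}{\sigma_2}(\rho(\gamma))\;\geq\; A\,\log\frac{\sigma_1}{\sigma_2}(\psi(\gamma))-m\log|\gamma|_X-\log J\end{equation*}
for all $\gamma$ with $|\gamma|_X\geq\max\{n_0,1\}$. Dividing both sides by $\log\frac{\sigma_1}{\sigma_2}(\psi(\gamma))\geq c\,|\gamma|_X$ and using that $\frac{m\log|\gamma|_X+|\log J|}{c\,|\gamma|_X}\to 0$ as $|\gamma|_X\to\infty$, one finds that for every $\delta>0$ there is $n\geq n_0$ with $\inf_{|\gamma|_X\geq n}\frac{\log\frac{\sigma_1}{\sigma_2}(\rho(\gamma))}{\log\frac{\sigma_1}{\sigma_2}(\psi(\gamma))}\geq A-\delta$; letting $\delta\to 0$ gives $B\geq A$.

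For the reverse inequality $B\leq A$, I would evaluate along powers of a fixed $\gamma_0\in\Gamma_\infty$. Since $\lambda(g)=\lim_{n\to\infty}\frac1n\mu(g^n)$ coordinatewise for every $g\in\mathsf{GL}_d(\mathbb{K})$, one has $\lim_{n\to\infty}\frac1n\log\frac{\sigma_1}{\sigma_2}(\rho(\gamma_0^n))=\log\frac{\ell_1}{\ell_2}(\rho(\gamma_0))$ and likewise $\lim_{n\to\infty}\frac1n\log\frac{\sigma_1}{\sigma_2}(\psi(\gamma_0^n))=\log\frac{\ell_1}{\ell_2}(\psi(\gamma_0))$, both limits being positive by Theorem \ref{Anosov}(iii) since $\gamma_0$ has infinite order. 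Hence
\begin{equation*}\lim_{n\to\infty}\frac{\log\frac{\sigma_1}{\sigma_2}(\rho(\gamma_0^n))}{\log\frac{\sigma_1}{\sigma_2}(\psi(\gamma_0^n))}=\frac{\log\frac{\ell_1}{\ell_2}(\rho(\gamma_0))}{\log\frac{\ell_1}{\ell_2}(\psi(\gamma_0))}.\end{equation*}
Because $|\gamma_0^n|_X\to\infty$, for each $n$ the quantity $\inf_{|\gamma|_X\geq n}\frac{\log\frac{\sigma_1}{\sigma_2}(\rho(\gamma))}{\log\frac{\sigma_1}{\sigma_2}(\psi(\gamma))}$ is bounded above by a tail of the sequence on the left, so $B\leq\frac{\log\frac{\ell_1}{\ell_2}(\rho(\gamma_0))}{\log\frac{\ell_1}{\ell_2}(\psi(\gamma_0))}$; taking the infimum over $\gamma_0\in\Gamma_\infty$ yields $B\leq A$, and combining with the previous paragraph gives $A=B$.

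I expect the only delicate points to be bookkeeping: ensuring that numerator and denominator are genuinely defined and strictly positive once $|\gamma|_X$ is large (so that passing to logarithms and dividing is legitimate), and checking that the polynomial factor $|\gamma|_X^{m}$ and the constant $J$ from the preceding theorem are absorbed into an error term that vanishes after normalization by $\log\frac{\sigma_1}{\sigma_2}(\psi(\gamma))$, which is comparable to $|\gamma|_X$. The substantive half $B\geq A$ is already contained in the theorem proved just above, while $B\leq A$ is the standard ``periodic elements realize the Lyapunov exponents'' argument, so no ingredient beyond those used for Corollary \ref{equality-exp} is required.
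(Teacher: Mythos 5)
Your proposal is correct and is essentially the derivation the paper intends: the corollary is stated without a separate proof precisely because the inequality $B\geq A$ is read off from the preceding theorem (taking logarithms and absorbing $\log J+m\log|\gamma|_X$ using $\log\frac{\sigma_1}{\sigma_2}(\psi(\gamma))\gtrsim |\gamma|_X$, exactly as you do), while $B\leq A$ is the standard observation that powers of an infinite-order element realize the eigenvalue gaps, $\frac{1}{n}\log\frac{\sigma_1}{\sigma_2}(\rho(\gamma_0^n))\to\log\frac{\ell_1}{\ell_2}(\rho(\gamma_0))$, mirroring the paper's Corollary \ref{equality-exp}. Your bookkeeping about positivity of the denominators for $|\gamma|_X$ large (via Theorem \ref{Anosov}) is the right way to make the sup--inf expression well defined, so no gap remains.
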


\section{Proof of Theorem \ref{exponent}} \label{proofthm1.2}

In this section we prove Theorem \ref{exponent}. Recall that we fix a model space $(X,d_X)$ for the hyperbolic group $\Gamma$ and a visual metric $d_a,a>1,$ on $\partial_{\infty}X$ satisfying (\ref{visualineq}). Recall that for a representaion $\rho:\Gamma \rightarrow \mathsf{GL}_d(\mathbb{K})$, a $\rho$-equivariant map $\xi_{\rho}:\partial_{\infty}X\rightarrow \mathbb{P}(\mathbb{K}^d)$ is called spanning if $\langle \xi_{\rho}(\partial_{\infty}X)\rangle=\mathbb{K}^d$.

We will need the following lemma which generalizes \cite[Lem. 6.8]{Sambarino} and gives an upper bound for the H\"older exponent of the Anosov limit maps in terms of singular value gaps.

\begin{lemma} \label{doublebound} Let $\rho:\Gamma \rightarrow \mathsf{GL}_{d}(\mathbb{K})$ be a representation which admits a continuous $\rho$-equivariant spanning map $\xi_{\rho}: (\partial_{\infty}X,d_a)\rightarrow (\mathbb{P}(\mathbb{K}^d),d_{\mathbb{P}})$. Suppose that $\xi_{\rho}$ is bi-H\"older continuous, i.e. there exist $\beta\geq \alpha>0$ and $C>1$ such that for every $x,y \in \partial_{\infty}X$, $$C^{-1}d_a(x,y)^{\beta}\leq d_{\mathbb{P}}\big(\xi_{\rho}(x),\xi_{\rho}(y)\big) \leq Cd_{a}(x,y)^{\alpha}.$$Let $(\gamma_n)_{n \in \mathbb{N}}$ be an infinite sequence of elements of $\Gamma$. Then \begin{align*} \alpha \leq \frac{1}{\log a} \varliminf_{n \rightarrow \infty}\frac{\log \frac{\sigma_1}{\sigma_2}(\rho(\gamma_n))}{|\gamma_n|_{X}} \leq  \frac{1}{\log a} \varlimsup_{n \rightarrow \infty}\frac{\log \frac{\sigma_1}{\sigma_2}(\rho(\gamma_n))}{|\gamma_n|_{X}}\leq \beta.\end{align*} \end{lemma}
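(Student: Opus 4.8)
The plan is to exploit the equivariance of $\xi_\rho$ together with the action of $\rho(\gamma_n)$ on $\mathbb{P}(\mathbb{K}^d)$, comparing how $\rho(\gamma_n)$ distorts projective distances near its attracting direction with how $\gamma_n$ distorts the visual metric near its attracting fixed point. First I would dispose of the easy halves using the dynamics of $\gamma_n$ on $\partial_\infty X$. Passing to a subsequence, assume $\gamma_n x_0 \to x^+$ and $\gamma_n^{-1} x_0 \to x^-$ in $\overline{X}$. For points $x$ staying in a fixed compact subset of $\partial_\infty X \smallsetminus \{x^-\}$, one has $(\gamma_n x \cdot \gamma_n y)_{x_0} \approx |\gamma_n|_X + (x\cdot y)_{x_0} + O(1)$ by Lemma \ref{Gromovproduct1} (since $(x\cdot \gamma_n^{-1}x_0)_{x_0}$ and $(y\cdot\gamma_n^{-1}x_0)_{x_0}$ stay bounded), hence $d_a(\gamma_n x,\gamma_n y) \asymp a^{-|\gamma_n|_X} d_a(x,y)$ up to multiplicative constants independent of $n$. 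Meanwhile $\rho(\gamma_n)$ acts on $\mathbb{P}(\mathbb{K}^d)$: writing $\rho(\gamma_n) = k_n \exp(\mu(\rho(\gamma_n))) k_n'$, for directions $[v]$ bounded away from the repelling hyperplane of $\rho(\gamma_n)$ one contracts toward $[\xi_\rho(x^+)]$ at rate controlled by $\frac{\sigma_2}{\sigma_1}(\rho(\gamma_n))$; more precisely the derivative of $\rho(\gamma_n)$ on $\mathbb{P}(\mathbb{K}^d)$ at such a direction has operator norm $\asymp \frac{\sigma_2}{\sigma_1}(\rho(\gamma_n))$ from above and, restricted to a suitable two-plane, $\asymp \frac{\sigma_2}{\sigma_1}(\rho(\gamma_n))$ from below.

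Next I would combine these. Fix two distinct points $p, q \in \partial_\infty X$ realizing generic position (both outside a neighborhood of $x^-$, and with $\xi_\rho(p), \xi_\rho(q)$ both transverse to the repelling hyperplanes of the relevant $\rho(\gamma_n)$ — achievable since $\xi_\rho$ is spanning, so generic directions avoid any fixed hyperplane and one can shrink neighborhoods). Applying the bi-H\"older hypothesis to the pair $(\gamma_n p, \gamma_n q)$:
\begin{align*}
d_{\mathbb{P}}\big(\xi_\rho(\gamma_n p), \xi_\rho(\gamma_n q)\big) &= d_{\mathbb{P}}\big(\rho(\gamma_n)\xi_\rho(p), \rho(\gamma_n)\xi_\rho(q)\big) \asymp \tfrac{\sigma_2}{\sigma_1}(\rho(\gamma_n))\, d_{\mathbb{P}}\big(\xi_\rho(p),\xi_\rho(q)\big),\\
d_a(\gamma_n p, \gamma_n q) &\asymp a^{-|\gamma_n|_X} d_a(p,q).
\end{align*}
The upper H\"older bound $d_{\mathbb{P}}(\xi_\rho(\gamma_n p),\xi_\rho(\gamma_n q)) \leq C d_a(\gamma_n p,\gamma_n q)^\alpha$ then yields $\frac{\sigma_2}{\sigma_1}(\rho(\gamma_n)) \leq C' a^{-\alpha |\gamma_n|_X}$, i.e. $\frac{\log\frac{\sigma_1}{\sigma_2}(\rho(\gamma_n))}{|\gamma_n|_X} \geq \alpha\log a - o(1)$, giving the left inequality after taking $\varliminf$. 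The lower H\"older bound applied the same way gives $\frac{\sigma_2}{\sigma_1}(\rho(\gamma_n)) \geq C'' a^{-\beta|\gamma_n|_X}$, hence the right inequality after $\varlimsup$. The middle inequality is trivial.

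The main obstacle is uniformity of the constants in $n$: one must ensure that the chosen points $\gamma_n p, \gamma_n q$ (equivalently $p,q$) stay in a region where both the Gromov-product estimate and the projective contraction estimate hold with $n$-independent constants. For the Gromov product this is Lemma \ref{Gromovproduct1} together with keeping $p,q$ away from $x^- = \lim \gamma_n^{-1}x_0$. For the projective estimate one needs $\xi_\rho(p), \xi_\rho(q)$ to have a definite distance from the repelling hyperplane $\ker(k_n')$ of $\rho(\gamma_n)$; since $\rho$ is $1$-Anosov this repelling hyperplane converges (along the subsequence) to a fixed limit hyperplane $\xi_\rho^{d-1}(x^-)$, and transversality of the limit maps plus spanning of $\xi_\rho$ let us pick $p,q$ (and shrink the relevant neighborhood of $w_0$ in applications) so that $\xi_\rho(p),\xi_\rho(q)$ stay uniformly transverse to all $\ker(k_n')$ for large $n$. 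Once uniformity is secured, the estimates above are routine singular-value computations in the Cartan decomposition analogous to those in Lemma \ref{mainlemma2'}.
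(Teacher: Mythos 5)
There is a genuine gap in the half $\alpha \leq \frac{1}{\log a}\varliminf_{n}\frac{\log\frac{\sigma_1}{\sigma_2}(\rho(\gamma_n))}{|\gamma_n|_{X}}$. That inequality requires the \emph{lower} bound $d_{\mathbb{P}}\big(\rho(\gamma_n)\xi_{\rho}(p),\rho(\gamma_n)\xi_{\rho}(q)\big)\gtrsim \frac{\sigma_2}{\sigma_1}(\rho(\gamma_n))$, and you justify it only by asking that $\xi_{\rho}(p),\xi_{\rho}(q)$ stay uniformly transverse to the repelling hyperplanes of $\rho(\gamma_n)$. Transversality gives the \emph{upper} bound of order $\frac{\sigma_2}{\sigma_1}$, but not the lower one: writing $\rho(\gamma_n)=k_n\exp(\mu(\rho(\gamma_n)))k_n'$, if the displacement between the $k_n'$-coordinates of the two image points has no definite component in the $e_2$-direction, the image distance is only of order $\frac{\sigma_3}{\sigma_1}$ or smaller. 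For instance, for $g=\mathrm{diag}(\sigma_1,\sigma_2,\sigma_3)$, $v_1=e_1$ and $v_2=(e_1+e_3)/\sqrt{2}$ are both far from the repelling hyperplane, yet $d_{\mathbb{P}}([gv_1],[gv_2])\asymp \sigma_3/\sigma_1\ll\sigma_2/\sigma_1$. You do acknowledge that the derivative lower bound holds only ``restricted to a suitable two-plane'', but you never show that $p,q$ can be chosen so that $\xi_{\rho}(p),\xi_{\rho}(q)$ actually differ along that two-plane uniformly in $n$; nothing in ``bounded away from $\ker$'' forces this, and for a non-hyperconvex representation the limit set can locally align with lower singular directions (this is exactly why Lemma \ref{mainlemma}(ii) needs hyperconvexity, while the general lower bound of Lemma \ref{mainlemma2'} is only $\frac{\sigma_d\sigma_{d-1}}{\sigma_1^2}$). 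The paper closes precisely this gap with the spanning hypothesis plus minimality of the $\Gamma$-action: after passing to a subsequence with $k_n'\to k'$, the image $\xi_{\rho}(B_{\varepsilon}(w_0))$ is not contained in a finite union of projective hyperplanes, so the ratio $g(y)=\langle k'h(y)e_1,e_2\rangle/\langle k'h(y)e_1,e_1\rangle$ is non-constant, and one picks $z,x$ with $g(z)\neq g(x)$ (and distinct from $\lim_n\gamma_n^{-1}x_0$); the non-vanishing of $g(z)-g(x)$ is exactly what makes the second singular direction visible and yields the two-sided estimate $\asymp\frac{\sigma_2}{\sigma_1}$.

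A secondary issue: the lemma does not assume $\rho$ is $1$-Anosov, so you may not invoke the hyperplane limit map $\xi_{\rho}^{d-1}$ or convergence of the repelling hyperplanes to $\xi_{\rho}^{d-1}(x^-)$. This is repairable (compactness of $\mathsf{K}_d$ gives $k_n'\to k'$ along a subsequence, and spanning lets you avoid the single limiting hyperplane), but as written it imports a hypothesis you do not have. The remaining ingredients of your sketch --- the Gromov-product estimate via Lemma \ref{Gromovproduct1} with $p,q$ kept away from $\lim_n\gamma_n^{-1}x_0$, so that $d_a(\gamma_np,\gamma_nq)\asymp a^{-|\gamma_n|_X}$, and the derivation of the right-hand inequality from the upper projective bound together with the lower H\"older bound --- are correct and follow the same strategy as the paper.
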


\begin{proof} Without loss of generality we may assume that $\lim_n\frac{\log \frac{\sigma_1}{\sigma_2}(\rho(\gamma_n))}{|\gamma_n|_{X}}$ exists, so we continue working under this assumption. We may write $$\rho(\gamma_n)=k_{n}\exp(\mu(\rho(\gamma_n)))k_{n}'\ k_{n},k_{n}'\in \mathsf{K}_d,$$ in the Cartan decomposition of $\mathsf{GL}_d(\mathbb{K})$ and, up to passing to a subsequence, we may assume that $\lim_{n}k_{n}'=k'$.

Since $\xi_{\rho}$ is spanning and $\Gamma$ acts minimally on $\partial_{\infty}X$, for every open subset $B\subset \partial_{\infty}X$, $\xi_{\rho}(B)$ spans $\mathbb{K}^d$ and is not contained in a finite union of projective hyperplanes\footnote{To see this, assume that $\xi_{\rho}(B)\subset \bigcup_{i=1}^{s}\mathbb{P}(V_i)$, with $s$ minimal and $\textup{dim}_{\mathbb{K}}V_i=d-1$. Then either $s=1$ or $s\geq 2$ and there is $x\in B$ and $i\in \{1,\ldots,s\}$ such that $x\in \mathbb{P}(V_i)\smallsetminus \bigcup_{j\neq i}\mathbb{P}(V_j)$. In the first case, $\xi_{\rho}(B)\subset \mathbb{P}(V_i)$ and in the second, by the choice of $x$, there is $B'\subset B$ open, containing $x$, such that $\xi_{\rho}(B')\subset \mathbb{P}(V_i)$. However, this is absurd since $\xi_{\rho}(B')$ spans $\mathbb{K}^d$.}. We may fix $w_0\in \partial_{\infty}X$ such that $\xi_{\rho}(w_0)\notin \mathbb{P}((k')^{-1}\langle e_2,\ldots,e_d\rangle)\cup \mathbb{P}((k')^{-1}\langle \{e_r:r\neq 2\}\rangle)$ and choose $U\subset \mathbb{P}(\mathbb{K}^d)$ an open set with $\xi_{\rho}(w_0)\in U$ and a local trivialization $p:U\rightarrow \mathsf{K}_d$ of the bundle $\pi:\mathsf{K}_d\twoheadrightarrow \mathbb{P}(\mathbb{K}^d)$ (i.e. $y=[p(y)e_1]$ for every $y\in U$). By considering $\varepsilon>0$ small enough we may assume that:
\medskip

\noindent $\textup{(i)}$ if $d_a(w_0,y)<\varepsilon$, $\xi_{\rho}^{1}(y)=[h(y)e_1]$, then $\langle k'h(y)e_1,e_1\rangle \neq 0$ and $\langle k'h(y) e_1,e_2 \rangle \neq 0$.

\noindent $\textup{(ii)}$ the continuous map $g:B_{\varepsilon}(w_0)\rightarrow \mathbb{R}$, $y \mapsto g(y):=\frac{\langle k'h(y)e_1,e_2\rangle }{\langle k'h(y)e_1,e_1 \rangle}$, is not constant.

\medskip
\noindent Therefore, we may choose $z,x \in B_{\varepsilon}(w_0)$ such that $g(z)\neq g(x)$, $z,x \neq \lim_{n}\gamma_n^{-1}x_0$. We set \begin{align*} a_{z,i,n}&:=\frac{\sigma_i(\rho(\gamma_n))}{\sigma_1(\rho(\gamma_n))}\frac{\langle k_{n}'h(z)e_1,e_i \rangle }{\langle k_{n}'h(z)e_1,e_1 \rangle}, \ \ v_{z,n}:=\sum_{i>2}^{}a_{z,i,n} e_i, \\  a_{x,i,n}&:=\frac{\sigma_i(\rho(\gamma_n))}{\sigma_1(\rho(\gamma_n))}\frac{\langle k_{n}'h(x)e_1,e_i \rangle }{\langle k_{n}'h(x)e_1,e_1 \rangle}, \ \ v_{x,n}:=\sum_{i>2}^{}a_{x,i,n} e_i.\end{align*} Since $z,x \in U$, there exists $0<\delta<1$ with $|\langle k_{n}'h(z)e_1,e_1\rangle| \geqslant \delta$ and $|\langle k_{n}'h(x)e_1,e_1\rangle| \geqslant \delta$ for every $n \in \mathbb{N}$. In particular, for every $2 \leq i \leq d$, we have $$\max \big \{|a_{z,i,n}|, |a_{x,i,n}|\big\} \leq \frac{\sigma_2(\rho(\gamma_n))}{\sigma_1(\rho(\gamma_n))}\frac{1}{\delta}, \ \max \big \{ ||v_{z,n}||, ||v_{x,n}||\big\} \leq \frac{\sigma_2(\rho(\gamma_n))}{\sigma_1(\rho(\gamma_n))}\frac{d-2}{\delta}.$$

Now we observe that for every $n\in \mathbb{N}$, \begin{align*}  & d_{\mathbb{P}}\big(\xi_{\rho}(\gamma_n z),\xi_{\rho}(\gamma_n x)\big)^2=d_{\mathbb{P}}\big([\exp(\mu(\rho(\gamma_n))k_{n}'h(z)e_{1}],[\exp(\mu(\rho(\gamma_n))k_{n}'h(x)e_1] \big)^2 \\ &=\frac{(a_{z,2,n}-a_{x,2,n})^2+||a_{x,2,n}v_{z,n}-a_{z,2,n}v_{x,n}||^2+||v_{z,n}-v_{x,n}||^2+||v_{z,n}||^2||v_{x,n}||^2-\langle v_{z,n},v_{x,n}\rangle^2}{\big(1+a_{z,2,n}^2+||v_{z,n}||^2\big)\big(1+a_{x,2,n}^2+||v_{x,n}||^2\big)}. \end{align*} By using the fact that \hbox{$\lim_n \frac{\sigma_1(\rho(\gamma_n))}{\sigma_{2}(\rho(\gamma_n))}(a_{z,2,n}-a_{x,2,n})=g(z)-g(x)\neq 0$} we deduce the bounds: \begin{equation} \label{exponent-eq6} \begin{split}(g(x)-g(z))^2 \frac{\delta^4}{4d^2} \frac{\sigma_2(\rho(\gamma_n))^2}{\sigma_1(\rho(\gamma_n))^2} \leq d_{\mathbb{P}}\big(\xi_{\rho}(\gamma_n z),\xi_{\rho}(\gamma_n x)\big)^2  \leqslant \frac{13d^4}{\delta^4}\frac{\sigma_2(\rho(\gamma_n))^2}{\sigma_1(\rho(\gamma_n))^2} \end{split} \end{equation} for sufficiently large $n \in \mathbb{N}$. Since $z,x\neq \lim_{n}\gamma_{n}^{-1}x_0$, it follows by Lemma \ref{Gromovproduct1} $$\sup_{n\in \mathbb{N}} \Big|(\gamma_n x\cdot \gamma_n z)_{x_0}-|\gamma_n|_{X}\Big|<\infty$$ and hence $\varliminf_{n}d_{a}(\gamma_n x,\gamma_n z) a^{|\gamma_n|_{X}}>0$. Thus, by (\ref{exponent-eq6}) we have $$\alpha \leq \lim_{n \rightarrow \infty} \frac{\log d_{\mathbb{P}}(\xi_{\rho}(\gamma_nx),\xi_{\rho}(\gamma_n z))}{\log d_{a}(\gamma_n x,\gamma_n z)}=\frac{1}{\log a}\lim_{n \rightarrow \infty} \frac{1}{|\gamma_n|_{X}}\log \frac{\sigma_1(\rho(\gamma_n))}{\sigma_2(\rho(\gamma_n))} \leq \beta.$$ This completes the proof of the lemma.\end{proof} 

 Let us recall that for a $1$-Anosov representation $\rho:\Gamma \rightarrow \mathsf{GL}_d(\mathbb{K})$ we set $$\alpha_{\rho}:= \underset{\gamma \in \Gamma_{\infty}}{\inf}\frac{\log \frac{\ell_1}{\ell_2} (\rho(\gamma))}{|\gamma|_{X,\infty}}>0.$$

First, let us prove Corollary \ref{log-bound} which we use in the proof of part of Theorem \ref{exponent}.

\begin{proof}[Proof of Corollary \ref{log-bound}] Fix $w_0,z\in \partial_{\infty}X$ with $z\neq w_0$. Since $\rho$ is $1$-Anosov, by Lemma \ref{mainlemma} (i) and Lemma \ref{triple} we may find $C_0,C_{\rho}>0$ and $0<\varepsilon<1$ depending on $\rho$, with the property: if $x,y\in B_{\varepsilon}(w_0)$ are distinct, there exists $\gamma \in \Gamma$ such that $\gamma^{-1}(x,y,z)\in \mathcal{F}$ and \begin{align}\label{double-ineq-00}||\gamma|_X-(x\cdot y)_{x_0}|\leq C_0, \ d_{\mathbb{P}}\big(\xi_{\rho}^1(x)),\xi_{\rho}^1(y)\big)\leq C_{\rho} \frac{\sigma_2(\rho(\gamma))}{\sigma_1(\rho(\gamma))}.\end{align} By Theorem \ref{semisimple-bound} and (\ref{double-ineq-00}), we may find $C_{\rho}'>0$, depending only on $\rho$, with the property that for every $x,y\in B_{\varepsilon}(w_0)$ and $\gamma \in \Gamma$ with $\gamma^{-1}(x,y,z)\in \mathcal{F}$ we have \begin{align}\label{proper-ineq}\begin{split}  d_{\mathbb{P}}\big(\xi_{\rho}^1(x)),\xi_{\rho}^1(y)\big)& \leq C_{\rho}\frac{\sigma_1(\rho(\gamma))}{\sigma_2(\rho(\gamma))}\leq C_{\rho}' e^{-\alpha_{\rho}|\gamma|_X} |\gamma|_X^{m}\\ &\leq C_{\rho}' e^{-\alpha_{\rho}C_0}e^{-\alpha_{\rho}(x\cdot y)_{x_0}}\big(C_0+(x\cdot y)_{x_0}\big)^{m}\\ & \leq \big(C_{\rho}'L^m e^{-\alpha_{\rho}C_0}r^{\alpha_{\rho}}\big)d_a(x,y)^{\frac{\alpha_{\rho}}{\log a}}\big|\log d_a(x,y)\big|^m,\end{split}\end{align} $L:=2\max\big\{C_0+\frac{\log r}{\log a},\frac{1}{\log a}\big\}$. 

Now, by using the North-South pole dynamics of hyperbolic elements of $\Gamma$, we may choose infinite order elements $h_1,h_2,h_3\in \Gamma$ with the property that for every $x,y\in \partial_{\infty}X$, there exists $i\in \{1,2,3\}$ with $h_i x,h_i y\in B_{\varepsilon}(w_0)$. By Lemma \ref{Gromovproduct1}, there exists $\mathcal{Q}>1$, depending only on $h_{1,2,3}\in \Gamma$ and the matrices $\rho(h_{1,2,3})\in \mathsf{GL}_d(\mathbb{K})$, with the property that for every $y_1,y_2\in \partial_{\infty}X$, \begin{align}\label{Lip-bd}d_a(y_1,y_2)\leq \mathcal{Q}d_a(h_i y_1,h_i y_2), \ d_{\mathbb{P}}\big(\xi_{\rho}^1(h_i y_1),\xi_{\rho}^1(h_i y_2)\big)\leq \mathcal{Q} d_{\mathbb{P}}\big(\xi_{\rho}^1(y_1),\xi_{\rho}^1(y_2)\big).\end{align} If $x,y\in \partial_{\infty}X$ is a pair of distinct points, we may write $x=h_ix',y=h_iy'$, for some $x',y'\in B_{\varepsilon}(w_0)$ and $i\in \{1,2,3\}$, and by (\ref{proper-ineq}) and (\ref{Lip-bd}) we conclude that \begin{align}\label{logbound-1}d_{\mathbb{P}}\big(\xi_{\rho}^1(x),\xi_{\rho}^1(y)\big)\leq Cd_a(x,y)^{\frac{\alpha_{\rho}}{\log a}}\big|\log d_a(x,y)\big|^m\end{align} for $C:=\mathcal{Q}^{\frac{\alpha_{\rho}}{\log a}}(1+\log \mathcal{Q})^mC_{\rho}'L^m e^{-\alpha_{\rho}C_0}r^{\alpha_{\rho}}$.  \end{proof}

\begin{proof}[Proof of Theorem \ref{exponent}.] Let $V=\langle \xi_{\rho}^1(\partial_{\infty}\Gamma)\rangle$. Note that the restriction $\rho|_{V}:\Gamma \rightarrow \mathsf{GL}_d(\mathbb{K})$ is $1$-Anosov with limit map $\xi_{\rho}^1$ and $\ell_1(\rho(\gamma))=\ell_1(\rho|_{V}(\gamma))$ for every $\gamma \in \Gamma$. Thus, we may clearly assume that $\xi_{\rho}^1$ is spanning, i.e. $V=\mathbb{K}^d$.

We split the proof of the theorem in two parts. We first prove that $\xi_{\rho}^1$ is H\"older continuous and $\alpha_{\xi_{\rho}^1}(d_a,d_{\mathbb{P}})=\frac{\alpha_{\rho}}{\log a}$. For this, let $0<\epsilon<d^{-12}\alpha_{\rho}$. By Corollary \ref{log-bound}, fix $C>1$ and $m\in \mathbb{N}$ such that (\ref{logbound-1}) holds for every pair of distinct points $x,y\in \partial_{\infty}X$. In particular, since $d_a$ satisfies (\ref{visualineq}) for every pair of distinct points $x,y\in \partial_{\infty}X$ we have $$\big|\log d_a(x,y)\big|\leq \log r+\log\frac{r}{d_a(x,y)}\leq e^{-1} \epsilon^{-1}r^{2\theta} d_a(x,y)^{-\epsilon}$$ hence $$d_{\mathbb{P}}\big(\xi_{\rho}^1(x),\xi_{\rho}^1(y)\big)\leq C d_a(x,y)^{\frac{\alpha_{\rho}}{\log a}}\big|\log d_a(x,y)\big|^m\leq C \epsilon^{-m} e^{-m} r^{2m\epsilon}d_{a}(x,y)^{\frac{\alpha_{\rho}}{\log a}-m\epsilon}.$$ It follows that $\xi_{\rho}^1$ is $(\frac{\alpha_{\rho}}{\log a}-m\epsilon)$-H\"older. By letting $\epsilon>0$ arbitrarily close to zero, we conclude that $\alpha_{\xi_{\rho}^1}(d_a,d_{\mathbb{P}})\geq \frac{\alpha_{\rho}}{\log a}$.

\par On the other hand, by applying Lemma \ref{doublebound} for the infinite sequence $(\gamma^n)_{n\in \mathbb{N}}$, $\gamma \in \Gamma_{\infty}$, gives the bound $\alpha_{\xi_{\rho}^{1}}(d_a,d_{\mathbb{P}})\leqslant \frac{\alpha_{\rho}}{\log a}.$ This concludes the proof of the first part.
\medskip

Now we prove the second part of the theorem, that if $\rho$ is either irreducible or $2$-Anosov then $\xi_{\rho}^1$ is $\frac{\alpha_{\rho}}{\log a}$-H\"older. Since $\rho$ is either semisimple or $2$-Anosov, by Proposition \ref{12finitesubset}, there exists a finite subset $F\subset \Gamma$ and $D>0$ with the property that for every $\gamma \in \Gamma$ there exists $f \in F$: \begin{equation} \label{exp-eq1} \big||\gamma|_{X}-|\gamma f|_{X,\infty}\big|\leq D,\ \Bigg|\log \frac{\sigma_1(\rho(\gamma))}{\sigma_2(\rho(\gamma))}-\log \frac{\ell_1(\rho(\gamma f))}{\ell_2(\rho(\gamma f))}\Bigg|\leq D.\end{equation} 

\par By Lemma \ref{mainlemma} (i) and Lemma \ref{triple}, we may choose $z,w_0\in \partial_{\infty}X$ distinct and $C_{\rho},C_0, \varepsilon>0$ with the property that if $x,y\in B_{\varepsilon}(w_0)$ and $\gamma^{-1}(x,y,z) \in \mathcal{F}$, then $$\big|(x\cdot y)_{x_0}-|\gamma|_{X}\big|\leq C_0, \ d_{\mathbb{P}}(\xi_{\rho}^1(x),\xi_{\rho}^1(y))\leq C_{\rho}\frac{\sigma_2(\rho(\gamma))}{\sigma_1(\rho(\gamma))}.$$

Let $x,y\in B_{\varepsilon}(w_0)$ and $\gamma \in \Gamma$ such that $\gamma^{-1}(x,y,z)\in \mathcal{F}$ and choose $f\in F$ such that $f,\gamma\in \Gamma$ satisfy (\ref{exp-eq1}). Then we successively obtain the bounds \begin{align*}\frac{d_{\mathbb{P}}\big(\xi_{\rho}^1(x),\xi_{\rho}^1(y)\big)}{d_a(x,y)^{\alpha_{\rho}}}&\leq \frac{C_{\rho}}{d_a(x,y)^{\alpha_{\rho}}}\frac{\sigma_2(\rho(\gamma))}{\sigma_1(\rho(\gamma))} \leq \frac{C_{\rho}e^D}{d_a(x,y)^{\alpha_{\rho}}}\frac{\ell_2(\rho(\gamma f))}{\ell_1(\rho(\gamma f))}\\ &\leq r^{\alpha_{\rho}}C_{\rho}e^D a^{\alpha_{\rho}(x\cdot y)_{x_0}}\frac{\ell_2(\rho(\gamma f))}{\ell_1(\rho(\gamma f))}\\ &\leq r^{\alpha_{\rho}}C_{\rho}e^D a^{C_0\alpha_{\rho}}a^{\alpha_{\rho}|\gamma|_{X}}\frac{\ell_2(\rho(\gamma f))}{\ell_1(\rho(\gamma f))}\\ &\leq r^{\alpha_{\rho}}C_{\rho}e^D a^{(C_0+D)\alpha_{\rho}}a^{\alpha_{\rho}|\gamma f|_{X,\infty}} \frac{\ell_2(\rho(\gamma f))}{\ell_1(\rho(\gamma f))}\\ & \leq  r^{\alpha_{\rho}}C_{\rho}e^D a^{(C_0+D)\alpha_{\rho}}.\end{align*}  

We conclude that $\xi_{\rho}^1$ is $\frac{\alpha_{\rho}}{\log a}$-H\"older restricted on the open ball $B_{\varepsilon}(w_0)$. As previously, we can choose infinite order elements $h_1,h_2,h_3\in \Gamma$ such that for every $x,y\in \partial_{\infty}X \smallsetminus B_{\varepsilon}(w_0)$, there exists $i\in \{1,2,3\}$ with $h_i x,h_i y\in B_{\varepsilon}(w_0)$. Since $\rho(h_i)$ (resp. $h_i$) is bi-Lipschitz with respect to the metric $d_{\mathbb{P}}$ (resp. $d_a$), we conclude that \hbox{$\xi_{\rho}^1$ is $\frac{\alpha_{\rho}}{\log a}$-H\"older.} \end{proof} 

Recall that for a linear real semisimple Lie group $G$, $\lambda:G\rightarrow \overline{\mathfrak{a}}^{+}$ denotes the {\em Lyapunov projection} defined as follows $\lambda(g)=\lim_n \frac{\mu(g^n)}{n}$, $g\in G.$

\begin{proof}[Proof of Theorem \ref{Exp-general}.] By \cite[Prop. 3.5]{GGKW}, there exists an irreducible representation $\tau:G \rightarrow \mathsf{GL}_d(\mathbb{R})$ such that $\tau \circ \rho:\Gamma \rightarrow \mathsf{GL}_d(\mathbb{R})$ is irreducible and $1$-Anosov. By the definition of the metrics $d_{\theta^{+}}$ and $d_{\theta^{-}}$ (see the discussion in subsection \ref{generalcase}) we have \begin{align*} \alpha_{\xi_{\rho}^{+}}(d_a,d_{\theta^{+}})=\alpha_{\xi_{\tau \circ \rho}^1}(d_a,d_{\mathbb{P}}),\ \alpha_{\xi_{\rho}^{-}}(d_a,d_{\theta^{-}})=\alpha_{\xi_{\tau \circ \rho}^{\ast}}(d_a,d_{\mathbb{P}})\end{align*} where $\xi_{\tau \circ \rho}^{\ast}:\partial_{\infty}X\rightarrow \mathbb{P}(\mathbb{R}^d)$ is the Anosov limit map of the dual representation $(\tau \circ \rho)^{\ast}:\Gamma \rightarrow \mathsf{GL}_d(\mathbb{R})$. Moreover, by \cite[Lem. 3.7]{GGKW}, we have the following relation for the Lyapunov projection $$\min_{\varphi \in \theta}\varphi(\lambda(\rho(\gamma)))=\log \frac{\ell_1}{\ell_2}\big(\tau(\rho(\gamma))\big) \ \ \forall \gamma \in \Gamma,$$ and the conclusion follows by Theorem \ref{exponent}.\end{proof}

Given a metric space $(Y,d_Y)$ denote by $\textup{dim}(Y,d_Y)$ its Hausdorff dimension and by $\textup{dim}_{\textup{top}}(Y)$ its topological dimension. Let $F_r$, $r\geq 2$, be the free group on $r$ generators. For a $1$-Anosov representation $\psi$ of $F_r$ the H$\ddot{\textup{o}}$lder exponent $\alpha_{\xi_{\psi}^{1}}(d_{a},d_{\mathbb{P}})$ can be arbitrarily large. To see this, let $\{a_1,\ldots,a_r\}$ be a  free generating subset of $F_r$ and fix $\psi:F_r\rightarrow \mathsf{SL}_2(\mathbb{R})$ a $1$-Anosov representation. The sequence of representations $\{\psi_n:F_r\rightarrow \mathsf{SL}_2(\mathbb{R})\}_{n\in \mathbb{N}}$, where $\psi_n(a_i)=\psi(a_i)^n$ for every $1\leq i \leq r$, clearly satisfies $\lim_n \alpha_{\xi_{\psi_n}^1}(d_a,d_{\mathbb{P}})=+\infty$.

\par Now suppose that $\Gamma$ is a hyperbolic group which is not virtually free and $\rho:\Gamma \rightarrow \mathsf{GL}_d(\mathbb{K})$ is a $1$-Anosov representation. As $\Gamma$ (and $\Gamma/\textup{ker}\rho$) are not virtually free, it follows by \cite{Stallings-free} and \cite{Bestvina-Mess} that $\textup{dim}_{\textup{top}}(\partial_{\infty}\Gamma)>0$. By the definition of Hausdorff dimension, then one verifies the upper bound $$\alpha_{\xi_{\rho}^{1}}(d_{a},d_{\mathbb{P}}) \textup{dim} (\xi_{\rho}^{1}(\partial_{\infty}X),d_{\mathbb{P}}) \leqslant  \textup{dim}(\partial_{\infty}X,d_{a}).$$ Recall that $\Gamma$ acts geometrically on the hyperbolic space $(X,d_X)$. The visual boundary $\partial_{\infty}X$, equipped with the metric $d_a$, satisfies $\textup{dim}(\partial_{\infty}X,d_a)=\frac{h_{\Gamma,X}}{\log a}$, where $$h_{\Gamma,X}=\lim_{R\rightarrow \infty}\frac{1}{R}\log \big|\big\{\gamma \in \Gamma:|\gamma|_X\leq R\big\}\big|$$ see \cite[Thm. 7.7]{Coornaert}. Since $\xi_{\rho}^{1}$ is a homeomorphism, $\textup{dim}(\xi_{\rho}^{1}(\partial_{\infty}X),d_{\mathbb{P}})\geq \textup{dim}_{\textup{top}}(\partial_{\infty}X)$ (e.g. see \cite[Thm. 6.3.11]{Edgar}) and hence $$\sup \big\{ \alpha_{\xi_{\rho}^1}(d_a,d_{\mathbb{P}}):\rho \ \textup{is} \ \textup{1-Anosov}\big\}\leq \frac{1}{\log a}\frac{h_{\Gamma,X}}{\textup{dim}_{\textup{top}}(\partial_{\infty}X)}.$$ In particular, as a corollary of Theorem \ref{exponent} and the previous remarks we obtain: 

\begin{corollary} Suppose that $\rho:\Gamma \rightarrow \mathsf{GL}_d(\mathbb{K})$ is a $1$-Anosov representation whose limit map in $\mathbb{P}(\mathbb{K}^d)$ is spanning. For every $\epsilon>0$ there exists an infinite order element $\gamma \in \Gamma$ such that $$\log\frac{\ell_1(\rho(\gamma))}{\ell_2(\rho(\gamma))}\leq \frac{ (1+\epsilon)h_{\Gamma,X}}{\textup{dim}_{\textup{top}}(\partial_{\infty}X)}|\gamma|_{X,\infty}.$$\end{corollary}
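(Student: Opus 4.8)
The plan is to combine the exact formula of Theorem~\ref{exponent} with the metric upper bound for $\alpha_{\xi_{\rho}^{1}}(d_{a},d_{\mathbb{P}})$ recorded in the discussion immediately preceding the statement. First I would dispose of the degenerate case: if $\Gamma$ is virtually free, then $\partial_{\infty}X$ is a Cantor set, so $\textup{dim}_{\textup{top}}(\partial_{\infty}X)=0$, the right-hand side of the claimed inequality is to be read as $+\infty$, and any infinite order $\gamma\in\Gamma$ works; so assume henceforth that $\Gamma$ is not virtually free, whence $\textup{dim}_{\textup{top}}(\partial_{\infty}X)\geq 1$.

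Since $\xi_{\rho}^1$ is spanning, Theorem~\ref{exponent} applies and gives
$$\alpha_{\xi_{\rho}^{1}}(d_{a},d_{\mathbb{P}})=\frac{1}{\log a}\inf_{\gamma \in \Gamma_{\infty}}\frac{\log \frac{\ell_1}{\ell_2}(\rho(\gamma))}{|\gamma|_{X,\infty}},$$
while the Hausdorff-dimension argument of the discussion above (namely $\alpha_{\xi_{\rho}^{1}}(d_{a},d_{\mathbb{P}})\,\textup{dim}(\xi_{\rho}^1(\partial_{\infty}X),d_{\mathbb{P}})\leq\textup{dim}(\partial_{\infty}X,d_a)$, together with $\textup{dim}(\xi_{\rho}^1(\partial_{\infty}X),d_{\mathbb{P}})\geq\textup{dim}_{\textup{top}}(\partial_{\infty}X)$ from \cite[Thm. 6.3.11]{Edgar} and $\textup{dim}(\partial_{\infty}X,d_a)=h_{\Gamma,X}/\log a$ from \cite[Thm. 7.7]{Coornaert}) yields
$$\alpha_{\xi_{\rho}^{1}}(d_{a},d_{\mathbb{P}})\leq \frac{1}{\log a}\,\frac{h_{\Gamma,X}}{\textup{dim}_{\textup{top}}(\partial_{\infty}X)}.$$
Here $\xi_{\rho}^1$ is a homeomorphism onto its image because a continuous injection of the compact space $\partial_{\infty}X$ into a Hausdorff space is one; and it is injective since the Anosov limit maps $\xi_{\rho}^1,\xi_{\rho}^{d-1}$ are compatible and transverse, so $\xi_{\rho}^1(x)=\xi_{\rho}^1(y)$ with $x\neq y$ would force $\xi_{\rho}^1(x)\subset\xi_{\rho}^{d-1}(y)$, contradicting $\mathbb{K}^d=\xi_{\rho}^1(x)\oplus\xi_{\rho}^{d-1}(y)$.

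Comparing the two displays and cancelling the common factor $\tfrac{1}{\log a}$ gives $\inf_{\gamma \in \Gamma_{\infty}}\frac{\log \frac{\ell_1}{\ell_2}(\rho(\gamma))}{|\gamma|_{X,\infty}}\leq \frac{h_{\Gamma,X}}{\textup{dim}_{\textup{top}}(\partial_{\infty}X)}$. Since $\Gamma$ is non-elementary we have $h_{\Gamma,X}>0$, so for any $\epsilon>0$ the number $(1+\epsilon)h_{\Gamma,X}/\textup{dim}_{\textup{top}}(\partial_{\infty}X)$ strictly exceeds this infimum; hence by the definition of an infimum there is an infinite order $\gamma\in\Gamma$ with $\frac{\log \frac{\ell_1}{\ell_2}(\rho(\gamma))}{|\gamma|_{X,\infty}}<(1+\epsilon)\frac{h_{\Gamma,X}}{\textup{dim}_{\textup{top}}(\partial_{\infty}X)}$, and multiplying through by $|\gamma|_{X,\infty}>0$ yields the assertion. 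There is no real obstacle in this argument: the entire content is the equality of Theorem~\ref{exponent} (where all the work lies) matched against a soft general dimension bound, and the only care needed is to keep $\textup{dim}_{\textup{top}}(\partial_{\infty}X)\geq 1$ in force so that the relevant quotients are defined.
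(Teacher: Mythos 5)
Your proposal is correct and is essentially the paper's own argument: the statement is obtained exactly by matching the eigenvalue formula of Theorem \ref{exponent} for $\alpha_{\xi_{\rho}^{1}}(d_a,d_{\mathbb{P}})$ against the soft bound $\alpha_{\xi_{\rho}^{1}}(d_a,d_{\mathbb{P}})\leq \frac{1}{\log a}\frac{h_{\Gamma,X}}{\textup{dim}_{\textup{top}}(\partial_{\infty}X)}$ coming from the Hausdorff/topological dimension discussion preceding the corollary. Your extra care about the virtually free case (where $\textup{dim}_{\textup{top}}(\partial_{\infty}X)=0$) and the verification that $\xi_{\rho}^1$ is a homeomorphism onto its image only make explicit what the paper leaves implicit.
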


\section{H\:older exponent of the inverse limit map} \label{inverse}

In this section, we prove Theorem \ref{exponent-inverse2} that the inverse of an injective limit map is H\"older continuous as well as the formulas claimed in Theorem \ref{exponent2} and Theorem \ref{conjugation}.

Before we give the proofs, we shall make some conventions that will be useful. Given a model space $(X,d_x)$ for a hyperbolic group $\Gamma$, fix $w_0\in \partial_{\infty}X$ and $z\neq w_0$. Let also $C,\varepsilon>0$, depending only on $\Gamma$ and $z\in \partial_{\infty}X$, satisfying the conclusion of Lemma \ref{triple}: if $x,y\in B_{\varepsilon}(w_0)$, $x\neq y$, and $\gamma \in \Gamma$ such that $\gamma^{-1}(x,y,z)\in \mathcal{F}$, \begin{align} \label{ineq-Grproduct-wordlength} |\gamma|_X-C_0\leq (x\cdot y)_{x_0}\leq |\gamma|_{X}+C_0.\end{align}

\begin{proof}[Proof of Theorem \ref{exponent-inverse2}]  Let $\rho:\Gamma \rightarrow \mathsf{GL}_d(\mathbb{K})$ be a representation which admits a continuous, $\rho$-equivariant, injective map $\xi_{\rho}:\partial_{\infty}X \rightarrow \mathbb{P}(\mathbb{K}^d)$. We prove that there exist $\beta,\kappa>0$ such that \begin{align}\label{betaconv} d_{\mathbb{P}}(\xi_{\rho}(x),\xi_{\rho}(y))\geq \kappa d_a(x,y)^{\beta} \ \ \forall \  x,y\in \partial_{\infty}X.\end{align}

\par By the minimality of the action of $\Gamma$ on $\partial_{\infty}X$, it suffices to prove that (\ref{betaconv}) holds into any open neighbourhood $V\subset \partial_{\infty}X$ of $w_0\in \partial_{\infty}X$. Since $\Gamma$ is finitely generated, by the sub-multiplicativity of the first singular value, we may choose a constants $C,c>1$ with $$ \frac{\sigma_1(\rho(\gamma))^2}{\sigma_d(\rho(\gamma))\sigma_{d-1}(\rho(\gamma))}\leq C e^{c |\gamma|_{X}} \ \ \forall \ \gamma \in \Gamma.$$ If $x,y\in B_{\varepsilon}(w_0)$ and $(x,y,z)=\gamma(b_1,b_2,b_3)\in \gamma \mathcal{F}$, by Lemma \ref{mainlemma2'} and (\ref{ineq-Grproduct-wordlength}), we obtain the bounds: \begin{align*} d_{\mathbb{P}}\big(\xi_{\rho}(x),\xi_{\rho}(y)\big)&\geq \frac{2}{\pi}\frac{\sigma_d(\rho(\gamma))\sigma_{d-1}(\rho(\gamma))}{\sigma_1(\rho(\gamma))^2}d_{\mathbb{P}}\big(\xi_{\rho}(b_1),\xi_{\rho}(b_2)\big)\\  &\geq \frac{2}{\pi C}e^{-c|\gamma|_{X}}\min_{(x_1,x_2,x_3)\in \mathcal{F}}d_{\mathbb{P}}\big(\xi_{\rho}(x_1),\xi_{\rho}(x_2)\big)\\ &\geq \frac{2}{\pi C}e^{-c(x\cdot y)_{x_0}}e^{-c C_0}\min_{(x_1,x_2,x_3)\in \mathcal{F}}d_{\mathbb{P}}\big(\xi_{\rho}(x_1),\xi_{\rho}(x_2)\big)\\ & \geq \Bigg(\frac{2}{\pi C} r^{-\frac{c}{\log a}} e^{-c C_0}\min_{(x_1,x_2,x_3)\in \mathcal{F}}d_{\mathbb{P}}\big(\xi_{\rho}(x_1),\xi_{\rho}(x_2)\big)\Bigg) d_a(x,y)^{\frac{c}{\log a}}.\end{align*}  This shows that inverse of $\xi_{\rho}$ is H\"older continuous and concludes the proof of the theorem.
\end{proof}

\begin{proof}[Proof of Theorem \ref{exponent2}.]  Let $\rho$ be a $(1,1,2)$-hyperconvex whose Anosov limit map $\xi_{\rho}^1$ is spanning. Let us define \begin{align*} \beta(d_a,d_{\mathbb{P}})&:=\inf \big \{\beta>0 \ \big| \ \exists \ \kappa>0: d_{\mathbb{P}}(\xi_{\rho}^{1}(x),\xi_{\rho}^{1}(y)) \geqslant  \kappa d_{a}(x,y)^{\beta} \ \forall \ x,y \in \partial_{\infty} X \big\} \end{align*} and recall that we set $\beta_{\rho}=\sup_{\gamma \in \Gamma_{\infty}}\frac{\log\frac{\ell_1}{\ell_2}(\rho(\gamma))}{|\gamma|_{X,\infty}}$. 

\par Note that $\beta(d_a,d_{\mathbb{P}})>0$ and the H\"older exponent of the inverse of $\xi^{1}_{\rho}$ is equal to $\beta(d_a,d_{\mathbb{P}})^{-1}$. Since $\xi_{\rho}^1$ is spanning, by using Lemma \ref{doublebound} it is immediate that $\beta(d_a,d_{\mathbb{P}}) \geq \frac{\beta_{\rho}}{\log a}$. We will prove that there exists $\kappa>0$: $$d_{\mathbb{P}}\big(\xi_{\rho}^{1}(x),\xi_{\rho}^{1}(y) \big) \geqslant  \kappa d_{a}(x,y)^{\frac{\beta_{\rho}}{\log a}} \ \ \forall x,y \in \partial_{\infty} X.$$ This is enough to conclude $\beta(d_a,d_{\mathbb{P}})=\frac{\beta_{\rho}}{\log a}$ and that $\beta(d_a,d_{\mathbb{P}})$ is attained.

Let $x,y\in B_{\varepsilon}(w_0)$ with $x\neq y$ and choose $\gamma \in \Gamma$ such that $\gamma^{-1}(x,y,z)\in \mathcal{F}$ and \hbox{$|\gamma|_X \leq (x\cdot y)_{x_0}+C_0$.} By using the lower bound of Theorem \ref{semisimple-bound} and Lemma \ref{mainlemma} (ii) there exist $L_{\rho}>1$ and $c_{\rho}>0$, depending only on $\rho$ and $\Gamma$, such that: \begin{align*}d_{\mathbb{P}}\big(\xi_{\rho}^1(x),\xi_{\rho}^1(y) \big) &\geq c_{\rho} \frac{\sigma_2(\rho(\gamma))}{\sigma_1(\rho(\gamma))}\\ &\geq c_{\rho} L_{\rho}^{-1} e^{-\beta_{\rho}|\gamma|_{X}}\\ &\geq c_{\rho} L_{\rho}^{-1} e^{-\beta_{\rho}C_0}e^{-\beta_{\rho}(x\cdot y)_{x_0}}\\ &\geq \big(c_{\rho} L_{\rho}^{-1}e^{-\beta_{\rho}C_0}r^{-\frac{\beta_{\rho}}{\log a}}\big) d_a(x,y)^{\frac{\beta_{\rho}}{\log a}}. \end{align*} Therefore, $\beta(d_a,d_{\mathbb{P}})=\frac{\beta_{\rho}}{\log a}$ and the inverse $\eta_{\rho}^1:(\xi_{\rho}^1(\partial_{\infty}X),d_{\mathbb{P}})\rightarrow (\partial_{\infty}X,d_a)$ of the limit map $\xi_{\rho}^1$ is $\frac{\log a}{\beta_{\rho}}$-H\"older. \end{proof}

\begin{proof}[Proof of Theorem \ref{conjugation}.] Let us observe that the H\"older exponent of $\xi_{\rho_2}^{1}\circ (\xi_{\rho_1}^{1})^{-1}$ \hbox{is equal to} $$a_{\rho_1,\rho_2}:= \sup \big \{ \beta>0 \ |\ \exists \ \kappa>0: d_{\mathbb{P}}(\xi_{\rho_2}^{1}(x),\xi_{\rho_2}^{1}(y)) \leqslant \kappa d_{\mathbb{P}}(\xi_{\rho_1}^{1}(x),\xi_{\rho_1}^{1}(y))^{\beta} \ \forall x,y \in \partial_{\infty} X \big\}.$$ Note that $a_{\rho_1,\rho_2}>0$ since the limit maps $\xi_{\rho_1}^1$ and $\xi_{\rho_2}^1$ are bi-H\"older continuous. By Corollary \ref{equality-exp-1}, since $\rho_1$ is $\{1,2\}$-Anosov, we have the equality $$\alpha_{\rho_1,\rho_2}:=\inf_{\gamma \in \Gamma_{\infty}} \frac{\log \frac{\ell_1}{\ell_2}(\rho_2(\gamma))}{\log \frac{\ell_1}{\ell_2}(\rho_1(\gamma))}=\sup_{n \geq 1} \inf_{|\gamma|_{X}\geq n} \frac{\log \frac{\sigma_1}{\sigma_2}(\rho_2(\gamma))}{\log \frac{\sigma_1}{\sigma_2}(\rho_1(\gamma))}.$$ 

We prove the lower bound $a_{\rho_1,\rho_2}\geq \alpha_{\rho_1,\rho_2}$. For this, let $\epsilon>0$. By the definition of $a_{\rho_1,\rho_2}>0$, we may choose infinite sequences $(z_n)_{n \in \mathbb{N}}$ and $(w_n)_{n \in \mathbb{N}}$ in $\partial_{\infty}X$ such that $z_n \neq w_n$ for every $n$, $\lim_n z_n=\lim_n w_n=x'$ and \begin{equation} \label{conjugation-eq0} d_{\mathbb{P}}\big(\xi_{\rho_2}^{1}(z_n),\xi_{\rho_2}^{1}(w_n) \big) \geqslant d_{\mathbb{P}}\big(\xi_{\rho_1}^{1}(z_n),\xi_{\rho_1}^{1}(w_n) \big)^{a_{\rho_1,\rho_2}+\epsilon}\end{equation} for sufficiently large $n$. 

Let us fix $z'\in \partial_{\infty}X \smallsetminus \{x'\}$.  Since $\rho_1$ is $(1,1,2)$-hyperconvex and $\rho_2$ is $1$-Anosov, by Lemma \ref{mainlemma} (i) and (ii) there exist $\varepsilon>0$ and $c_{\rho_1},C_{\rho_2}>0$ depending on $\rho_1$ and $\rho_2$ respectively, with the property that if $x,y\in B_{\varepsilon}(x')$ and $\delta \in \Gamma$ is an element with $\delta^{-1}(x,y,z')\in \mathcal{F}$, then: \begin{align} \label{conjugation-eq1}d_{\mathbb{P}}\big(\xi_{\rho_1}^{1}(x),\xi_{\rho_1}^{1}(y)\big)  &\geq c_{\rho_1} \frac{\sigma_2(\rho_1(\delta))}{\sigma_1(\rho_1(\delta))} \\
\label{conjugation-eq2} d_{\mathbb{P}}\big(\xi_{\rho_2}^{1}(x),\xi_{\rho_2}^{1}(y)\big) &\leq C_{\rho_2} \frac{\sigma_2(\rho_2(\delta))}{\sigma_1(\rho_2(\delta))}. \end{align} Now for sufficiently large $n$, let $\delta_n\in \Gamma$ with $\delta_{n}^{-1}(z_n,w_n,z')\in \mathcal{F}$. It follows by (\ref{conjugation-eq0}), (\ref{conjugation-eq1}) and (\ref{conjugation-eq2}) $$a_{\rho_1,\rho_2}+\epsilon \geq \varliminf_{n\rightarrow \infty} \frac{\log \frac{\sigma_1}{\sigma_2}(\rho_2(\delta_n))}{\log \frac{\sigma_1}{\sigma_2}(\rho_1(\delta_n))} \geq \alpha_{\rho_1,\rho_2}.$$ Since $\epsilon>0$ was arbitrary we conclude that $a_{\rho_1,\rho_2}\geq \alpha_{\rho_1,\rho_2}$.

\par Now we prove the upper bound $a_{\rho_1,\rho_2}\leq \alpha_{\rho_1,\rho_2}$. Fix $\beta>0$ so that there exists $c>0$ with \begin{equation} \label{conjugation-eq0''} d_{\mathbb{P}}(\xi_{\rho_2}^{1}(x),\xi_{\rho_2}^{1}(y)) \leqslant c  d_{\mathbb{P}}(\xi_{\rho_1}^{1}(x),\xi_{\rho_1}^{1}(y))^{\beta} \ \ \forall x,y\in \partial_{\infty}X.\end{equation} Let $(\gamma_n')_{n \in \mathbb{N}}$ be an infinite sequence in $\Gamma$. By using the fact that the Anosov limit maps of $\rho_1$ and $\rho_2$ are spanning and working as in Lemma \ref{doublebound}, we may choose $x_1,x_2 \in \partial_{\infty}\Gamma$ such that for $i=1,2,$ $$\lim_{n\rightarrow \infty}\frac{\log d_{\mathbb{P}}\big(\xi_{\rho_i}^1(\gamma_n' x_1),\xi_{\rho_i}^{1}(\gamma_n' x_2)\big)}{ \log \frac{\sigma_2}{\sigma_1}(\rho_i(\gamma_n'))}=1.$$ It follows by (\ref{conjugation-eq0''}) that \begin{equation} \label{conjugation-eq3} \beta \leq \lim_{n \rightarrow \infty} \frac{\log d_{\mathbb{P}}\big(\xi_{\rho_2}^1(\gamma_n'x_1),\xi_{\rho_2}^{1}(\gamma_n' x_2)\big)}{\log d_{\mathbb{P}}\big(\xi_{\rho_1}^1(\gamma_n'x_1),\xi_{\rho_1}^{1}(\gamma_n' x_2)\big)}=\lim_{n \rightarrow \infty}\frac{\log \frac{\sigma_1}{\sigma_2}(\rho_2(\gamma_n'))}{\log \frac{\sigma_1}{\sigma_2}(\rho_1(\gamma_n'))}.\end{equation} In particular, $\beta \leq \alpha_{\rho_1,\rho_2}$ and hence $a_{\rho_1,\rho_2}\leq \alpha_{\rho_1,\rho_2}$. Finally, we conclude that $a_{\rho_1,\rho_2}=\alpha_{\rho_1,\rho_2}$. 

It remains to show that if $\rho_2$ is either irreducible or $2$-Anosov, then $\alpha:=\alpha_{\rho_1,\rho_2}$ is attained. By Proposition \ref{12finitesubset}, there exists a finite set $F\subset \Gamma$ and $L>0$ with the property that for every $\gamma \in \Gamma$ there exists $f \in F$ with \begin{equation} \label{exp-eq2} \max_{i=1,2} \Bigg|\log \frac{\sigma_1(\rho_i(\gamma))}{\sigma_2(\rho_i(\gamma))}-\log \frac{\ell_1(\rho_i(\gamma f))}{\ell_2(\rho_i(\gamma f))}\Bigg|\leq L. \end{equation}

Let $x,y\in B_{\varepsilon}(x')$ and $\gamma \in \Gamma$ such that $\gamma^{-1}(x,y,z')=(b_1,b_2,b_3)\in \mathcal{F}$. We may choose $f\in F$ so that $\gamma,f\in \Gamma$ satisfy (\ref{exp-eq2}) and by using (\ref{conjugation-eq1}) and (\ref{conjugation-eq2}) we obtain the estimates: \begin{align*} d_{\mathbb{P}}\big(\xi_{\rho_2}^1(x),\xi_{\rho_2}^1(y) \big)&=d_{\mathbb{P}}\big(\rho_2(\gamma)\xi_{\rho_2}^1(b_1),\rho_2(\gamma)\xi_{\rho_2}^1(b_2) \big)\\ &\leq C_{\rho_2}\frac{\sigma_2(\rho_2(\gamma))}{\sigma_1(\rho_2(\gamma))}\leq C_{\rho_2}e^{L}\frac{\ell_2(\rho_2(\gamma f))}{\ell_1(\rho_2(\gamma f))}\\ &\leq C_{\rho_2}e^L  \frac{\ell_2(\rho_1(\gamma f))^{\alpha}}{\ell_1(\rho_1(\gamma f))^{\alpha}}\leq C_{\rho_2}e^{L(1+\alpha)} \frac{\sigma_2(\rho_1(\gamma))^{\alpha}}{\sigma_1(\rho_1(\gamma))^{\alpha}}\\ &\leq C_{\rho_2}c_{\rho_1}^{-\alpha} e^{L(1+\alpha)}d_{\mathbb{P}}\big(\xi_{\rho_1}^1(x),\xi_{\rho_1}^1(y) \big)^{\alpha}.\end{align*} This shows that the map $\xi_{\rho_1}^{1}\circ (\xi_{\rho_2}^1)^{-1}$ is $\alpha_{\rho_1,\rho_2}$-H\"older restricted on the open subset $\xi_{\rho_2}^1(B_{\varepsilon}(x'))$ of $\xi_{\rho_2}^1(\partial_{\infty}X)$. In particular, since $\rho_2(\Gamma)$ acts minimally on $\xi_{\rho_2}^1(\partial_{\infty}X)$ we conclude that $\xi_{\rho_1}^{1}\circ (\xi_{\rho_2}^1)^{-1}$ is is $\alpha_{\rho_1,\rho_2}$-H\"older on $\xi_{\rho_2}^1(\partial_{\infty}X)$.\end{proof}

Let $\textup{Hit}_d(\Sigma_g)$ be the Hitchin component of $\pi_1(\Sigma_g)$ into $\mathsf{PSL}_d(\mathbb{R})$. For every $r=1,\ldots,d-1$, Carvajales--Dai--Pozzetti--Wienhard in \cite{CDPW} defined an asymmetric distance $d_{\textup{Th}}^r: \textup{Hit}_d(\Sigma_g) \times \textup{Hit}_d(\Sigma_g)\rightarrow \mathbb{R}$, generalizing Thurston's asymmetric metric on Teichm\"uller space of $\Sigma_g$. The calculation of Theorem \ref{conjugation}, combined with the fact that exterior powers of Hitchin representations are $(1,1,2)$-hyperconvex \cite[Prop. 9.6]{PSW} and Theorem 1.3 of \cite{CDPW}, imply the following corollary.

\begin{corollary} Let $\rho_1,\rho_2\in \textup{Hit}_d(\Sigma_g)$ and an integer $2\leq r\leq d-1$.\\
\noindent \textup{(i)} The map $\xi_{\rho_2}^1\circ (\xi_{\rho_1}^{1})^{-1}:\xi_{\rho_1}^1(\partial_{\infty}\pi_1(\Sigma_g))\rightarrow \xi_{\rho_2}^1(\partial_{\infty}\pi_1(\Sigma_g))$ has H\"older exponent equal to $1$ if and only if $\rho_1$ and $\rho_2$ are conjugate.\\
\noindent \textup{(ii)} Suppose that $\rho_1$ and $\rho_2$ are Zariski dense and set $\psi_j:=\wedge^r\rho_j:\pi_1(\Sigma_g)\rightarrow \mathsf{PSL}(\wedge^r \mathbb{R}^d)$ for $j=1,2$. The map $\xi_{\psi_2}^1\circ (\xi_{\psi_1}^{1})^{-1}:\xi_{\psi_1}^1(\partial_{\infty}\pi_1(\Sigma_g))\rightarrow \xi_{\psi_2}^1(\partial_{\infty}\pi_1(\Sigma_g))$ has H\"older exponent equal to $1$ if and only if $\rho_1$ and $\rho_2$ are conjugate.\end{corollary}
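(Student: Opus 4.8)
The plan is to combine Theorem~\ref{conjugation} with the non-negativity and rigidity of the generalized Thurston asymmetric metrics $d_{\textup{Th}}^r$ of \cite{CDPW}: once the relevant H\"older exponent is identified with $e^{-d_{\textup{Th}}^r}$, each statement reduces to a short logical argument. For part~(i), I would first note that every $\rho\in\textup{Hit}_d(\Sigma_g)$ is $(1,1,2)$-hyperconvex by \cite{Labourie} (in particular $\{1,2\}$-Anosov) and irreducible, so its Frenet limit curve $\xi^1_\rho$ is spanning. Hence Theorem~\ref{conjugation} applies to the pair $(\rho_1,\rho_2)$ and shows that the H\"older exponent of $\xi_{\rho_2}^1\circ(\xi_{\rho_1}^1)^{-1}$ equals
$$\alpha_{\rho_1,\rho_2}=\inf_{\gamma\in\Gamma_\infty}\frac{\log\frac{\ell_1}{\ell_2}(\rho_2(\gamma))}{\log\frac{\ell_1}{\ell_2}(\rho_1(\gamma))}.$$
Recalling that (for $r=1$) the metric of \cite{CDPW} has the form $d_{\textup{Th}}^1(\rho,\rho')=\log\sup_{\gamma\in\Gamma_\infty}\frac{\log\frac{\ell_1}{\ell_2}(\rho'(\gamma))}{\log\frac{\ell_1}{\ell_2}(\rho(\gamma))}$ and that a supremum of reciprocals is the reciprocal of the infimum, we get $\alpha_{\rho_1,\rho_2}=e^{-d_{\textup{Th}}^1(\rho_2,\rho_1)}$. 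By \cite[Thm.~1.3]{CDPW}, $d_{\textup{Th}}^1(\rho_2,\rho_1)\geq 0$ with equality if and only if $\rho_1$ and $\rho_2$ are conjugate; thus $\alpha_{\rho_1,\rho_2}\leq 1$, and it equals $1$ precisely when $\rho_1,\rho_2$ are conjugate. (The ``if'' direction is also transparent directly: conjugate representations have identical eigenvalue data, so every ratio above is $1$.)

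For part~(ii), I would run exactly the same argument with $\psi_j=\wedge^r\rho_j$ in place of $\rho_j$. By \cite[Prop.~9.6]{PSW} each $\psi_j$ is $(1,1,2)$-hyperconvex (in particular $\{1,2\}$-Anosov), and since $\rho_j$ is Zariski dense and $\wedge^r\mathbb{R}^d$ is an irreducible $\mathsf{GL}_d(\mathbb{R})$-module, $\psi_j$ is irreducible, so $\xi^1_{\psi_j}$ is spanning; Theorem~\ref{conjugation} then gives that the exponent of $\xi_{\psi_2}^1\circ(\xi_{\psi_1}^1)^{-1}$ equals $\alpha_{\psi_1,\psi_2}=\inf_{\gamma\in\Gamma_\infty}\frac{\log\frac{\ell_1}{\ell_2}(\psi_2(\gamma))}{\log\frac{\ell_1}{\ell_2}(\psi_1(\gamma))}$. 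Since for $\gamma\in\Gamma_\infty$ the matrix $\rho_j(\gamma)$ is loxodromic with eigenvalue moduli $\ell_1(\rho_j(\gamma))>\cdots>\ell_d(\rho_j(\gamma))$, one has $\frac{\ell_1}{\ell_2}(\wedge^r\rho_j(\gamma))=\frac{\ell_r}{\ell_{r+1}}(\rho_j(\gamma))$, so $\alpha_{\psi_1,\psi_2}=\inf_{\gamma\in\Gamma_\infty}\frac{\log\frac{\ell_r}{\ell_{r+1}}(\rho_2(\gamma))}{\log\frac{\ell_r}{\ell_{r+1}}(\rho_1(\gamma))}=e^{-d_{\textup{Th}}^r(\rho_2,\rho_1)}$, and \cite[Thm.~1.3]{CDPW} again yields that this equals $1$ if and only if $\rho_1$ and $\rho_2$ are conjugate.

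The step I expect to need the most care is purely conventional: one must verify that $d_{\textup{Th}}^r$, as defined in \cite{CDPW}, is indeed built from the $r$-th simple-root lengths $\gamma\mapsto\log\frac{\ell_r}{\ell_{r+1}}(\rho(\gamma))$ (equivalently, from the top logarithmic eigenvalue gap of $\wedge^r\rho$, which is exactly the quantity entering Theorem~\ref{conjugation}), and that \cite[Thm.~1.3]{CDPW} supplies both the non-negativity of $d_{\textup{Th}}^r$ on all of $\textup{Hit}_d(\Sigma_g)\times\textup{Hit}_d(\Sigma_g)$ and the characterization of its zero set as the diagonal modulo conjugation. Granting these identifications, no geometric input is needed beyond Theorem~\ref{conjugation}, \cite[Prop.~9.6]{PSW}, and \cite[Thm.~1.3]{CDPW}.
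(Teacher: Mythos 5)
Your proposal is correct and follows essentially the same route as the paper, which proves the corollary exactly by combining Theorem \ref{conjugation} (giving the H\"older exponent as $\alpha_{\rho_1,\rho_2}$, resp. $\alpha_{\psi_1,\psi_2}$ with $\psi_j=\wedge^r\rho_j$, the latter being $(1,1,2)$-hyperconvex by \cite[Prop. 9.6]{PSW}) with the rigidity statement of Theorem 1.3 in \cite{CDPW} identifying the vanishing locus of $d_{\textup{Th}}^r$ with conjugate pairs. Your only flagged caveat, matching the exponent $\inf_{\gamma}\log\frac{\ell_r}{\ell_{r+1}}(\rho_2(\gamma))/\log\frac{\ell_r}{\ell_{r+1}}(\rho_1(\gamma))$ with the convention defining $d_{\textup{Th}}^r$, is exactly the citation-level step the paper also leaves to \cite{CDPW}, so there is no gap.
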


\subsection{The rank 1 case.} We close this section with the following proposition for the H\"older exponent of limit maps of Anosov representations into rank one Lie groups. Let $G$ be a real semisimple Lie group with $\textup{rk}_{\mathbb{R}}(G)=1$. Consider the symmetric space of $G$, $\mathsf{X}_G$ equipped with the distance $d_{\mathsf{X}_G}$ induced by the Killing metric. Fix also a visual metric $d_{b}$, $b>1$, on $\partial_{\infty}\mathsf{X}_G$ such that are $L_1,L_2>1$ with $$L_1b^{-(x\cdot y)_{x_0}}\leq d_{b}(x,y)\leq L_2 b^{-(x\cdot y)_{x_0}},\  x,y\in \partial_{\infty}\mathsf{X}_{G}.$$ 

\begin{proposition} Let $G$ be a real semisimple Lie group with $\textup{rk}_{\mathbb{R}}(G)=1$ and $\rho:\Gamma \rightarrow G$ a Zariski dense Anosov representation with limit map $\xi_{\rho}:(\partial_{\infty}X,d_a)\rightarrow (\partial_{\infty}\mathsf{X}_G,d_b)$. The exponent of $\xi_{\rho}$ and its inverse $\xi_{\rho}^{-1}$ are attained and $$\alpha_{\xi_{\rho}}(d_a,d_{b})=\frac{\log b}{\log a}c_{\rho}^{-},\ \alpha_{\xi_{\rho}^{-1}}(d_{b},d_a)=\frac{\log a}{\log b}\frac{1}{c_{\rho}^{+}}$$ where $c_{\rho}^{-}:=\underset{\gamma\in \Gamma_{\infty}}{\inf}\frac{|\rho(\gamma)|_{\mathsf{X}_G,\infty}}{|\gamma|_{X,\infty}},\ c_{\rho}^{+}:=\underset{\gamma\in \Gamma_{\infty}}{\sup}\frac{|\rho(\gamma)|_{\mathsf{X}_G,\infty}}{|\gamma|_{X,\infty}}.$\end{proposition}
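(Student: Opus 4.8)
Since $G$ has real rank one, a representation $\rho:\Gamma\to G$ is Anosov if and only if it is convex cocompact, so $\rho(\Gamma)$ acts properly discontinuously, isometrically and cocompactly on the convex hull $\mathcal{C}\subset\mathsf{X}_G$ of its limit set $\xi_{\rho}(\partial_{\infty}X)$. Thus $(\mathcal{C},d_{\mathsf{X}_G})$ is itself a $\Gamma$-model space, with $\partial_{\infty}\mathcal{C}=\xi_{\rho}(\partial_{\infty}X)$, with $|\rho(\gamma)|_{\mathcal{C}}=d_{\mathsf{X}_G}(\rho(\gamma)x_0,x_0)=|\rho(\gamma)|_{\mathsf{X}_G}$ for $x_0\in\mathcal{C}$, and --- since $\mathcal{C}$ is geodesically convex in $\mathsf{X}_G$ --- with Gromov products of points of $\partial_{\infty}\mathcal{C}$ based at $x_0$ agreeing up to a uniform additive error with those computed in $\mathsf{X}_G$. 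Hence the restriction of $d_b$ to $\partial_{\infty}\mathcal{C}$ is a visual metric of parameter $b$ for $\mathcal{C}$ in the sense of \eqref{visualineq}, and the plan is to run the arguments of Theorem \ref{exponent} and Theorem \ref{exponent2}\,(ii) with the target $\partial_{\infty}\mathsf{X}_G$ replaced by the boundary of the model space $\mathcal{C}$.

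First I would fix distinct $w_0,z\in\partial_{\infty}X$ and apply Lemma \ref{triple} twice --- for the model space $X$ and for the model space $\mathcal{C}$ --- noting that the homeomorphism $\xi_{\rho}:(\partial_{\infty}X,d_a)\to(\partial_{\infty}\mathcal{C},d_b)$ lets one shrink a $d_a$-ball about $w_0$ inside any prescribed $d_b$-ball about $\xi_{\rho}(w_0)$, and that the same compact set $\mathcal{F}$ works for both actions. This produces $C_1,\varepsilon>0$ such that, whenever $x,y\in B_{\varepsilon}(w_0)$ are distinct and $\gamma^{-1}(x,y,z)\in\mathcal{F}$, we have $\big|(x\cdot y)_{x_0}-|\gamma|_X\big|\le C_1$ and $\big|(\xi_{\rho}(x)\cdot\xi_{\rho}(y))_{x_0}-|\rho(\gamma)|_{\mathsf{X}_G}\big|\le C_1$; combining this with \eqref{visualineq} applied in $X$ and in $\mathcal{C}$ yields a constant $K>1$ with
\[K^{-1}a^{-|\gamma|_X}\le d_a(x,y)\le Ka^{-|\gamma|_X},\qquad K^{-1}b^{-|\rho(\gamma)|_{\mathsf{X}_G}}\le d_b\big(\xi_{\rho}(x),\xi_{\rho}(y)\big)\le Kb^{-|\rho(\gamma)|_{\mathsf{X}_G}}.\]
These estimates are genuinely two-sided --- in contrast with the projective estimates of Lemma \ref{mainlemma} --- precisely because the target metric $d_b$ is itself a visual metric.

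The arithmetic input linking $|\rho(\gamma)|_{\mathsf{X}_G}$ with $|\gamma|_X$ comes from Theorem \ref{AMS-KP}. Let $\tau_{\theta}:G\to\mathsf{GL}_d(\mathbb{R})$ be the proximal representation fixed in \S\ref{generalcase}; here $\theta=\Delta=\{\alpha\}$ consists of the single simple restricted root $\alpha$ of $G$, and $\tau_{\theta}\circ\rho$ is irreducible (hence semisimple) and $1$-Anosov. Since $\mathfrak{a}$ is one-dimensional, \cite[\S 3]{GGKW} gives a constant $\kappa>0$ with $\log\tfrac{\sigma_1}{\sigma_2}\big(\tau_{\theta}(g)\big)=\alpha(\mu(g))=\kappa\,d_{\mathsf{X}_G}(g x_0,x_0)$ and $\log\tfrac{\ell_1}{\ell_2}\big(\tau_{\theta}(g)\big)=\alpha(\lambda(g))=\kappa\,|g|_{\mathsf{X}_G,\infty}$ for all $g\in G$. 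Applying Theorem \ref{AMS-KP} to the semisimple representation $\tau_{\theta}\circ\rho$, and converting word lengths on $\Gamma$ into displacements on $X$ exactly as in the proof of Proposition \ref{12finitesubset} (through Lemma \ref{Gromovproduct2} and stability of geodesics), I obtain a finite subset $F\subset\Gamma$ and $C_2>0$ such that each $\gamma\in\Gamma$ admits $f\in F$ with $\big||\gamma|_X-|\gamma f|_{X,\infty}\big|\le C_2$ and $\big||\rho(\gamma)|_{\mathsf{X}_G}-|\rho(\gamma f)|_{\mathsf{X}_G,\infty}\big|\le C_2$. As $|\rho(\gamma f)|_{\mathsf{X}_G,\infty}/|\gamma f|_{X,\infty}\in[c_{\rho}^{-},c_{\rho}^{+}]$, this produces a constant $C_3>0$ with
\[c_{\rho}^{-}|\gamma|_X-C_3\ \le\ |\rho(\gamma)|_{\mathsf{X}_G}\ \le\ c_{\rho}^{+}|\gamma|_X+C_3\qquad\text{for all }\gamma\in\Gamma,\]
and, upon comparing with the sequences $(\gamma^n)_{n}$ for $\gamma\in\Gamma_{\infty}$, also the equalities $\sup_n\inf_{|\gamma|_X\ge n}\tfrac{|\rho(\gamma)|_{\mathsf{X}_G}}{|\gamma|_X}=c_{\rho}^{-}$ and $\inf_n\sup_{|\gamma|_X\ge n}\tfrac{|\rho(\gamma)|_{\mathsf{X}_G}}{|\gamma|_X}=c_{\rho}^{+}$.

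Combining the lower bound $|\rho(\gamma)|_{\mathsf{X}_G}\ge c_{\rho}^{-}|\gamma|_X-C_3$ with the two-sided estimate above yields $d_b(\xi_{\rho}(x),\xi_{\rho}(y))\le C\,d_a(x,y)^{\frac{\log b}{\log a}c_{\rho}^{-}}$ for $x,y\in B_{\varepsilon}(w_0)$, and the usual North--South argument --- choosing hyperbolic $h_1,h_2,h_3\in\Gamma$ pulling any pair of points of $\partial_{\infty}X$ into $B_{\varepsilon}(w_0)$ and using that each $h_i$, resp.\ $\rho(h_i)$, is a bi-Lipschitz homeomorphism of $(\partial_{\infty}X,d_a)$, resp.\ $(\partial_{\infty}\mathsf{X}_G,d_b)$ --- extends the bound to all of $\partial_{\infty}X$; so $\xi_{\rho}$ is $\tfrac{\log b}{\log a}c_{\rho}^{-}$-H\"older. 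For the reverse inequality one fixes $\gamma\in\Gamma_{\infty}$ and points $x\ne y$ in $\partial_{\infty}X\smallsetminus\{\gamma^{+},\gamma^{-}\}$; by Lemma \ref{Gromovproduct1} the Gromov products of $\gamma^n x,\gamma^n y$ and of $\xi_{\rho}(\gamma^n x),\xi_{\rho}(\gamma^n y)$ with $\gamma^{-n}x_0$ remain bounded, whence $\tfrac{\log d_b(\xi_{\rho}(\gamma^n x),\xi_{\rho}(\gamma^n y))}{\log d_a(\gamma^n x,\gamma^n y)}\to\tfrac{\log b}{\log a}\cdot\tfrac{|\rho(\gamma)|_{\mathsf{X}_G,\infty}}{|\gamma|_{X,\infty}}$, forcing $\alpha_{\xi_{\rho}}(d_a,d_b)\le\tfrac{\log b}{\log a}c_{\rho}^{-}$; together with the previous estimate this gives $\alpha_{\xi_{\rho}}(d_a,d_b)=\tfrac{\log b}{\log a}c_{\rho}^{-}$ and attainment. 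The statement for $\xi_{\rho}^{-1}$ follows symmetrically from the upper bound $|\rho(\gamma)|_{\mathsf{X}_G}\le c_{\rho}^{+}|\gamma|_X+C_3$ and the same sequences $(\gamma^n)_n$. I expect the main difficulty to be purely bookkeeping rather than conceptual: one must check carefully that $d_b$ restricted to $\partial_{\infty}\mathcal{C}$ is genuinely a visual metric for the convex-hull model space (so that Lemma \ref{triple} and \eqref{visualineq} transfer verbatim), and that the rank-one reduction really does identify $d_{\mathsf{X}_G}(gx_0,x_0)$ and $|g|_{\mathsf{X}_G,\infty}$ with $\log\tfrac{\sigma_1}{\sigma_2}(\tau_{\theta}(g))$ and $\log\tfrac{\ell_1}{\ell_2}(\tau_{\theta}(g))$ up to one multiplicative constant.
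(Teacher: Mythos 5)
Your proposal is correct, but it reaches the crucial two-sided comparison by a genuinely different route from the paper. The paper's proof also starts from convex cocompactness and the orbit quasi-isometry $\gamma x_0\mapsto\rho(\gamma)y_0'$, but then quotes a single estimate from the author's thesis (\cite[Prop. 3.5.4]{thesis}): $c_{\rho}^{-}(\gamma x_0\cdot\delta x_0)_{x_0}-C\leq(\rho(\gamma)y_0'\cdot\rho(\delta)y_0')_{y_0'}\leq c_{\rho}^{+}(\gamma x_0\cdot\delta x_0)_{x_0}+C$ for all $\gamma,\delta\in\Gamma$, which, passed to the boundary and combined with the visual metrics, gives the global bi-H\"older bound $C^{-1}d_a(x,y)^{\frac{\log b}{\log a}c_{\rho}^{+}}\leq d_b(\xi_{\rho}(x),\xi_{\rho}(y))\leq C d_a(x,y)^{\frac{\log b}{\log a}c_{\rho}^{-}}$ in one stroke; optimality of both exponents then comes from the $\gamma^n$-argument via Lemma \ref{Gromovproduct1}, exactly as in your last step. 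You instead manufacture the comparison $c_{\rho}^{-}|\gamma|_X-C\leq|\rho(\gamma)|_{\mathsf{X}_G}\leq c_{\rho}^{+}|\gamma|_X+C$ from Theorem \ref{AMS-KP} applied to $\tau_{\theta}\circ\rho$ together with the rank-one identification of $\alpha(\mu)$ and $\alpha(\lambda)$ with displacement and stable translation length, and convert it into a distance estimate by running Lemma \ref{triple} simultaneously in $X$ and in the model space $\mathcal{C}_{\rho}$ (strictly speaking with $\xi_{\rho}(\mathcal{F})$ as the compact fundamental domain for triples and the same transition element $\gamma$ by equivariance), exploiting that the target metric is itself visual so the estimates are two-sided, and finally globalize by North--South dynamics; the sharpness step is the same as the paper's. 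What your route buys is self-containedness: every ingredient is already proved or stated in the paper, and the two-sided visual estimates replace both the thesis citation and the projective bounds of Lemma \ref{mainlemma}. What it costs: it is longer, it genuinely uses the Zariski density hypothesis (to make $\tau_{\theta}\circ\rho$ irreducible, hence semisimple, so Theorem \ref{AMS-KP} applies), whereas the paper's Gromov-product comparison does not need it at this step, and the bookkeeping you flagged must indeed be carried out (the restriction of $d_b$ to $\partial_{\infty}\mathcal{C}_{\rho}$ is visual for the convex-hull model space only up to adjusting the multiplicative constant and moving the basepoint into $\mathcal{C}_{\rho}$, and the finitely many $\gamma$ for which $\gamma f$ has finite order must be absorbed into the constants); none of these are gaps.
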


\begin{proof} The Anosov representation $\rho:\Gamma \rightarrow G$ is {\em convex cocompact}, i.e. $\rho(\Gamma)$ acts cocompactly on the convex hull $\mathcal{C}_{\rho}\subset \mathsf{X}_G$ of its limit set in $\mathsf{X}_G$ (e.g. see \cite[Thm. 1.8]{GW} and \cite[Thm. 11.1]{Canary}). Fix basepoints $x_0\in X$ and $y_0'\in \mathcal{C}_{\rho}$. The limit map of $\xi_{\rho}$ is identified with the induced boundary map of the quasi-isometry orbit map $(\Gamma x_0,d_X)\rightarrow (\mathcal{C}_{\rho}, d_{\mathsf{X}_G})$, $\gamma x_0\mapsto \rho(\gamma)y_0'$. In other words, if $(\gamma_n)_{n\in \mathbb{N}}\subset \Gamma$ is an infinite sequence converging to a point in $\partial_{\infty}X$, then $\xi_{\rho}(\lim_{n} \gamma_n)=\lim_{n} \rho(\gamma_n)y_0'$. By \cite[Prop. 3.5.4]{thesis}, there is $C_1>1$ such that for every $\gamma, \delta \in \Gamma$ we have \begin{align*} c_{\rho}^{-}(\gamma x_0\cdot \delta x_0)_{x_0}-C_1&\leq \big(\rho(\gamma)y_0'\cdot \rho(\delta)y_0'\big)_{y_0'}\leq c_{\rho}^{+} (\gamma x_0\cdot \delta x_0)_{x_0}+C_1.\end{align*} By the definition of $\xi_{\rho}$ and the metrics $d_b,d_a$, there is $C_2>1$ such that for every $x,y\in \partial_{\infty}X$: \begin{align}\label{rank1ineq}C_2^{-1}d_a(x,y)^{\frac{\log b}{\log a}c_{\rho}^{+}}\leq d_b\big(\xi_{\rho}(x),\xi_{\rho}(y)\big)\leq C_2 d_a(x,y)^{\frac{\log b}{\log a}c_{\rho}^{-}}.\end{align} By Proposition \ref{Gromovproduct1} we have $\alpha_{\xi_{\rho}}(d_a,d_b)\leq \frac{\log b}{\log a}c_{\rho}^{-}$ and $\alpha_{\xi_{\rho}^{-1}}(d_b,d_a)\leq \frac{\log a}{\log b}\frac{1}{c_{\rho}^{+}}$. Then (\ref{rank1ineq}) shows that exponent of the limit maps of $\xi_{\rho}$ and its inverse $\xi_{\rho}^{-1}$ are attained and are equal to $\frac{\log b}{\log a}c_{\rho}^{-}$ and $\frac{\log a}{\log b}\frac{1}{c_{\rho}^{+}}$ respectively.\end{proof}

\section{Anosov limit maps which do not attain their H\"older exponent}\label{1-Holder-proof}

In this section, we prove Theorem \ref{1-Holder} by constructing families of $1$-Anosov representations of surface groups in $\mathsf{SL}_4(\mathbb{R})$ whose Anosov limit maps in $\mathbb{P}(\mathbb{R}^4)$ do not attain their H\"older exponent. 

 \begin{proof}[Proof of Theorem \ref{1-Holder}] Fix a basepoint $x_0\in \mathbb{H}_{\mathbb{R}}^2$, $d_{\mathbb{H}^2}$ the standard Riemannian distance on $\mathbb{H}_{\mathbb{R}}^2$ and the visual metric $$d_\textup{v}(x,y)=e^{-(x\cdot y)_{x_0}},\ x,y\in \partial_{\infty}\mathbb{H}_{\mathbb{R}}^2.$$ We also fix a discrete faithful representation $\rho_1:\pi_1(\Sigma_g)\rightarrow \mathsf{SL}_2(\mathbb{R})$ realizing the hyperbolic plane $(\mathbb{H}_{\mathbb{R}}^2,d_{\mathbb{H}^2})$ as a model space for $\pi_1(\Sigma_g)$, and providing an equivariant identification $\partial_{\infty}\pi_1(\Sigma_g)\cong \partial_{\infty}\mathbb{H}_{\mathbb{R}}^2$.
\medskip

\noindent {\em Construction of the family of representations $\big\{\rho_{s,t}:\pi_1(\Sigma_g)\rightarrow \mathsf{SL}_4(\mathbb{R})\big\}_{(s,t)\in \mathcal{O}}, \mathcal{O}:=\mathbb{R}\times (-\varepsilon,\varepsilon).$}

Fix the following presentation of $\pi_1(\Sigma_g)$: $$\pi_1(\Sigma_g)=\Big \langle a_1,b_1,\ldots, a_g,b_g \ \Big| \ [a_1,b_1]\cdots [a_g,b_g] \Big \rangle.$$ For $t\in \mathbb{R}$ let $J_{t}:=\begin{pmatrix}[0.9]
1&t \\ 
0 & 1
\end{pmatrix}$, fix a matrix $A\in \mathsf{GL}_2(\mathbb{R})$ and define: \begin{align} \label{2-matrices} A_{s,t}:=s\big(A\rho_1(b_1)^{-1}-J_tA\rho_1(a_1^{-1}b_1^{-1})\big)\rho_1\big(a_2b_2a_2^{-1}\big)\big(\rho_1(a_2^{-1})-\textup{I}_2\big)^{-1}. \end{align} 
Clearly $A_{s,t}\in \textup{Mat}_2(\mathbb{R})$ is well defined since the eigenvalues of $\rho_1(a_2^{-1})$ are different from $1$. Moreover, let $\psi_{2,t}:\pi_1(\Sigma_g) \rightarrow \mathsf{SL}_{2}(\mathbb{R})$ be the representation defined on the set of generators $\{a_1,b_1,\ldots,a_g,b_g\}$: $$\psi_{2,t}(a_1)=J_t, \ \psi_{2,t}(b_1)=\psi_{2,t}(a_i)=\psi_{2,t}(b_i)=\textup{I}_2,\ i=2,\ldots,g.$$ Now consider the unique representation $\rho_{s,t}:\pi_1(\Sigma_g)\rightarrow \mathsf{SL}_4(\mathbb{R})$ satisfying: \begin{align*}
\rho_{s,t}(a_i)&=\begin{pmatrix}
\psi_{2,t}(a_i) & \\ 
 & \rho_1(a_i)
\end{pmatrix} \ i=1,2,\ldots, g \\ \rho_{s,t}(b_j)&=\begin{pmatrix}
\psi_{2,t}(b_j) & \\ 
 & \rho_1(b_j)
\end{pmatrix} \ j=3,\ldots,g \ (\textup{if} \ g>2)\\
\rho_{s,t}(b_1)&=\begin{pmatrix}[1]
\psi_{2,t}(b_1) & sA \\ 
 & \rho_1(b_1)
\end{pmatrix}, \ \rho_{s,t}(b_2)=\begin{pmatrix}
\psi_{2,t}(b_2) & A_{s,t}\\ 
 & \rho_1(b_2)
\end{pmatrix}.\end{align*}

A straightforward calculation using (\ref{2-matrices}) shows that \begin{align*}\rho_{s,t}([a_1,b_1][a_2,b_2])&=\begin{pmatrix}[0.9]
\textup{I}_2 & B_{s,t} \\ 
 & \rho_1([a_1,b_1][a_2,b_2])
\end{pmatrix},\\ B_{s,t}:=A_{s,t}(\rho_1(a_2^{-1})-\textup{I}_2)\rho_1(b_2^{-1})+&s\big(J_t A\rho_1(a_1^{-1}b_1^{-1})-A\rho_1(b_1^{-1})\big)\rho_1(a_2b_2a_2^{-1}b_2^{-1})=\begin{pmatrix} 0  & 0 \\ 0 & 0 \end{pmatrix}\end{align*} Therefore, $\rho_{s,t}([a_1,b_1]\cdots [a_g,b_g])=\textup{I}_4$ and $\rho_{s,t}$ is a well defined representation of $\pi_1(\Sigma_g)$.

\medskip
\noindent {\em Claim 1. The representation $\rho_{s,t}:\pi_1(\Sigma_g)\rightarrow \mathsf{SL}_4(\mathbb{R})$ is $1$-Anosov. There exists $\varepsilon>0$ such that the Anosov limit map $\xi_{s,t}^1:\partial_{\infty}\mathbb{H}_{\mathbb{R}}^2\rightarrow \mathbb{P}(\mathbb{R}^4)$ is spanning for every $|t|<\varepsilon$ and $s\neq 0$.}

Since the image of $\rho_{2,t}$ is unipotent, the moduli of the eigenvalues of \hbox{$\rho_{s,t}(\gamma)$ are} $$\ell_1(\rho_1(\gamma)),1,1, \ell_1(\rho_1(\gamma))^{-1}$$ and hence $\rho$ is $1$-Anosov by Theorem \ref{Anosov} (iii).

For every $s,t\neq 0$ the vector space $V_{s,t}\subset \mathbb{R}^4$ spanned by $\xi_{s,t}^1(\partial_{\infty}\mathbb{H}_{\mathbb{R}}^2)$ contains $\mathbb{R}e_3\oplus \mathbb{R}e_4$, since the restriction of $\xi_{s,t}^1$ on the limit set of the free group $\langle \rho_1(a_1),\rho_1(a_2)\rangle$ in $\partial_{\infty}\mathbb{H}_{\mathbb{R}}^2$ coincides with the limit map of $\rho_1$. 
Now let $v_{b_1}, v_{a_1b_1}\in \mathbb{R}e_1\oplus \mathbb{R}e_2$ be eigenvectors of $\rho_1(b_1), \rho_1(a_1b_1)$ (see as matrices in $\mathsf{SL}(\mathbb{R}e_1\oplus \mathbb{R}e_2)$ via the isomorphism $e_3\mapsto e_1, e_4\mapsto e_2$) respectively, corresponding to their eigenvalues of maximum modulus (denoted by $\lambda_{b_1},\lambda_{a_1b_1}\neq 1$ respectively). The matrix $\rho_{s,t}(b_1)$ has an eigenvector with respect to its eigenvalue of maximum modulus of the form $u_{b_1}(s,t)+\overline{u}_{b_1}(s,t)$, where $u_{b_1}(s,t)\in \mathbb{R}e_1\oplus \mathbb{R}e_2$, $$u_{b_1}(s,t)=\frac{s}{\lambda_{b_1}-1}Av_{b_1}$$ and $\overline{u}_{b_1}(s,t)\in \mathbb{R}e_3\oplus \mathbb{R}e_4$. Similarly, the matrix $$\rho_{s,t}(a_1b_1)=\begin{pmatrix}
J_t & sJ_tA\\ 
 & \rho_1(a_1b_1)
\end{pmatrix}$$ has an eigenvector of the form $u_{a_1b_1}(s,t)+\overline{u}_{a_1b_1}(s,t)$, $\overline{u}_{a_1b_1}(s,t)\in \mathbb{R}e_3\oplus \mathbb{R}e_4$, $$u_{a_1b_1}(s,t)=s(\lambda_{a_1b_1}\textup{I}_2-J_t)^{-1}J_tAv_{a_1b_1}.$$ 

For every $s\neq 0$ we have that \begin{align*}\lim_{t\rightarrow 0}d_{\mathbb{P}}(u_{a_1b_1}(s,t),u_{b_1}(s,t))&=\lim_{t\rightarrow 0}d_{\mathbb{P}}\big((\lambda_{a_1b_1}\textup{I}_2-J_t)^{-1}[J_tAv_{a_1b_1}],[Av_{b_1}]\big)\\ &=d_{\mathbb{P}}([Av_{b_1}], [Av_{a_1b_1}]\big)>0,\end{align*} hence, we may choose $\varepsilon>0$ such that $\mathbb{R} u_{b_1}(s,t)\oplus \mathbb{R} u_{a_1b_1}(s,t)=\mathbb{R}e_1 \oplus \mathbb{R}e_2$, for $|t|<\varepsilon$ and $s\neq 0$. Since $\mathbb{R} e_3\oplus \mathbb{R}e_4 \subset V_{s,t}$ we have $V_{s,t}=\mathbb{R}^4$ for $s\neq 0$, $|t|<\varepsilon$ and $\xi_{s,t}^1$ is spanning.

By the definition of $d_{\mathbb{H}^2}$, there exists $R>0$, \hbox{depending only on the choice of $x_0\in \mathbb{H}_{\mathbb{R}}^2$, such that} $$\sup_{\gamma \in \pi_1(\Sigma_g)}\big||\rho_1(\gamma)|_{\mathbb{H}_{\mathbb{R}}^2}-2\log \sigma_1(\rho_1(\gamma))\big|\leq R$$ where $|\rho_1(\gamma)|_{\mathbb{H}_{\mathbb{R}}^2}:=d_{\mathbb{H}^2}(\rho_1(\gamma)x_0,x_0), \gamma \in \pi_1(\Sigma_g)$.
\medskip

\noindent {\em Claim 2. If $|t|<\varepsilon$ and $s,t\neq 0$, $\alpha_{\xi_{s,t}^1}(d_\textup{v},d_{\mathbb{P}})=\frac{1}{2}$ and $\xi_{s,t}^1:(\partial_{\infty}\mathbb{H}_{\mathbb{R}}^2,d_\textup{v})\rightarrow (\mathbb{P}(\mathbb{K}^d),d_{\mathbb{P}})$ is not $\frac{1}{2}$-H\"older.}

Observe that since $\rho_{s,t}$ is $1$-Anosov with semisimplification $\rho_{s,t}^{ss}=\begin{pmatrix}[0.7] \textup{I}_2 & \\ & \rho_1\end{pmatrix}$, there is \hbox{$R_{s,t}>0$} with the property that \begin{align*} \sigma_1(\rho_{s,t}(\gamma)) \leq R_{s,t} \sigma_1(\rho_{1}(\gamma)) \leq R_{s,t} e^{\frac{R}{2}+\frac{1}{2}|\rho_1(\gamma)|_{\mathbb{H}_{\mathbb{R}}^2}} \ \ \forall \ \gamma \in \pi_1(\Sigma_g).\end{align*} Since for every $\gamma\in \pi_1(\Sigma_g)$ we have $|\gamma|_{\mathbb{H}^2,\infty}=2\log \ell_1(\rho_1(\gamma))$ and $\xi_{s,t}^1$ is spanning, Theorem \ref{exponent} implies that $\alpha_{\xi_{s,t}^1}(d_\textup{v},d_{\mathbb{P}})=\frac{1}{2}$ for $0<|t|<\varepsilon$ and $s\neq 0$. 

\par Now it remains to prove that $\xi_{s,t}^1$ cannot be $\frac{1}{2}$-H\"older. By working as in Lemma \ref{doublebound} for the spanning map $\xi_{s,t}^1$, we may find $x,y\in \partial_{\infty}\mathbb{H}_{\mathbb{R}}^2$, $x,y\neq \lim_n \rho_1(a_1^{-n})x_0$, and $\epsilon>0$ \hbox{such that for every $n\in \mathbb{N}$} \begin{align*}\label{1-Holder'} d_{\mathbb{P}}\big(\xi_{s,t}^1(a_1^n x),\xi_{s,t}^1(a_1^n y)\big)&\geq \epsilon \frac{\sigma_2(\rho_{s,t}(a_1^n))}{\sigma_1(\rho_{s,t}(a_1^n))}\geq \epsilon\frac{\sigma_2(\rho_{s,t}(a_1^n))}{R_{s,t}e^{R/2}}e^{-\frac{1}{2}|\rho_1(\gamma)|_{\mathbb{H}_{\mathbb{R}}^2}}.\end{align*}

By Lemma \ref{Gromovproduct1}, since $x,y\neq \lim_{n}\rho_1(a_1^{-n})x_0$, there exists $C>1$, depending only on the choice of $x,y\in \partial_{\infty}\mathbb{H}_{\mathbb{R}}^2$, such that for every $n\in \mathbb{N}$: $$d_\textup{v}\big(\rho_1(a_1^n)x,\rho_1(a_1^n)y\big)\leq C e^{-|\rho_1(\gamma)|_{\mathbb{H}_{\mathbb{R}}^2}}.$$ Since $\sigma_2(\rho_{s,t}(a_1^n))=\sigma_1(\psi_{2,t}(a_1^n))=\Big|\Big| \begin{pmatrix}[0.5]
1 & tn \\ 
0 & 1
\end{pmatrix}\Big|\Big|\geq |t|n$, we conclude for every $n\in \mathbb{N}$, $$ \frac{d_{\mathbb{P}}\big(\xi_{s,t}^1(a_1^n x),\xi_{s,t}^1(a_1^n y)\big)}{\sqrt{d_\textup{v}(a_1^nx,a_1^ny)}}\geq \frac{\epsilon \sigma_2(\rho_{s,t}(a_1^n))}{\sqrt{C} R_{s,t} e^{R/2}}\geq \frac{\epsilon |t|n}{\sqrt{C} R_{s,t} e^{R/2}}$$ hence $$\sup_{z\neq w}\frac{d_{\mathbb{P}}\big(\xi_{s,t}^1(z),\xi_{s,t}^1(w)\big)}{\sqrt{d_\textup{v}(z,w)}}=+\infty.$$ Therefore, $\xi_{s,t}^1$ cannot be $\frac{1}{2}$-H\"older. The proof of the claim is complete.
\end{proof}

\begin{rmks}\normalfont{ \noindent \textup{(i)} After the construction of the examples of Theorem \ref{1-Holder}, Fran\c{c}ois Gu\'eritaud mentioned to me that he is aware of an example of a surface group $1$-Anosov representation into $\mathsf{SL}_4(\mathbb{R})$ whose Anosov limit map in $\mathbb{P}(\mathbb{R}^4)$ has H\"older exponent $1$ but fails to be $1$-H\"older.

\medskip
\noindent \textup{(ii)} Note that the previous family of representations $\{\rho_{s,t}\}_{s,t}, (s,t)\in \mathbb{R}\times (-\varepsilon, \varepsilon)$, demonstrates the failure of the continuity of the map $\mathcal{H}:\textup{Anosov}_{1,d}(\pi_1(\Sigma_g))/\mathsf{GL}_d(\mathbb{K}) \rightarrow (0,\infty)$, $$\mathcal{H}([\rho])=\alpha_{\xi_{\rho}^1}(d_\textup{v},d_{\mathbb{P}})$$ among conjugacy classes of $1$-Anosov representations of $\pi_1(\Sigma_g)$ in $\mathsf{GL}_d(\mathbb{K})$. While $\xi_{s,t}^1$ is $\frac{1}{2}$-H\"older for $s\neq 0$, the Anosov limit map of $\lim_{s,t\rightarrow 0}\rho_{s,t}=\begin{pmatrix}[0.7] \textup{I}_2 & \\ & \rho_1\end{pmatrix}$ coincides with the limit map of $\rho_1$ whose H\"older exponent is equal to $1$.}

\end{rmks}

\bibliographystyle{siam}

\bibliography{biblio.bib}

\end{document}